\newtheorem{lma}{Lemma}[section]
\newaliascnt{thmCt}{lma}
\newtheorem{thm}[thmCt]{Theorem}
\newaliascnt{corCt}{lma}
\newtheorem{cor}[corCt]{Corollary}
\newaliascnt{propCt}{lma}
\newtheorem{prop}[propCt]{Proposition}
\newtheorem*{thm*}{Theorem}
\newtheorem*{cor*}{Corollary}
\newtheorem*{prop*}{Proposition}
\theoremstyle{definition}
\newaliascnt{prgCt}{lma}
\newtheorem{prg}[prgCt]{}
\newaliascnt{dfnCt}{lma}
\newtheorem{dfn}[dfnCt]{Definition}
\newaliascnt{rmkCt}{lma}
\newtheorem{rmk}[rmkCt]{Remark}
\newaliascnt{rmksCt}{lma}
\newaliascnt{ntnCt}{lma}
\newtheorem{ntn}[ntnCt]{Notation}
\newaliascnt{ntnsCt}{lma}
\newaliascnt{qstCt}{lma}
\newtheorem{qst}[qstCt]{Question}
\newaliascnt{prblCt}{lma}
\newaliascnt{exaCt}{lma}
\newtheorem{exa}[exaCt]{Example}
\newcommand{\C}{\mathbb{C}}
\newcommand{\N}{\mathbb{N}}
\newcommand{\cc}{\mathfrak{c}}
\newcommand{\dd}{\mathfrak{d}}
\DeclareMathOperator{\Th}{Th}
\DeclareMathOperator{\diam}{diam}
\newcommand{\CatCa}{\mathrm{C}^*}
\DeclareMathOperator{\Cu}{Cu}
\DeclareMathOperator{\CW}{CW}
\DeclareMathOperator{\AF}{AF}
\DeclareMathOperator{\Lsc}{Lsc}
\DeclareMathOperator{\Hom}{Hom}
\DeclareMathOperator{\id}{id}
\newcommand{\Cus}{\Cu^*}
\DeclareMathAlphabet{\mymathbb}{U}{bbold}{m}{n}
\begin{document}
\onehalfspacing
\title{Fra\"{i}ss\'{e} theory for Cuntz semigroups}

\author{Laurent Cantier}
\address{Laurent Cantier,\newline
Departament de Matem\`{a}tiques \\
Universitat Aut\`{o}noma de Barcelona \\
08193 Bellaterra, Spain\newline
Institute of Mathematics\\ Czech Academy of Sciences\\ Zitna 25\\ 115 67 Praha 1\\ Czechia}
\email[]{laurent.cantier@uab.cat}
\urladdr{www.laurentcantier.fr}

\author{Eduard Vilalta}
\address{Eduard Vilalta\newline
Fields Institute for Research in Mathematical Sciences\\
M5T 3J1 Toronto, Canada}
\email[]{evilalta@fields.utoronto.ca}
\urladdr{www.eduardvilalta.com}

\thanks{Both authors were partially supported by MINECO (grant No. PID2020-113047GB-I00), and by the Departament de Recerca i Universitats de la Generalitat de Catalunya (grant No. 2021-SGR-01015). The first author was also supported by the Spanish Ministry of Universities and the European Union-NextGenerationEU through a Margarita Salas grant. The second author was also supported by MINECO grant No. PRE2018-083419 and by the Fields Institute for Research in Mathematical Sciences.}

\keywords{Fra\"{i}ss\'{e} Theory, Cuntz semigroup, Cauchy sequences, $\Cu$-distance}

\begin{abstract} 
We introduce a Fra\"{i}ss\'{e} theory for abstract Cuntz semigroups akin to the theory of Fra\"{i}ss\'{e} categories developed by Kubi{\'s}. In particular, we show that any (Cuntz) Fra\"{i}ss\'{e} category has a unique Fra\"{i}ss\'{e} limit which is both universal and homogeneous. We also give several examples of such categories and compute their Fra\"{i}ss\'{e} limits. During our investigations, we develop a general theory of Cauchy sequences and intertwinings in the category $\Cu$.
\end{abstract}

\maketitle

\section{Introduction}

Fra\"{i}ss\'{e} Theory was introduced in \cite{R54} by Fra\"{i}ss\'{e} in the context of model theory with the intent of giving a generic method to construct countable homogeneous structures from their finitely-generated substructures. Since then, several adaptations of this method have appeared. These include, but are not limited to, projective Fra\"{i}ss\'{e} theory (\cite{IS06}), Fra\"{i}ss\'{e} theory for metric structures (\cite{Y15}) and, of late, Fra\"{i}ss\'{e} categories (\cite{GK20,K13}). The overall idea is to build a \textquoteleft large object\textquoteright, called the \emph{Fra\"{i}ss\'{e} limit}, which is unique, universal, and homogeneous with respect to a class of \textquoteleft small sub-objects\textquoteright, known as the \emph{Fra\"{i}ss\'{e} class}.
All these variations of Fra\"{i}ss\'{e} theory have produced interesting results in their corresponding areas. For example, well-known topological spaces such as the Cantor set, the pseudo-arc, the Lelek fan or the Menger curve have been expressed as (projective) Fra\"{i}ss\'{e} limits. (See \cite{BK22,BK15,IS06,K13}.) Similarly, as shown in \cite{K13}, the Gurari\u\i\ space is the Fra\"{i}ss\'{e} limit of the class of finite dimensional Banach spaces. In graph theory, the countable random graph also arises as the Fra\"{i}ss\'{e} limit of the class of finite graphs and, in model theory, Ehrenfeucht-Fra\"{i}ss\'{e}-like games can be built using Fra\"{i}ss\'{e} classes. 

Recently, Fra\"{i}ss\'{e} theory has been applied to the field of $\CatCa$-algebras (for example, in \cite{EFHKKM16,JV22,M17,Vig22}) and, in that setting, many well known $\CatCa$-algebras have been constructed as Fra\"{i}ss\'{e} limits. As proof, the Jiang-Su algebra $\mathcal{Z}$ ---introduced in \cite{JS99}, and which plays a central role in the classification of simple $\CatCa$-algebras--- can be seen as a Fra\"{i}ss\'{e} limit (\cite{M17}). Further, as noted in \cite{G21}, this construction can be used to (re)prove in simpler ways some of the properties of the algebra, such as its strong self-absorption. Among other examples, several stably projectionless $\CatCa$-algebras were also built as Fra\"{i}ss\'{e} limits in \cite{JV22}, and the existence of a universal AF-algebra was proved in \cite{GK20}.

In light of the recent bridge between Fra\"{i}ss\'{e} theory and $\CatCa$-algebras, it seems appropriate to explore Fra\"{i}ss\'{e} categories of (abstract) Cuntz semigroups. First introduced by Cuntz in \cite{C78}, the \emph{Cuntz semigroup} is a powerful invariant for $\CatCa$-algebras that codifies how positive elements are compared. In \cite{CEI08}, Coward, Elliott and Ivanescu introduced the category $\Cu$ of \emph{abstract Cuntz semigroups}, or \emph{$\Cu$-semigroups} for short. This rich subcategory of positively ordered monoids has been studied extensively (see, among many others, \cite{APRT21,APT20, APT20b, C21a, C21b, CRS10}) and this study has resulted in new results for $\CatCa$-algebras, such as the ones obtained in \cite{APRT22,C23b,C23a,ERS11,R12,TV22, TV23,T08,V23}. Further, this abstract category allows one to view the Cuntz semigroup as a continuous functor from the category of $\CatCa$-algebras to $\Cu$. The aim of this paper is to introduce a Fra\"{i}ss\'{e} theory in the category $\Cu$, while also giving examples and studying its relations to its $\CatCa$-algebraic counterpart.

The first obstruction that one finds when trying to mimic the past approaches is the general lack of a non-trivial,  enriched distance in the Hom-sets of $\Cu$. Although such a distance does exist for specific $\Cu$-semigroups (and has been exploited succesfully in a number of situations; see \cite{C23b,C23a,CE08,R12,RS10}), this approach is still too restrictive for our purposes. Instead, we will compare morphisms in $\Cu$ by using \emph{finite-set comparison}, an idea that had appeared implicity in the past (e.g. \cite{CE08,RS10}) but that was first given a name in \cite{C22}. 

Using this notion of comparison, we introduce \emph{Cauchy sequences} of morphisms and develop a general theory of \emph{one-} and \emph{two-sided intertwinings} in $\Cu$, which generalizes the results from \cite{C22}. We show that any Cauchy sequence has a unique limit, and that each intertwining induces a morphism with the expected properties; see Theorems \ref{thm:UniLimComp} and \ref{thm:2SidIntCu}. With these tools at our disposal, we are able to obtain the main result of the paper:

\begin{thm*}[{\ref{prp:FraisseLimCu}}]
Let $\cc$ be a Fra\"{i}ss\'{e} category. Then any Fra\"{i}ss\'{e} sequence $(S_i,\sigma_{i,j})_i$ has a $\overline{\cc}$-limit $(S,\sigma_{i,\infty})_i$ such that
\begin{itemize}
 \item[(i)] $S$ is unique up to isomorphism, that is, $S$ does not depend on the Fra\"{i}ss\'{e} sequence chosen.
 \item[(ii)] The set $\Hom_{\overline{\cc}}(D,S)$ is nonempty whenever $D$ is countably-based.
\end{itemize}

Further, assume that $\cc$ is contained in a category $\dd$ where every $\cc$-inductive sequence has a limit, and where every $\dd$-object is the limit of a $\cc$-sequence. If $\cc\subseteq\dd$ satisfies the almost factorization property, then
\begin{itemize}
 \item[(iii)] For any $C\in\cc$, any $\alpha,\beta\in \Hom_{\dd}(C,S)$ and any finite set $F\subseteq C$, there exists a $\dd$-isomorphism $\eta_F\colon S\overset{\cong}{\longrightarrow} S$ such that $\eta_F\circ\alpha \simeq_F\beta$.
\end{itemize}
\end{thm*}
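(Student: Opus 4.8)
The plan is to derive all three parts from the theory of Cauchy sequences and intertwinings established above, obtaining existence, uniqueness (i) and universality (ii) along the classical Fra\"{i}ss\'{e} pattern, and reserving the bulk of the work for the approximate homogeneity statement (iii).

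For existence of the limit and for (i), note first that a Fra\"{i}ss\'{e} sequence is in particular a Cauchy sequence, so Theorem \ref{thm:UniLimComp} provides a limit $(S,\sigma_{i,\infty})_i$ in $\overline{\cc}$. For uniqueness, take a second Fra\"{i}ss\'{e} sequence $(S_i',\sigma_{i,j}')_i$ with limit $S'$, and build a two-sided intertwining between the two sequences. Applying the defining absorption/extension property of a Fra\"{i}ss\'{e} sequence alternately on each side, one produces cofinal reindexings together with morphisms $S_a\to S_b'$ and $S_c'\to S_d$ that assemble into a ladder diagram commuting up to the prescribed finite-set tolerances. Theorem \ref{thm:2SidIntCu} then converts this intertwining into a $\overline{\cc}$-isomorphism $S\cong S'$, so $S$ does not depend on the chosen sequence.

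For (ii), let $D$ be countably-based. Being countably-based, $D$ is the limit of a Cauchy sequence $(D_n)$ of objects of $\cc$. Using cofinality of the Fra\"{i}ss\'{e} sequence, each $D_n$ admits a morphism into some stage $S_{k_n}$; feeding these maps into the absorption property, one adjusts indices and maps so that the resulting family becomes compatible, up to finite-set tolerance, with both the connecting maps of $(D_n)$ and those of $(S_i)$. This is exactly a one-sided intertwining of $(D_n)$ into $(S_i)$, whose induced $\overline{\cc}$-morphism $D\to S$ witnesses $\Hom_{\overline{\cc}}(D,S)\neq\emptyset$.

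Part (iii) is where I expect the main obstacle, and I would establish it by a back-and-forth argument controlled by the finite set $F$. Since $C\in\cc$ and $\alpha,\beta\in\Hom_{\dd}(C,S)$ with $S=\varinjlim S_i$ the limit of a $\cc$-sequence, the almost factorization property lets me approximately factor both maps through a common finite stage: there are morphisms $\alpha_0,\beta_0\colon C\to S_m$ with $\sigma_{m,\infty}\circ\alpha_0\simeq_F\alpha$ and $\sigma_{m,\infty}\circ\beta_0\simeq_F\beta$. I then implement the homogeneity of the Fra\"{i}ss\'{e} sequence---that two $\cc$-maps of $C$ into the limit are automorphically equivalent---by a finite-set-controlled back-and-forth: alternately invoking the absorption property to correct, at each step, the image under the maps built so far of the relevant transported finite set, I obtain commuting-up-to-tolerance morphisms that interleave into a two-sided self-intertwining of $(S_i)$ whose base rung carries $\alpha_0$ to $\beta_0$ on $F$. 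Theorem \ref{thm:2SidIntCu} yields a $\overline{\cc}$-isomorphism $\eta_F\colon S\to S$, and composing the two factorization approximations with the base rung gives $\eta_F\circ\alpha\simeq_F\beta$. The two delicate points are: the approximation parameters governing the successive corrections must be chosen to decrease quickly enough that the back-and-forth converges to a genuine two-sided intertwining in the sense demanded by Theorem \ref{thm:2SidIntCu}; and, since $\alpha_0,\beta_0$ recover $\alpha,\beta$ only up to $\simeq_F$, one must track precisely how $F$ and its images under the successive maps control the relevant stages, ensuring that the chain of approximate identities composes without the accumulated error exceeding the tolerance permitted on $F$.
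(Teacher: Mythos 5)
Your proposal follows essentially the same route as the paper: uniqueness via a back-and-forth two-sided intertwining between any two Fra\"{i}ss\'{e} sequences combined with \autoref{thm:2SidIntCu}, universality via a one-sided intertwining of a presenting $\cc$-sequence of $D$ into the Fra\"{i}ss\'{e} sequence combined with \autoref{prop:induced}, and homogeneity via almost factorization through a finite stage followed by a finite-set-controlled self-intertwining --- exactly the content the paper packages as \autoref{thm:existenceuniqueness}, \autoref{lma:universality} and \autoref{lma:homogeneity}, with the refinement bookkeeping you flag handled there by $3$- and $4$-refinements of $F$. One small correction: the existence of the limit $(S,\sigma_{i,\infty})_i$ does not come from \autoref{thm:UniLimComp} (a Fra\"{i}ss\'{e} sequence is an inductive system of objects, not a Cauchy sequence of morphisms in a fixed $\Hom$-set); it is immediate from the definition of $\overline{\cc}$, since $\Cu$ has inductive limits.
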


In Part \autoref{subsec:ClassMorCat} of \autoref{sec:FraisseCu} we study the relations between this theorem and the Fra\"{i}ss\'{e} theory of $\CatCa$-algebras, while in \autoref{sec:examples} we provide a number of examples, listed below:
\begin{itemize}
 \item[(1)] Any dimensional $\Cu$-semigroup $S_p$ of infinite type is the Fra\"{i}ss\'{e} limit of the Fra\"{i}ss\'{e} category $\mathfrak{s}_{p}$.
 \item[(2)] There exists a universal dimensional $\Cu$-semigroup $\mathcal{S}$ which is the Fra\"{i}ss\'{e} limit of the Fra\"{i}ss\'{e} category $\mathfrak{s}_{\rm dim}$.
 \item[(3)] There exist countably many Fra\"{i}ss\'{e} categories $\mathfrak{e}_n$ whose Fra\"{i}ss\'{e} limits are simple, non-stably finite, not weakly purely infinite $\Cu$-semigroups.
 \item[(4)] The $\Cu$-semigroup $\Lsc(2^\N,\overline{\N})$, where $2^\N$ denotes the Cantor set, is the Fra\"{i}ss\'{e} limit of the Fra\"{i}ss\'{e} category $\mathcal{K}_{2^\N}$.
 \item[(5)] The $\Cu$-semigroup $\Lsc(\mathbb{P},\overline{\N})$, where $\mathbb{P}$ denotes the pseudo-arc, is the Fra\"{i}ss\'{e} limit of the Fra\"{i}ss\'{e} category $\mathcal{K}_{\mathbb{P}}$.
\item[(6)] The Cuntz semigroup of the Jiang-Su algebra is the the Fra\"{i}ss\'{e} limit of the Fra\"{i}ss\'{e} category $\mathcal{K}_Z$.
\end{itemize}

Some of the notions in this paper and in \cite{TV21} seem to hint at the right direction to develop a model theory of $\Cu$-semigroups, similar to the model theory of $\CatCa$-algebras from \cite{FHLRTVW21,FHS14}. We do not pursue this here, since this is an ellaborate task that will be done elsewhere.


\textbf{Organization of the paper.} \textbf{\autoref{sec:FraisseInter}} recalls the categorical Fra\"{i}ss\'{e} theory developed in \cite{K13}, where the reader can find the definition of a Fra\"{i}ss\'{e} category, a Fra\"{i}ss\'{e} sequence, and the fact that that any Fra\"{i}ss\'{e} category admits a unique Fra\"{i}ss\'{e} sequence, up to two-sided approximate intertwining.

We introduce \emph{(Cuntz) Fra\"{i}ss\'{e} categories} and their \emph{limit} in \textbf{\autoref{sec:FraisseCu}}. We start by recalling some preliminaries about the category $\Cu$ and the notion of \emph{finite-set comparison} for $\Cu$-morphisms (\autoref{dfn:CompMorph}). We show that Cauchy sequences with respect to finite-set comparison have a unique \emph{limit} (\autoref{thm:UniLimComp}), and we study approximate intertwinings in the category $\Cu$ (\autoref{thm:2SidIntCu}). Then, we define the \textquoteleft Cuntz analogue\textquoteright\ of a Fra\"{i}ss\'{e} category (\autoref{dfn:cufraissedfn}), a Fra\"{i}ss\'{e} sequence (\autoref{dfn:cufraisseseqdfn}), and the fact that that any Fra\"{i}ss\'{e} category admits a unique Fra\"{i}ss\'{e} sequence, up to two-sided approximate intertwining (\autoref{thm:existenceuniqueness}). Finally, a characterization of Fra\"{i}ss\'{e} limits is given (\autoref{prp:FraisseLimCu}). We finish the section by exploring the relations between Fra\"{i}ss\'{e} categories of $\CatCa$-algebras and Fra\"{i}ss\'{e} categories of Cuntz semigroups. (Part \autoref{subsec:ClassMorCat}.) 

\textbf{\autoref{sec:examples}} is divided in Parts \autoref{subsec:A}-\autoref{subsec:E}, which deal with the aforementioned examples. In \textbf{\autoref{sec:CuMetrics}} we define the \emph{Thomsen semigroup of a $\Cu$-semigroup} (\autoref{prg:thmsencu}) by using the generator $\mathbb{G}$ of the category $\Cu$. This allows us to define metrics on any $\rm Hom_{\Cu}$-set (\autoref{dfn:distCu}). We then explore the properties and several examples of such metrics (Examples \ref{exa:distTriv}-\ref{exa:UniBasis} and \autoref{rmk:LambdaFin}).


\textbf{Acknowledgments.} This research started when the second author visited the first author at the Czech Academy of Sciences. They are both grateful to the IMCAS for its hospitality and for providing a great working environment. The first author would also like to thank W. Kubi\'s for introducing him to Fra\"{i}ss\'{e} Theory.

\section{Preliminaries on Fra\"{i}ss\'{e} categories}
\label{sec:FraisseInter}
The aim of this section is to gather the main definitions and results about \emph{Fra\"{i}ss\'{e} categories}. These differ slightly, but include, the model-theoretical tools originally developed in \cite{R54}. The categorical approach described below has been developed by Kubi\'{s}, and we follow his notation and definitions from \cite{K13}. (See also \cite{BK22} and \cite{GK20}.) 

As mentioned in the introduction, the core idea of Fra\"{i}ss\'{e} theory is to produce \textquoteleft large\textquoteright\ objects that are universal and homogeneous in a generic sense ---these objects will be called \emph{Fra\"{i}ss\'{e} limits}--- for a given category of \textquoteleft small\textquoteright\ objects ---these categories will be called \emph{Fra\"{i}ss\'{e} categories}---. 

\begin{dfn}\label{dfn:MetEnr}
Let $\cc$ be a small category. We say that $\mathfrak{c}$ is \emph{metric-enriched} if 

\begin{itemize}
 \item[(i)] For any $A,B\in \cc$, the set $\Hom_\cc(A,B)$ is equipped with a metric $d_{(A,B)}$.
 \item[(ii)] For any $\alpha_1,\alpha_2\in \Hom_\cc(A,B)$ and $\beta\in \Hom_\cc(B,C)$, we have 
 \[
  d_{(A,C)}(\beta\circ\alpha_1,\beta\circ\alpha_2)\leq d_{(A,B)}(\alpha_1,\alpha_2).
 \]
 \item[(iii)] For any $\alpha\in \Hom_\cc(A,B)$ and $\beta_1,\beta_2\in \Hom_\cc(B,C)$, we have 
 \[
  d_{(A,C)}(\beta_1\circ\alpha,\beta_2\circ\alpha)\leq d_{(B,C)}(\beta_1,\beta_2).
 \]
\end{itemize}
Whenever the context is clear, we write $d_\cc$ instead of $d_{(A,B)}$.
\end{dfn}

\begin{dfn}\label{dfn:FraisseMetric}
Let $\cc$ be a metric-enriched category. We say that $\cc$
\begin{itemize}
 \item[(JEP)] satisfies the \emph{joint embedding property} if, for any $A_1,A_2\in \cc$, there exists $B\in \cc$ such that both $\Hom_\cc(A_1,B)$ and $\Hom_\cc(A_2,B)$ are nonempty.
 \item[(NAP)] satisfies the \emph{near amalgamation property} if, for any $\varepsilon>0$, and any $\cc$-morphisms $\alpha_1\in \Hom_\cc(A,B_1)$ and $\alpha_2\in \Hom_\cc(A,B_2)$, there exist $C\in \cc$ and $\cc$-morphisms $\beta_1\in \Hom_\cc(B_1,C)$ and $\beta_2\in \Hom_\cc(B_2,C)$ such that $d_\cc(\beta_1\circ\alpha_1,\beta_2\circ\alpha_2)<\varepsilon$.
 \item[(SEP)] is \emph{separable} if there exists a countable dominating subcategory $\mathfrak{s}\subseteq \cc$, that is,
 \begin{itemize}
  \item[$\bullet$] the set of $\mathfrak{s}$-morphisms is countable. (A fortiori, so is the set of $\mathfrak{s}$-objects.)
  \item[$\bullet$] for any $A\in \cc$ there exists $S\in \mathfrak{s}$ such that $\Hom_\cc(A,S)$ is nonempty.
  \item[$\bullet$] for any $\varepsilon>0$ and any $\cc$-morphism $\sigma\colon S\longrightarrow A$ with $S\in \mathfrak{s}$, there exist $T\in\mathfrak{s}$ and $\alpha\in\Hom_\mathfrak{c}(A,T)$ and $\nu\in\Hom_\mathfrak{s}(S,T)$ such that $d_\cc(\alpha\circ \sigma,\nu)<\varepsilon$.
 \end{itemize}
\end{itemize}

We say that $\cc$ is a \emph{Fra\"{i}ss\'{e} category} if $\cc$ satisfies (JEP), (NAP) and (SEP).
\end{dfn}

\begin{dfn}\label{dfn:CuFraisseSeq}
Let $\cc$ be a metric-enriched category. An inductive sequence $(F_i,\sigma_{i,j})_{i\in\N}$ is called a \emph{Fra\"{i}ss\'{e} sequence} if, for any $\varepsilon>0$ and any $\cc$-morphism $\gamma_i\colon F_i\longrightarrow C$, there exists 	a $\cc$-morphism $\gamma_j\colon C\longrightarrow F_j$ for some $j\geq i$ such that $d_\cc(\sigma_{i,j},\gamma_j \circ\gamma_i)<\varepsilon$.
\end{dfn}

\begin{thm}[cf. {\cite{K13}}]
\label{prp:FraisseCathasSeq}
Let $\cc$ be a Fra\"{i}ss\'{e} category. Then $\cc$ admits a Fra\"{i}ss\'{e} sequence which is unique up to two-sided approximate intertwining. 
\end{thm}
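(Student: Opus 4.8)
The plan is to adapt Kubi\'s's categorical argument to the present metric-enriched setting, treating existence and uniqueness separately. For existence, the first step I would take is to cut the family of absorption tasks down to a countable one. Fixing a countable dominating subcategory $\mathfrak{s}$ from (SEP), I would show that a sequence is Fra\"{i}ss\'{e} as soon as it absorbs every task $(\nu,\delta)$ with $\nu\in\Hom_\mathfrak{s}(F_i,T)$, $T\in\mathfrak{s}$ and $\delta$ rational. Indeed, given a general $\gamma_i\colon F_i\to C$ and $\varepsilon>0$, the back-pushing clause of (SEP) replaces $\gamma_i$ by an $\mathfrak{s}$-morphism $\nu\colon F_i\to T$ with $d_\cc(\alpha\circ\gamma_i,\nu)$ small for some $\alpha\colon C\to T$; absorbing $\nu$ by a morphism $g\colon T\to F_j$ then absorbs $\gamma_i$ via $g\circ\alpha$, since the contractivity axioms of \autoref{dfn:MetEnr} control the error. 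As $\mathfrak{s}$ has countably many morphisms, only countably many such tasks arise.

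Next I would build the sequence by recursion entirely inside $\mathfrak{s}$, scheduling tasks by a fixed bookkeeping surjection so that each task $(\nu\colon F_k\to T,\delta)$ is scheduled at some stage after $F_k$ has appeared, while every task is eventually scheduled. To pass from $F_i$ to $F_{i+1}$ discharging a scheduled task, I would apply (NAP) to $\sigma_{k,i}$ and $\nu$ to amalgamate $F_i$ and $T$ over $F_k$ into some $D\in\cc$, and then apply the back-pushing clause of (SEP) to the leg $F_i\to D$ to land back in an $\mathfrak{s}$-object $F_{i+1}$, thereby defining $\sigma_{i,i+1}$. Composing the other leg with the back-pushing morphism yields the required $g\colon T\to F_{i+1}$, and the triangle inequality together with both contractivity axioms keeps $d_\cc(g\circ\nu,\sigma_{k,i+1})$ below $\delta$. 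By the reduction of the first step, the resulting sequence is Fra\"{i}ss\'{e}.

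For uniqueness I would run a back-and-forth between two Fra\"{i}ss\'{e} sequences $(F_i,\sigma_{i,j})$ and $(G_i,\tau_{i,j})$, fixing a summable sequence $\varepsilon_k\downarrow 0$. Using (JEP) to amalgamate $F_0$ and $G_0$ and then the absorption property of $(G_i)$, I obtain a first morphism $\phi_0\colon F_0\to G_{j_0}$. Feeding it into the absorption property of $(F_i)$ produces $\psi_0\colon G_{j_0}\to F_{i_1}$ with $d_\cc(\psi_0\circ\phi_0,\sigma_{0,i_1})<\varepsilon_0$; feeding $\psi_0$ into the absorption property of $(G_i)$ produces $\phi_1\colon F_{i_1}\to G_{j_1}$ with $d_\cc(\phi_1\circ\psi_0,\tau_{j_0,j_1})<\varepsilon_1$; iterating gives a ladder
\[
 F_0 \xrightarrow{\ \phi_0\ } G_{j_0} \xrightarrow{\ \psi_0\ } F_{i_1} \xrightarrow{\ \phi_1\ } G_{j_1} \longrightarrow \cdots
\]
whose triangles commute up to $\varepsilon_k$. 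Since $\sum_k\varepsilon_k<\infty$, this is precisely a two-sided approximate intertwining of the two sequences, which establishes uniqueness.

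The hard part will be the existence construction, where I must keep the whole sequence inside the countable subcategory $\mathfrak{s}$ ---so that the task set stays countable--- while still absorbing every task; this forces interleaving (NAP) with the back-pushing clause of (SEP) at each step and a careful accounting of the accumulated approximation errors. Setting up the bookkeeping correctly, so that a task $(\nu\colon F_k\to T,\delta)$ is scheduled only once $F_k$ exists yet no task is ever forgotten, is the combinatorial crux. By comparison, the uniqueness argument is routine once approximate absorption is in hand.
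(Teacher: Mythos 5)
Your proposal is correct and follows essentially the same strategy as the source the paper cites for this statement (\cite{K13}) and as the paper's own proof of the $\Cu$-analogue (\autoref{thm:existenceuniqueness}): for existence, a countable bookkeeping of absorption tasks inside the dominating subcategory $\mathfrak{s}$, discharged one at a time by interleaving (NAP) with the back-pushing clause of (SEP) ---the paper packages exactly this recursion as an application of the Rasiowa--Sikorski lemma to the poset of finite sequences in $\mathfrak{s}$ with the cofinal sets $D_{n,\alpha,k}$--- and, for uniqueness, the standard back-and-forth ladder with summable errors $\varepsilon_k$, which is precisely a two-sided approximate intertwining. The error estimates you indicate (triangle inequality plus the two contractivity axioms of \autoref{dfn:MetEnr}) are the right ones and close the argument.
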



Let us now recall the notion of \emph{Fra\"{i}ss\'{e} limits}. As stated before, these objects are often \textquoteleft large\textquoteright, in the sense that they do not belong to the Fra\"{i}ss\'{e} category $\cc$ at hand. Instead, they are built as inductive limits of  $\cc$-objects. (Particularly, as inductive limits of Fra\"{i}ss\'{e} sequences.)

More concretely, this amounts to the fact that a Fra\"{i}ss\'{e} category $\cc$ need not have inductive limits. Because of this, one often considers an inclusion of categories of the form $\cc\subseteq\dd$, where $\dd$ does have inductive limits. However, the category $\dd$ cannot be \textquoteleft too\textquoteright\ large, since this inclusion is asked to satisfy the following \emph{almost factorization property}.

\begin{dfn}\label{dfn:KubisAFP}
Let $\cc,\dd$ be metric-enriched categories such that $\dd$ has inductive limits and $\cc\subseteq \dd$. We say that the inclusion $\cc\subseteq \dd$ has the \emph{almost factorization property} if, for any inductive system $(A_i,\sigma_{i,j})_{i\in\N}$ in $\cc$ with $\dd$-limit $(A,\sigma_{i,\infty})_{i}$, any $B\in \cc$, any $\dd$-morphism $\beta\colon B\longrightarrow A$, and any $\varepsilon >0$, there exist $i\in \mathbb{N}$ and a $\cc$-morphism $\beta_\varepsilon\colon B\longrightarrow A_i$ such that $d_\dd(\sigma_{i,\infty}\circ \beta_{\varepsilon},\beta)<\varepsilon$.
\end{dfn}

\begin{thm}[cf. {\cite{K13}}]
\label{thm:fraisselim}
Let $\cc$ be a Fra\"{i}ss\'{e} category included in a category $\dd$ which admits inductive limits, and such that any object in $\dd$ is a limit of a $\cc$-sequence. Then, any Fra\"{i}ss\'{e} sequence in $\cc$ has a $\dd$-limit $F$ satisfying the following properties:
\begin{itemize}
 \item[(i)] $F$ is unique up to isomorphism.
 \item[(ii)] For any $D\in \dd$, the set $\Hom_\dd(D,F)$ is nonempty.
 \end{itemize}
If, additionally, $\cc\subseteq \dd$ satisfies the almost factorization property, then 
\begin{itemize}
 \item[(iii)] For any $\varepsilon>0$, any $C\in \cc$, and any $\alpha_1,\alpha_2\in \Hom_\dd(C,F)$, there exists a $\dd$-isomorphism $\eta\colon F\overset{\cong}{\longrightarrow} F$ such that $d_\dd(\eta \circ\alpha_1,\alpha_2)<\varepsilon$.
\end{itemize}
\end{thm}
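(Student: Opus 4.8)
The plan is to establish the three properties by running the abstract Fra\"{i}ss\'{e} machinery in a metric-enriched setting, following the Banach-space paradigm from \cite{K13}. First I would fix a Fra\"{i}ss\'{e} sequence $(F_i,\sigma_{i,j})_i$ in $\cc$ and take its $\dd$-limit $(F,\sigma_{i,\infty})_i$, which exists by hypothesis. The backbone of all three statements is an \emph{approximate extension/absorption lemma}: given any $\cc$-object $C$ and any $\dd$-morphism $\gamma\colon C\to F$, the morphism $\gamma$ can be approximated (in $d_\dd$, on a prescribed tolerance) by a composite $\sigma_{i,\infty}\circ\delta$ with $\delta\colon C\to F_i$ a $\cc$-morphism, and then the Fra\"{i}ss\'{e} property of the sequence lets one ``swallow'' $\delta$ back into the tail of the sequence up to arbitrarily small error. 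The almost factorization property is precisely what converts the $\dd$-morphism into a $\cc$-morphism into some finite stage $F_i$, so it is the hinge connecting the two halves of the theorem.

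For \emph{(ii)}, I would take $D\in\dd$ and, using the assumption that every $\dd$-object is a limit of a $\cc$-sequence, write $D=\varinjlim(D_k,\tau_{k,j})$. The goal is to build a $\dd$-morphism $D\to F$ as the limit of a coherent system of $\cc$-morphisms $D_k\to F_{i_k}$. I would construct these inductively: at each stage apply the Fra\"{i}ss\'{e} absorption property of $(F_i,\sigma_{i,j})_i$ to factor the composite $D_k\to D_{k+1}\to F_{i_{k+1}}$ through the connecting maps up to an error summable over $k$, so that the resulting triangles commute approximately with tolerances tending to $0$. The metric-enrichment axioms (ii) and (iii) of \autoref{dfn:MetEnr} guarantee these approximate commutations are stable under pre- and post-composition, hence the sequence of partial morphisms is Cauchy and passes to a genuine $\dd$-morphism into the limit; the analogue of \autoref{thm:UniLimComp} delivers the limit morphism.

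For \emph{(i)}, uniqueness of $F$, the standard argument is a \textbf{back-and-forth} construction: given two Fra\"{i}ss\'{e} sequences with $\dd$-limits $F$ and $F'$, one builds a two-sided approximate intertwining between them. \autoref{prp:FraisseCathasSeq} already gives that any two Fra\"{i}ss\'{e} sequences are equivalent up to two-sided approximate intertwining, so the work reduces to showing that such an intertwining induces a $\dd$-isomorphism of the limits; this is where I would invoke \autoref{thm:2SidIntCu} (the $\Cu$-analogue of Elliott's intertwining argument). For \emph{(iii)}, the homogeneity statement, I would again use back-and-forth but now \emph{anchored} at the given pair $\alpha,\beta\colon C\to F$. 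Using almost factorization, replace $\alpha$ and $\beta$ by approximate factorizations through finite stages $F_i$ and $F_j$; then alternately apply (NAP) and the Fra\"{i}ss\'{e} absorption property to interleave the two into a single two-sided intertwining of the sequence with itself that starts by matching $\alpha$ to $\beta$ on the finite set $F$ up to the prescribed $\varepsilon$. The induced self-map $\eta$ is a $\dd$-isomorphism by the same intertwining theorem, and it satisfies $\eta\circ\alpha\simeq_F\beta$ by construction.

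The main obstacle I anticipate is the bookkeeping needed to make the back-and-forth argument go through with \emph{finite-set comparison} $\simeq_F$ rather than a genuine enriched metric on every Hom-set. Because the comparison is only defined relative to finite subsets, one must carefully propagate, at each step of the induction, not just an $\varepsilon$-tolerance but also a nested exhaustion of the relevant objects by finite sets, and verify that post-composition with the connecting maps does not destroy the comparison already achieved on earlier finite sets. Controlling this interaction between the tolerances $\varepsilon_k$ and the finite sets $F_k$---so that the diagonal limit morphisms are well-defined and the intertwining genuinely closes up---is the delicate technical heart of the proof; everything else is a faithful transcription of Kubi\'{s}'s abstract scheme once \autoref{thm:UniLimComp} and \autoref{thm:2SidIntCu} are in hand.
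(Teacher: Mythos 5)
The paper does not actually prove \autoref{thm:fraisselim}: it is quoted from Kubi\'{s} \cite{K13} without proof, so the only internal point of comparison is the proof of its Cuntz analogue, \autoref{prp:FraisseLimCu}, which is assembled from \autoref{thm:existenceuniqueness}, \autoref{thm:2SidIntCu}, \autoref{lma:universality}, \autoref{lma:homogeneity} and \autoref{prop:induced}. Your outline follows exactly that scheme --- a one-sided approximate intertwining into the Fra\"{i}ss\'{e} sequence for universality, a two-sided approximate intertwining for uniqueness, and an anchored back-and-forth after almost factorization for homogeneity --- so the architecture is the right one.

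However, you have transplanted the wrong machinery into this particular statement. \autoref{thm:fraisselim} lives in the general metric-enriched setting of \autoref{dfn:MetEnr}: its hypotheses and its conclusion (iii) are phrased in terms of the enriched metric $d_\dd$, not in terms of finite-set comparison. The results you invoke, \autoref{thm:UniLimComp} and \autoref{thm:2SidIntCu}, are theorems about $\Cu$-morphisms and $\simeq_F$; they appear later in the paper precisely because the metric-enriched theory does \emph{not} apply to $\Cu$, and they cannot be cited here. In the metric setting, the step ``the sequence of partial morphisms is Cauchy and passes to a genuine $\dd$-morphism'' requires completeness of the relevant $\Hom_\dd$-sets (or an equivalent convergence hypothesis on $\dd$), which is part of Kubi\'{s}'s framework and is not supplied by \autoref{thm:UniLimComp}; similarly, the fact that a two-sided approximate intertwining of sequences induces an isomorphism of $\dd$-limits must be proved in the metric language, not borrowed from the $\Cu$ version. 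Finally, your conclusion for (iii) is $\eta\circ\alpha\simeq_F\beta$, whereas the statement asks for $d_\dd(\eta\circ\alpha_1,\alpha_2)<\varepsilon$. None of this changes the underlying back-and-forth idea, but as written your proof targets \autoref{prp:FraisseLimCu} rather than \autoref{thm:fraisselim}.
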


\begin{rmk}\label{rmk:FraisseAUE}
The category of $\CatCa$-algebras $\CatCa$ is metric-enriched by the usual norm-distance between $^*$-homomorphisms. Therefore, the definitions given here can be applied to $\CatCa$ directly (as done in \cite{GK20}). However, it is more common to compare $^*$-homomorphisms metrically on finite sets. (See e.g. \cite{Vig22}.) For example, (NAP) in \autoref{dfn:FraisseMetric} gets changed to: for any $\varepsilon$, $\alpha_1,\alpha_2$, and finite set $F\subseteq A$, there exist $^*$-homomorphism $\beta_1,\beta_2$ such that $\Vert\beta_1\circ\alpha_1 (x)-\beta_2\circ\alpha_2 (x)\Vert<\varepsilon$ for every $x\in F$. A similar change is done in the third condition of (SEP) and the definition of Fra\"{i}ss\'{e} sequence.

As we will discuss in \autoref{rmk:endingpartc}, for our purposes it would also be interesting to study Fra\"{i}ss\'{e} categories of $\CatCa$-algebras with respect to approximate unitary equivalence. In that version, (NAP) would be changed to: for any $\varepsilon$, $\alpha_1,\alpha_2$, and finite set $F\subseteq A$, there exist $^*$-homomorphism $\beta_1,\beta_2$ and a unitary $u\in\tilde{C}$ such that $\Vert u^*\beta_1\circ\alpha_1 (x)u-\beta_2\circ\alpha_2 (x)\Vert<\varepsilon$ for every $x\in F$. Analogous changes would be made to the other definitions.
\end{rmk}

\section{\texorpdfstring{Fra\"{i}ss\'{e} categories of Cuntz semigroups}{Fraisse categories of Cuntz semigroups}}\label{sec:FraisseCu}

As explained in the previous section, the approach to Fra\"{i}ss\'{e} categories from \cite{K13} requires each ${\rm Hom}$-set to be equipped with a right- and left-enriched metric. This rarely happens in the category $\Cu$: Every ${\rm Hom}_{\Cu}$-set admits a natural metric (defined and studied in the last section of this paper), but such a metric is seldom left-enriched. This was already the case for the specific instances of this metric considered in the past. (See e.g. \cite{C22} and \cite{CE08}.)

To overcome these constraints, we introduce a theory of Fra\"{i}ss\'{e} categories for $\Cu$-semigroups where, instead of using a metric on the ${\rm Hom}$-sets, we compare morphisms on finite sets. This allows us to bypass any sort of enrichment property. In the course of our investigations, we also define an analog of Cauchy sequences for $\Cu$-morphisms, which are shown to have a unique limit. Further, building on the results from \cite{C22}, we develop a general theory of one- and two-sided approximate intertwinings in $\Cu$. These tools allow us to define and obtain analogous notions and results to those of Kubi\'{s}. We finish the section by describing the relation between these theories in the context of $\CatCa$-algebras and concrete Cuntz semigroups.

First, let us recall some preliminaries about the category $\Cu$.

\begin{prg}[\textbf{$\Cu$-semigroups}]\label{pgr:Cuntz}
Let $x,y$ be elements in a partially ordered set $P$. We write $x\ll y$ if, for every increasing sequence $(z_n)_n$ which has a supremum such that $y\leq \sup_n z_n$, there exists $n\in\N$ such that $x\leq z_n$.

 As defined in \cite{CEI08}, a positively ordered monoid $S$ is said to be a \emph{$\Cu$-semigroup} if $S$ satisfies the following properties:
 \begin{itemize}
  \item[(O1)] Every increasing sequence in $S$ has a supremum.
  \item[(O2)] Every element in $S$ can be written as the supremum of a $\ll$-increasing sequence.
  \item[(O3)] The addition and the $\ll$-relation are compatible.
  \item[(O4)] Suprema of increasing sequences and the addition are compatible.
 \end{itemize}

 A monoid morphism between $\Cu$-semigroups is a \emph{$\Cu$-morphism} if it preserves the order, the $\ll$-relation, and suprema of increasing sequences. We denote the category of $\Cu$-semigroups and $\Cu$-morphisms by $\Cu$. (See e.g. \cite{APT18} or \cite{GP22} for a more detailed exposition.)

The \emph{Cuntz semigroup} of a $\CatCa$-algebra $A$, denoted by $\Cu(A)$, is the quotient $(A\otimes\mathcal{K})_+/\!\sim$ equipped with the addition induced by diagonal addition and the order induced by $\precsim$, where the relations $\precsim$ and $\sim$ are defined as follows:
 \[
\begin{split}
  a\precsim b &:\!\iff a=\lim_n r_n br_n^*\text{ for some sequence }(r_n)_n\subseteq A\otimes \mathcal{K}.\\
  a\sim b &:\!\iff  a\precsim b\text{ and }b\precsim a.
\end{split}
 \]

 The Cuntz semigroup of $A$, first considered in \cite{C78}, is always a $\Cu$-semigroup. (See \cite{CEI08}.) Further, every $^*$-homomorphism from $A$ to $B$ induces a $\Cu$-morphism from $\Cu (A)$ to $\Cu (B)$.

 The relation between concrete Cuntz semigroups and the abstract category $\Cu$ has been studied extensively. (See \cite{APRT21, APRT22, APT20, C21a, C21b, CRS10,TV22, TV23} among many others.) For instance, it is known that the category $\Cu$ has direct limits and that the functor $\Cu$ is continuous. (See \cite{APT18}, \cite{CEI08}.)

 A $\Cu$-semigroup $S$ is \emph{countably-based} if $S$ contains a countable, sup-dense subset. Examples include the Cuntz semigroup of any separable $\CatCa$-algebra.
\end{prg}

\subsection{\texorpdfstring{Comparison of $\Cu$-morphisms}{Comparison of Cu-morphisms}}

As mentioned at the beginning of this section, it is not clear when a set of $\Cu$-morphisms can be equipped with a (meaningful) enriching metric. In order to overcome this issue and work in the general setting, we will compare $\Cu$-morphisms on finite sets. This notion was  introduced explicitly in \cite{C22}, although the idea had also appeared implicitly in the past when working with specific families of $\Cu$-morphisms. (See e.g. \cite{C23b,C23a,CE08,L22,R12}.)

\begin{dfn}[{\cite[Definition 3.9]{C22}}]\label{dfn:CompMorph}
Given a pair of $\Cu$-morphisms $\alpha,\beta\colon S\longrightarrow T$ and a finite subset $F\subseteq S$, we say that \emph{$\alpha$ and $\beta$ compare on $F$}, and we write $\alpha\simeq_F \beta$, if for any pair $x',x\in F$ with $x'\ll x$, we have
\[
  \alpha (x')\leq \beta (x)
  \quad\text{and}\quad
  \beta (x')\leq \alpha (x).
 \]
\end{dfn}

\begin{rmk}\label{rmk:CharacEqFiSetComp}
As observed in \cite{C22}, for any two $\Cu$-morphisms $\alpha,\beta\in\rm Hom_{\Cu}(S,T)$ the following are equivalent:
\begin{itemize}
\item[(i)] $\alpha=\beta$.
\item[(ii)] $\alpha\simeq_F \beta$ for any finite subset $F\subseteq S$.
\item[(iii)] $\alpha\simeq_{\{s',s\}} \beta$ for any $s',s\in S$ with $s'\ll s$.
\end{itemize}

Note that finite-set comparison can also be used for weaker forms of morphisms between $\Cu$-semigroups. (See \cite[Definition 2.2]{C23b}.)
\end{rmk}

With this notion of comparison at hand, we can define Cauchy sequences, limits, and approximate intertwinings in the category $\Cu$.

\begin{dfn}\label{dfn:CauchySeq}
Let $(\alpha_i)_i$ be a sequence of $\Cu$-morphisms in $\Hom_{\Cu}(S,T)$. We say that $(\alpha_i)_i$ is a \emph{Cauchy sequence} if, for any finite subset $F\subseteq S$, there exists an index $i_F$ such that  $\alpha_j\simeq_F \alpha_k$ whenever $j,k\geq i_F$.
\end{dfn}

The following definition of convergence was introduced in \cite[Definition 5.1]{L22} for concrete Cuntz morphisms. We give here the definition for any sequence of $\Cu$-morphisms.

\begin{dfn}\label{dfn:LimitLin}
Let $(\alpha_i)_i$ be a sequence of $\Cu$-morphisms in $\Hom_{\Cu}(S,T)$. We say that $(\alpha_i)_i$ \emph{converges towards a $\Cu$-morphism $\alpha\colon S\longrightarrow T$} if, for any finite subset $F\subseteq S$, there exists an index $i_F$ such that $\alpha_j\simeq_F \alpha$ whenever $j\geq i_F$.
\end{dfn}

\begin{rmk}
In view of our previous remark, note that $(\alpha_i)_i$ converges to $\alpha$ if, for any pair of elements $x',x\in S$ with $x'\ll x$, there exists an index $i_0$ such that $\alpha_j(x')\leq \alpha(x)$ and $\alpha(x')\leq \alpha_j(x)$ whenever $j\geq i_0$.
\end{rmk}

\begin{exa}\label{exa:ConvCa}
If a sequence of $^*$-homomorphisms $(\varphi_i\colon A\longrightarrow B)_i$ converges in norm to a morphism $\varphi\colon A\longrightarrow B$, then the sequence $(\Cu (\varphi_i))_i$ converges to $\Cu (\varphi)$. 

Indeed, given $[a],[b]\in \Cu (A)$ with $[a]\ll [b]$, we can find $\varepsilon >0$ small enough such that $[a]\leq [(b-\varepsilon )_+]$. Moreover, we can find $i\in \N$ big enough such that $\Vert \varphi_j - \varphi\Vert< \varepsilon$ for every $j\geq i$. Therefore, we have
$\varphi_j (a)\precsim \varphi_j ( (b-\varepsilon)_+ )=(\varphi_j (b)-\varepsilon )_+\precsim \varphi (b)$ and $ 
\varphi (a)\precsim\varphi ( (b-\varepsilon)_+ ) = (\varphi (b)-\varepsilon )_+\precsim \varphi_j (b)$,
as desired.
\end{exa}

\begin{thm}\label{thm:UniLimComp}
Let $S,T$ be $\Cu$-semigroups. Then any Cauchy sequence $(\alpha_i)_i$ in ${\rm Hom}_{\Cu}(S,T)$ converges towards a unique $\Cu$-morphism.
\end{thm}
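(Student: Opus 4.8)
The plan is to split the statement into existence and uniqueness, and to dispatch uniqueness first since it is the cleaner half. Suppose $\alpha,\alpha'$ both satisfy \autoref{dfn:LimitLin}. Given $s'\ll s$, I would use (O2) to interpolate an element $t$ with $s'\ll t\ll s$; then for all large $j$, convergence gives $\alpha(s')\leq\alpha_j(t)$ (from the pair $s'\ll t$) and $\alpha_j(t)\leq\alpha'(s)$ (from the pair $t\ll s$), so $\alpha(s')\leq\alpha'(s)$, and symmetrically $\alpha'(s')\leq\alpha(s)$. Hence $\alpha\simeq_{\{s',s\}}\alpha'$ for every such pair, and \autoref{rmk:CharacEqFiSetComp} yields $\alpha=\alpha'$. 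Consequently, once existence is settled, uniqueness of the limit is automatic, and this will also free me from having to check that the construction below is independent of the choices made in it.

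For existence I would construct a candidate explicitly. For each $x\in S$ fix, using (O2), a $\ll$-increasing sequence $x^{(1)}\ll x^{(2)}\ll\cdots$ with $\sup_n x^{(n)}=x$, and use the Cauchy hypothesis to pick an increasing sequence of indices $(i_n)$ with $\alpha_j\simeq_{\{x^{(n)},x^{(n+1)}\}}\alpha_k$ for all $j,k\geq i_n$. Setting $v_n:=\alpha_{i_n}(x^{(n)})$, the comparison on $\{x^{(n)},x^{(n+1)}\}$ shows that $(v_n)_n$ is increasing, so I may define $\alpha(x):=\sup_n v_n$. The single property that makes everything work is that $v_n\leq\alpha_l(x^{(n+1)})$ for all $l\geq i_n$ (call it $(\star)$), which is again immediate from that comparison. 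Property $(\star)$ is precisely what lets one replace the fixed (and possibly small) construction index $i_n$ by an arbitrarily large index whenever needed.

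The technical heart is to show that $(\alpha_i)_i$ converges to this $\alpha$ in the sense of \autoref{dfn:LimitLin}, i.e.\ that for $s'\ll s$ one has $\alpha(s')\leq\alpha_k(s)$ and $\alpha_k(s')\leq\alpha(s)$ for all large $k$. For the first inequality I would bound each $v_n$ coming from $\alpha(s')$ by $\alpha_l(s')$ via $(\star)$ and then apply the Cauchy comparison on $\{s',s\}$ to pass to $\alpha_k(s)$, obtaining a threshold on $k$ that is independent of $n$ and hence controls the whole supremum. For the second I would use $s'\ll s=\sup_m s^{(m)}$ to find $s'\ll s^{(m_0)}$, and then chain the comparisons on $\{s',s^{(m_0)}\}$, $\{s^{(m_0)},s^{(m_0+1)}\}$ and $\{s^{(m_0+1)},s^{(m_0+2)}\}$, again using $(\star)$ to align all the indices and to land on a term of the supremum defining $\alpha(s)$. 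I expect this index bookkeeping---reconciling the fixed indices $i_n$ baked into the construction with the ever-growing Cauchy thresholds---to be the main obstacle, and $(\star)$ together with the interpolation of $\ll$ to be the devices that overcome it.

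Finally I would deduce that $\alpha$ is a $\Cu$-morphism from convergence, $(\star)$, and the fact that each $\alpha_i$ is one. Monotonicity and preservation of suprema follow by bounding a general $a\ll\alpha(x)$ by some $v_n\leq\alpha_l(x^{(n+1)})$ and then invoking the convergence inequalities at a large index; preservation of $\ll$ comes from inserting two interpolants $x\ll x_1\ll x_2\ll y$, writing $\alpha(x)\leq\alpha_k(x_1)\ll\alpha_k(x_2)\leq\alpha(y)$ for large $k$, and using $a\leq b\ll c\Rightarrow a\ll c$. Additivity is the one place where some decomposition is needed: the inequality $\alpha(x)+\alpha(y)\leq\alpha(x+y)$ uses $x^{(n+1)}+y^{(m+1)}\ll x+y$ together with (O4), while $\alpha(x+y)\leq\alpha(x)+\alpha(y)$ avoids any Riesz-type decomposition by instead exploiting that $x+y=\sup_m(x^{(m)}+y^{(m)})$, so that any $(x+y)^{(n)}\ll x+y$ already lies below some $x^{(m)}+y^{(m)}$ before one splits via the additivity of $\alpha_l$ and passes to the limit.
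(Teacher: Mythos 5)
Your proposal is correct, but it follows a genuinely different route from the paper's. The paper first treats the case where $S$ is countably-based: it fixes a global sup-dense basis $B=\bigcup_n B_n$, extracts a single reindexing $\varphi$ with $\alpha_j\simeq_{B_n}\alpha_k$ for $j,k\geq\varphi(n)$, defines a map $\alpha_{\rm Seq}$ on all $\ll$-increasing sequences in $B$, proves a monotonicity claim for $\alpha_{\rm Seq}$ that simultaneously yields well-definedness and order-preservation of the induced $\alpha$, and only then verifies convergence; the general case is then obtained by restricting to countably-based sub-$\Cu$-semigroups (via \cite[Lemma~5.1]{TV21}) and patching the resulting limits together. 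You instead build $\alpha$ pointwise---for each $x$ a chosen $\ll$-increasing sequence and a chosen increasing sequence of Cauchy thresholds---which works uniformly for arbitrary $S$ because the Cauchy hypothesis is a statement about arbitrary finite sets; this removes the countably-based reduction and the appeal to \cite{TV21} entirely. You also invert the paper's order of verification: you establish the two convergence inequalities for the bare function $\alpha$ first (legitimately, since they only use order-preservation of the $\alpha_j$ and the definition of $\alpha$ as a supremum, so there is no circularity) and then derive monotonicity, additivity, preservation of suprema and of $\ll$ from those inequalities together with $(\star)$; your observation that independence of the choices is automatic from uniqueness of limits is also sound, and in fact unnecessary for the proof, since a single choice suffices. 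The trade-off is that the paper's global basis makes the index bookkeeping visibly monotone through $\varphi\circ\psi$, whereas your pointwise construction shifts all the work into reconciling the fixed construction indices $i_n$ with the growing Cauchy thresholds---which $(\star)$ and $\ll$-interpolation do handle, as you indicate. Your treatment of additivity via $x+y=\sup_m(x^{(m)}+y^{(m)})$ is the right move, since $\Cu$-semigroups need not satisfy any Riesz decomposition.
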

\begin{proof}
Let us first assume that $S$ is countably-based, so that there exists a $\subseteq$-increasing sequence $(B_n)_n$ of finite sets of $S$ such that $B:=\bigcup\limits_{n\in\N}B_n$ is sup-dense in $S$. Let $(\alpha_i)_i$ be a Cauchy sequence in ${\rm Hom}_{\Cu}(S,T)$. We can find a strictly increasing map $\varphi\colon\N\longrightarrow \N$ such that $\alpha_j\simeq_{B_{n}}\alpha_k$ for any $j,k\geq \varphi(n)$.

Let ${\rm Seq}_{\ll}(B)$ denote the set of $\ll$-increasing sequence in $B$, which we may think of as maps $f\colon\N\longrightarrow B$, $i\mapsto f_i$, such that $f_i\ll f_{i+1}$ in $S$ for each $i$. With this notation, there exists a map $\psi \colon\N\times {\rm Seq}_{\ll}(B)\longrightarrow \N$ such that
\begin{itemize}
\item[(i)] $\psi (\cdot ,f)\colon\N\longrightarrow\N$ is strictly increasing for every fixed $f$.
\item[(ii)] $\{f_0,\ldots,f_{l+1}\}\subseteq B_{\psi (l,f)}$ for each $l\in\N$.
\end{itemize}
Note that the map $\varphi\circ\psi (\cdot ,f)$ is strictly increasing for any $f\in {\rm Seq}_{\ll}(B)$. 

Now fix $l\in\N$. By the definition of $\varphi$ we have that $\alpha_j\simeq_{B_{\psi(l,f)}} \alpha_k$ for any $j,k\geq \varphi(\psi(l,f))$. Further, it follows from the construction of $\phi$ that
\[
 \alpha_j(f_i)\leq \alpha_k(f_{i+1}) \quad\text{and}\quad \alpha_k(f_{i})\leq \alpha_j(f_{i+1})
\]
for any $0 \leq i\leq l$ and any $j,k\geq \varphi(\psi(l,f))$.

In particular, for $j=\varphi(\psi(l,f))$, $k=\varphi(\psi(l+1,f))$ and $i=l$, we obtain
\[
\alpha_{\varphi(\psi(l,f))} (f_l)\leq 
\alpha_{\varphi(\psi(l+1,f))} (f_{l+1}).
\]
In other words, the sequence $\left( \alpha_{\varphi(\psi(l,f))}(f_l) \right)_l$ is increasing. Consequently, we can define the following map
\[
\begin{array}{ll}
\alpha_{\rm Seq}\colon {\rm Seq}_{\ll}(B)\longrightarrow T\\
\hspace{2,1cm} f\longmapsto \sup_l  \alpha_{\varphi(\psi(l,f))}(f_l)
\end{array}
\]

We aim to construct a $\Cu$-morphism $\alpha\colon S\longrightarrow T$ induced by $\alpha_{\rm Seq}$. For this, we will need the following claim.

\textbf{Claim.} Let $f,g\in {\rm Seq}_{\ll}(B)$ be such that $\sup f\leq \sup g$. Then $\alpha_{\rm Seq}(f)\leq \alpha_{\rm Seq}(g)$.

\textit{Proof of the Claim.} Let $f,g$ be as in the statement. For any $l\in\N$, there exists $m\in\N$ such that $f_l\ll f_{l+1} \ll g_m$. Since $\varphi\circ\psi (\cdot ,g)$ is strictly increasing, we can assume that $m$ is large enough so that $\varphi(\psi(l,f))\leq \varphi(\psi(m,g))$. By construction, we have $\alpha_{\varphi(\psi (l,f))}(f_l)\leq \alpha_{k}(f_{l+1})$ for any $k\geq \varphi(\psi(l,f))$. Thus, we compute
\[
\alpha_{\varphi(\psi(l,f))}(f_l)
\leq \alpha_{\varphi(\psi(m,g))}(f_{l+1}) 
\leq \alpha_{\varphi(\psi(m,g))}(g_m)\leq \alpha_{\rm Seq} (g)
\]
which implies that $\alpha_{\rm Seq}(f)\leq \alpha_{\rm Seq}(g)$ and proves the claim.

Since $B$ is dense in $S$, we are now able to construct the following order-preserving map
\[
\begin{array}{ll}
\alpha\colon S\longrightarrow T\\
\hspace{0,55cm} s\longmapsto \alpha_{\rm Seq}((s_i)_i)
\end{array}
\]
where $(s_i)_i$ is any $\ll$-increasing sequence in $B$ obtained from (O2), whose supremum is $s$. (The claim shows that $\alpha$ is well-defined, i.e. $\alpha$ does not depend on the sequence $(s_i)_i$, and also that $\alpha$ preserves the order.) 

Further, using (O4), it is readily checked that $\alpha$ preserves the addition. Using a diagonal-type argument (see e.g. the proof of \cite[Lemma 3.12]{C22}), it can also be shown that $\alpha$ preserves suprema of increasing sequences. 

We are left to show that $\alpha$ preserves the compact-containment relation. Let $f,g\in {\rm Seq}_{\ll}(B)$ be such that $\sup f\ll \sup g$. Then, there exists $m\in\N$ such that $f_l\ll  g_{m-2} \ll g_{m-1}\ll g_m$ for any $l\in\N$. Find $l_0$ big enough such that $\varphi(\psi(m,g))\leq \varphi(\psi(l,f)) $ for any $l\geq l_0$. By construction, we have 
\[
  \alpha_{\varphi(\psi(l,f))}(f_l)\ll \alpha_{\varphi(\psi(l,f))}(g_{m-2}) \leq \alpha_{\varphi(\psi(m,g))}(g_{m-1})\ll \alpha_{\varphi(\psi(m,g))}(g_{m})\leq 
  \alpha_{\rm Seq}(g)
\]
whenever $l\geq l_0$. In particular, we get $\alpha_{\rm Seq}(f)\ll \alpha_{\rm Seq}(g)$ and, hence, $\alpha$ preserves the $\ll$-relation. This shows that $\alpha$ is a well-defined $\Cu$-morphism.

Finally, let us prove that $(\alpha_i)_i$ converges to $\alpha$. Let $x',x\in S$ be such that $x'\ll x$. By density of $B$ in $S$, there exists $f\in {\rm Seq}_{\ll}(B)$ such that $x' \ll \sup f \ll x$. 
On the one hand, note that there exists $l\in\N$ big enough such that $x'\ll f_l\ll f_{l+1}\ll x$. We deduce that, for any $i\geq  \varphi(\psi(l+1,f))$,  we have
 \[
 \alpha_i(x')\leq \alpha_i(f_l) \leq \alpha_{\varphi(\psi(l+1,f))}(f_{l+1})\leq \alpha_{\rm Seq}(f)= \alpha(\sup f) \leq \alpha(x).
 \]
On the other hand, note that $\alpha (x')\ll \alpha_{\rm Seq}(f)$ and, hence, there exists $m\in\N$ big enough such that $\alpha(x')\ll \alpha_{\varphi(\psi(m,f))}(f_m)$. We deduce that, for any $i\geq \varphi(\psi(m,f))$, we have
\[
 \alpha (x')\leq \alpha_{\varphi(\psi(m,f))}(f_m)\leq \alpha_i (f_{m+1}) \leq \alpha_i (x).
\]
This shows that $(\alpha_i)_i$ converges towards $\alpha$. Since such an $\alpha$ is unique, this also proves that $\alpha$ does not depend on the basis $B$ chosen.\\

Now let us show that the result holds for any (possibly not countably-based) $\Cu$-semigroup $S$.
Let $(\alpha_i)_i$ be a Cauchy sequence in ${\rm Hom}_{\Cu}(S,T)$. Any countably-based sub-$\Cu$-semigroup $H$ of $S$ (i.e. $H$ is $\Cu$-semigroup that order-embeds into $S$) naturally induces a Cauchy sequence $({\alpha_i}_{\mid H})_i$ by restriction. We can thus construct its limit as above, which we denote by $\alpha_{H}$.

Let $x\in S$. It follows from \cite[Lemma 5.1]{TV21} that there exists a (possibly not unique) countably-based sub-$\Cu$-semigroup $H_x$ such that $x\in H_x$. Let $H_1,H_2$ be countably-based sub-$\Cu$-semigroups that contain $x$. By construction, there exist $\ll$-increasing sequences $(x_n)_n$ and $(x_n')_n$ in $H_1$ and $H_2$ respectively with supremum $x$ such that $\alpha_{H_1}(x)=\sup_n\alpha_n (x_n)$ and $\alpha_{H_2}(x)=\sup_n\alpha_n (x_n')$. Further, we can choose these sequences so that, for any $m\geq n$, we have
\[
 \alpha_n (x_n)\leq \alpha_m (x_{n+1})\quad\text{and}\quad 
 \alpha_n (x_n')\leq \alpha_m (x_{n+1}')
 .
\]
Let $n\in\N$, and find $m\geq n$ such that $x_{n+1}\ll x_m'$. Then, we get 
\[
 \alpha_n (x_n)\leq \alpha_m (x_{n+1})\leq \alpha_m (x_m')\leq \alpha_{H_2}(x).
\]
Taking supremum over $n$ we obtain $\alpha_{H_1}(x)\leq \alpha_{H_2}(x)$ and, by a symmetric argument, we also get $\alpha_{H_2}(x)\leq \alpha_{H_1}(x)$. We conclude that $\alpha_{H_1}(x)= \alpha_{H_2}(x)$ for any $x\in S$ and, consequently, that the following map is well-defined
\[
\begin{array}{ll}
\alpha\colon S\longrightarrow T\\
\hspace{0,6cm} x\longmapsto \alpha_{H_x}(x)
\end{array}
\]

Using the techniques from \cite{TV21} one can check that $\alpha$ is a $\Cu$-morphism and that the sequence $(\alpha_i)_i$ converges to $\alpha$ (by construction). This ends the proof.
\end{proof}

\begin{ntn}
We have just shown that any Cauchy sequence $(\alpha_i)_i$ in $\Hom_{\Cu}(S,T)$ converges towards a unique $\Cu$-morphism $\alpha\colon S\longrightarrow T$. We will say that $\alpha$ is the \emph{limit} of the sequence, and write $\lim_i \alpha_i=\alpha$. 
\end{ntn}

We are now able to define and study approximate intertwinings in the category $\Cu$. This generalizes the concepts introduced in \cite{C22} for the specific case of uniformly-based $\Cu$-semigroups. 

\begin{dfn}\label{dfn:OneSidInt}
Let $(S_i,\sigma_{i,j})_{i\in\N}$ and $(T_i,\tau_{i,j})_{i\in\N}$ be two inductive sequences in $\Cu$. Assume that there exists a strictly  increasing map $\varphi\colon\N\longrightarrow \N$ together with a sequence of $\Cu$-morphisms  $(\alpha_i\colon S_i\longrightarrow T_{\varphi(i)})_i$ satisfying the following property:

For any finite subset $F\subseteq S_i$, there exists an index $i_F\geq i$ such that, for any $j\geq i_F$ and any $k\geq j$, the diagram
 \[
   \xymatrix{
     S_i\ar[r]^{\sigma_{i,j}} & S_{j}\ar[d]_{\alpha_{j }}\ar[rr]^{\sigma_{j,k }} && S_k\ar[d]^{\alpha_k} \\
   & T_{\varphi(j)}\ar[rr]_{\tau_{\varphi(j),\varphi(k) }} && T_{\varphi(k)}
   } 
   \]
approximately commutes within $F$, that is, $\alpha_k\circ\sigma_{i,k}\simeq_F \tau_{\varphi(j),\varphi(k)}\circ\alpha_j\circ\sigma_{i,j}$.

We say that $(\alpha_i)_i$ is a \emph{one-sided approximate intertwining}.
\end{dfn}

\begin{prop}\label{prop:induced}
Let $(S_i,\sigma_{i,j})_{i\in\N}$ and $(T_i,\tau_{i,j})_{i\in\N}$ be two inductive sequences in $\Cu$ and let $(\alpha_i\colon S_i\longrightarrow T_{\varphi(i)})_i$ be a one-sided approximate intertwining. Then there exists a $\Cu$-morphism $\alpha\colon S\longrightarrow T$ such that, for any finite subset $F\subseteq S_i$, there exists $i_F\geq i$ such that, for any $j\geq i_F$, the diagram
   \[
   \xymatrix{
     S_i\ar[r]^{\sigma_{i,j}} & S_{j}\ar[d]_{\alpha_{j }}\ar[rr]^{\sigma_{j,\infty }} && S\ar@{.>}[d]^{\exists \alpha} \\
   & T_{\varphi(j)}\ar[rr]_{\tau_{\varphi(j),\infty }} && T
   } 
   \]
approximately commutes within $F$, that is, $\alpha\circ\sigma_{i,\infty}\simeq_F \tau_{\varphi(j),\infty}\circ\alpha_{j}\circ\sigma_{i,j}$. 
\end{prop}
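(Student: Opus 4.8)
The plan is to construct $\alpha$ as a limit of a suitable Cauchy sequence built from the intertwining morphisms, using the universal property of the inductive limit $S=\varinjlim S_i$ together with \autoref{thm:UniLimComp}. The key observation is that the one-sided intertwining condition says precisely that, along the tail of each index, the composites $\tau_{\varphi(j),\infty}\circ\alpha_j$ become compatible with the connecting maps $\sigma_{i,j}$. So for each $j$ I would consider the $\Cu$-morphism $\beta_j:=\tau_{\varphi(j),\infty}\circ\alpha_j\colon S_j\longrightarrow T$, and the heart of the argument is to glue the family $(\beta_j)_j$ into a single morphism out of $S$.

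First I would fix $i$ and a finite set $F\subseteq S_i$ and apply the intertwining hypothesis to get $i_F\geq i$ with $\alpha_k\circ\sigma_{i,k}\simeq_F\tau_{\varphi(j),\varphi(k)}\circ\alpha_j\circ\sigma_{i,j}$ for all $k\geq j\geq i_F$. Composing both sides with $\tau_{\varphi(k),\infty}$ (which only refines finite-set comparison, since $\Cu$-morphisms preserve $\leq$ and $\ll$) yields $\beta_k\circ\sigma_{i,k}\simeq_F\beta_j\circ\sigma_{i,j}$ for all $k\geq j\geq i_F$, where here $\simeq_F$ is read via the images $\sigma_{i,\infty}(F)\subseteq S$ or equivalently on $F\subseteq S_i$. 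In other words, the sequence of morphisms $\big(\beta_j\circ\sigma_{i,j}\colon S_i\longrightarrow T\big)_{j\geq i}$ is Cauchy in $\Hom_{\Cu}(S_i,T)$ in the sense of \autoref{dfn:CauchySeq}. By \autoref{thm:UniLimComp} it therefore converges to a unique $\Cu$-morphism, which I would call $\alpha^{(i)}\colon S_i\longrightarrow T$.

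Next I would check that the family $(\alpha^{(i)})_i$ is compatible with the connecting maps, i.e. $\alpha^{(i)}=\alpha^{(i+1)}\circ\sigma_{i,i+1}$, so that by the universal property of $S=\varinjlim(S_i,\sigma_{i,j})$ it factors through a single $\Cu$-morphism $\alpha\colon S\longrightarrow T$ with $\alpha\circ\sigma_{i,\infty}=\alpha^{(i)}$. This compatibility is immediate from uniqueness of limits, since both $\alpha^{(i)}$ and $\alpha^{(i+1)}\circ\sigma_{i,i+1}$ are the limits of essentially the same Cauchy sequence precomposed with $\sigma_{i,i+1}$, and precomposition respects finite-set convergence. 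Finally, the asserted approximate commutativity $\alpha\circ\sigma_{i,\infty}\simeq_F\tau_{\varphi(j),\infty}\circ\alpha_j\circ\sigma_{i,j}=\beta_j\circ\sigma_{i,j}$ for $j$ large is exactly the statement that $\alpha^{(i)}=\lim_j(\beta_j\circ\sigma_{i,j})$ converges, unwound through \autoref{dfn:LimitLin}.

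I expect the main obstacle to be the bookkeeping around which $\Cu$-semigroup the finite-set comparisons live in, and making sure the Cauchy/convergence statements are phrased on the correct $\Hom$-sets (namely $\Hom_{\Cu}(S_i,T)$ rather than on $S$ directly, since elements of $S$ are only approximated by images of the $S_i$). The delicate point is that $\simeq_F$ is not a metric but a relation tested on $\ll$-related pairs, so I must be careful that composing with a further connecting map $\tau$ preserves the comparison and that the limit morphism $\alpha^{(i)}$ produced by \autoref{thm:UniLimComp} genuinely assembles into a morphism on the direct limit; verifying $\ll$-preservation and sup-preservation of the glued map is where the $\Cu$-structure (O1)--(O4) and sup-density of $\bigcup_i\sigma_{i,\infty}(S_i)$ in $S$ must be invoked, though much of this can be quoted from the proof of \autoref{thm:UniLimComp}.
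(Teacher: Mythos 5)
Your proposal follows essentially the same route as the paper: the paper also observes that $(\tau_{\varphi(j),\infty}\circ\alpha_j\circ\sigma_{i,j})_j$ is Cauchy in $\Hom_{\Cu}(S_i,T)$, takes its limit $\eta_i$ (your $\alpha^{(i)}$) via \autoref{thm:UniLimComp}, checks compatibility with the connecting maps by uniqueness of limits, and invokes the universal property of the direct limit to obtain $\alpha$, with the approximate commutativity being exactly the convergence statement. The argument is correct as written.
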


\begin{proof}
For any $i\in\N$ the sequence $(\tau_{\varphi(j),\infty}\circ\alpha_j\circ\sigma_{i,j})_j$ is Cauchy. It follows from \autoref{thm:UniLimComp} that the sequence has a limit, which we denote by $\eta_i\colon S_i\longrightarrow T$. Using that this limit is unique, it is readily checked that $\eta_i=\eta_j\circ\sigma_{i,j}$ for any $i\leq j$. By the universal property of direct limits, this induces a $\Cu$-morphism $\alpha\colon S\longrightarrow T$ such that $\eta_i=\alpha\circ \sigma_{i,\infty}$ for any $i\in\N$. Finally, note that for any finite subset $F\subseteq S_i$, we can find an index $i_F$ such that $\eta_i\simeq_F \tau_{\varphi(j),\infty}\circ\alpha_{j}\circ\sigma_{i,j}$ for any $j\geq i_F$. Thus, we have
\[
\alpha\circ \sigma_{i,\infty}
= \eta_i 
\simeq_F \tau_{\varphi(j),\infty}\circ\alpha_{j}\circ\sigma_{i,j}
\]
which ends the proof.
\end{proof}

\begin{exa}\label{exa:IntCaCu}
Let $(\psi_i\colon A_i\longrightarrow B_{\varphi(i)})_i$ be a one-sided approximate intertwining of $\CatCa$-algebras from $(A_i,\varphi_{i,j})_{i\in\N}$ to $(B_i,\phi_{i,j})_{i\in\N}$. It follows from \autoref{exa:ConvCa} that the induced maps $(\Cu (\psi_i))_i$ define a one-sided approximate intertwining from $(\Cu (A_i),\Cu(\varphi_{i,j}))_{i}$ to $(\Cu(B_i),\Cu(\phi_{i,j}))_{i}$.
\end{exa}

\begin{dfn}
\label{dfn:atwosided}
Let $(S_i,\sigma_{i,j})_{i\in\N}$ and $(T_i,\tau_{i,j})_{i\in\N}$ be two inductive sequences in $\Cu$. 
Assume that there exist two strictly increasing maps $\varphi,\psi\colon\N\longrightarrow \N$ together with two sequences of $\Cu$-morphisms $(\alpha_i\colon S_i\longrightarrow T_{\varphi(i)})_i$ and $(\beta_i\colon T_i\longrightarrow S_{\psi(i)})_i$ satisfying the following property:

For any finite sets $F\subseteq S_i$ and $G\subseteq T_i$, there exist indices $i_{F},i_G\geq i$ such that, for any $j\geq i_F$, $j'\geq i_G$ and any $k\geq \varphi(j)$, $k'\geq \psi(j')$, the diagrams
\[
   \xymatrix{
     S_i\ar[r]^{\sigma_{i,j}} & S_{j}\ar[d]_{\alpha_{j }}\ar[rr]^{\sigma_{j,\psi(k) }} && S_{\psi(k)} \\
   & T_{\varphi(j)}\ar[rr]_{\tau_{\varphi(j),k }} && T_{k}\ar[u]_{\beta_k}
   } 
  \quad \quad \quad \quad 
  \xymatrix{
     & S_{\psi(j')}\ar[rr]^{\sigma_{\psi(j'),k' }} && S_{k'}\ar[d]^{\alpha_{k'}} \\
 T_i\ar[r]_{\tau_{i,j'}}   & T_{j'}\ar[u]^{\beta_{j' }}\ar[rr]_{\tau_{j',\varphi(k') }} && T_{\varphi(k')}
   } 
   \]
approximately commute within $F$ and $G$ respectively, that is, 
\[\sigma_{i,\psi(k)}\simeq_F \beta_k\circ \tau_{\varphi(j),k}\circ\alpha_j\circ\sigma_{i,j}\quad \text{and} \quad \tau_{i,\varphi(k')}\simeq_G \alpha_{k'}\circ\sigma_{\psi(j'),k'}\circ\beta_{j'}\circ\tau_{i,j'}.
\]

We say that $(\alpha_i,\beta_i)_i$ is a \emph{two-sided approximate intertwining}.\end{dfn}

\begin{rmk}
Each of the sequences $(\alpha_i)_i$ and $(\beta_i)_i$ that define a two-sided approximate intertwining induce a one-sided approximate intertwining. 
\end{rmk}

Throughout the paper, when considering a finite set $F$ for comparison of $\Cu$-morphisms, we will often need to construct a larger finite set $\tilde{F}$ which is \emph{finer} than $F$ in the following sense. 
\begin{dfn}
Let $F,\tilde{F}$ be finite subsets of a $\Cu$-semigroup and let $n\in\N$. We will say that $\tilde{F}$ is an \emph{$n$-refinement} of $F$, or that $\tilde{F}$ \emph{refines} $F$ \emph{$n$-times}, if
\begin{itemize}
\item[(i)] $F\subseteq \tilde{F}$. 
\item[(ii)] For any $f',f\in F$ such that $f'\ll f$, there exist $n$ elements $g_1,\ldots,g_n\in \tilde{F}$ such that $f'\ll g_1\ll\ldots\ll g_n\ll f$.
\end{itemize}
\end{dfn}

Note that, for any $n\geq 1$ and any finite set $F$ of a $\Cu$-semigroup, we can always find an $n$-refinement of $F$.

\begin{rmk}
One of the reasons why the previous notion is needed is that $\simeq_F$ is not a transitive relation, that is, $\alpha\simeq_F\beta\simeq_F\gamma$ does not imply $\alpha\simeq_F\gamma$. Instead, what we do have is that, if $\tilde{F}$ is an $n$-refinement of $F$, then $\alpha\simeq_{\tilde{F}}\alpha_1\simeq_{\tilde{F}}\ldots\simeq_{\tilde{F}}\alpha_n\simeq_{\tilde{F}}\beta$ implies $\alpha\simeq_F\beta$.
\end{rmk}

\begin{thm}\label{thm:2SidIntCu}
Let $(S_i,\sigma_{i,j})_{i\in\N}$ and $(T_i,\tau_{i,j})_{i\in\N}$ be two inductive sequences in $\Cu$. Assume that there exists a two-sided approximate intertwining $(\alpha_i\colon S_i\longrightarrow T_{\varphi(i)},\beta_i\colon T_i\longrightarrow S_{\psi(i)})_i$. 

Then there exists a $\Cu$-isomorphism $\alpha\colon S\cong T$ induced by $(\alpha_i)_i$ whose inverse $\beta\colon T\cong S$ is induced by $(\beta_i)_i$.
\end{thm}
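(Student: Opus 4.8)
The plan is to exploit the two one-sided intertwinings that $(\alpha_i)_i$ and $(\beta_i)_i$ each induce. By the remark preceding the theorem, the sequence $(\alpha_i)_i$ is a one-sided approximate intertwining from $(S_i,\sigma_{i,j})_i$ to $(T_i,\tau_{i,j})_i$, and $(\beta_i)_i$ is one from $(T_i,\tau_{i,j})_i$ to $(S_i,\sigma_{i,j})_i$. Applying \autoref{prop:induced} to each, I obtain $\Cu$-morphisms $\alpha\colon S\longrightarrow T$ and $\beta\colon T\longrightarrow S$ satisfying the respective approximate-commutativity relations $\alpha\circ\sigma_{i,\infty}\simeq_F \tau_{\varphi(j),\infty}\circ\alpha_j\circ\sigma_{i,j}$ and $\beta\circ\tau_{i,\infty}\simeq_G \sigma_{\psi(j'),\infty}\circ\beta_{j'}\circ\tau_{i,j'}$ for suitably large indices. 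The entire content of the theorem then reduces to showing that $\beta\circ\alpha=\id_S$ and $\alpha\circ\beta=\id_T$, since a $\Cu$-morphism with a two-sided inverse is automatically a $\Cu$-isomorphism.

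First I would verify $\beta\circ\alpha=\id_S$. By \autoref{rmk:CharacEqFiSetComp}, it suffices to show $\beta\circ\alpha\simeq_{\{s',s\}}\id_S$ for every $s'\ll s$ in $S$; equivalently, I fix a finite set $F\subseteq S_i$ (pulled back to some finite stage via sup-density of the $\sigma_{i,\infty}(S_i)$ in $S$) and show $\beta\circ\alpha\circ\sigma_{i,\infty}\simeq_F \sigma_{i,\infty}$ after passing to a sufficiently fine refinement. The idea is to telescope: starting from $\sigma_{i,\infty}(F)$, use the left-hand commuting square of \autoref{dfn:atwosided} (which, after composing to the limit via $\sigma_{\psi(k),\infty}$, reads $\sigma_{i,\infty}\simeq_F \sigma_{\psi(k),\infty}\circ\beta_k\circ\tau_{\varphi(j),k}\circ\alpha_j\circ\sigma_{i,j}$) and then compare the right-hand side with $\beta\circ\alpha\circ\sigma_{i,\infty}$ by substituting the approximate identities $\alpha\circ\sigma_{i,\infty}\simeq \tau_{\varphi(j),\infty}\circ\alpha_j\circ\sigma_{i,j}$ and $\beta\circ\tau_{\varphi(j),\infty}\simeq \sigma_{\psi(k),\infty}\circ\beta_k\circ\tau_{\varphi(j),k}$ coming from \autoref{prop:induced}. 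Chaining these three approximate relations produces $\beta\circ\alpha\circ\sigma_{i,\infty}\simeq_{\tilde F}\cdots\simeq_{\tilde F}\sigma_{i,\infty}$, which collapses to $\simeq_F$ once $\tilde F$ is a sufficiently high-order refinement of $F$.

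The main obstacle, and the reason refinements are essential, is precisely the non-transitivity of $\simeq_F$ flagged in the remark before the theorem: each invocation of a commuting square or of \autoref{prop:induced} consumes one ``link'' in a chain of $\simeq_{\tilde F}$-comparisons, and only a chain bounded in length by the refinement order $n$ of $\tilde F$ collapses back to $\simeq_F$. So the careful bookkeeping lies in (a) counting how many approximate steps the telescoping argument uses, (b) fixing at the outset an $n$-refinement $\tilde F$ of $F$ with $n$ at least that count, and (c) choosing the indices $j,j',k,k'$ large enough—via the $i_F,i_G$ guaranteed by \autoref{dfn:atwosided} and \autoref{prop:induced}—that every square in the chain commutes within $\tilde F$ simultaneously. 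Because all comparisons take place at finite stages $S_{\psi(k)}$, $T_{\varphi(j)}$ where the intertwining diagrams commute approximately, no enrichment of the metric is needed.

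Once $\beta\circ\alpha=\id_S$ is established, the identity $\alpha\circ\beta=\id_T$ follows by the symmetric argument, interchanging the roles of $(S_i,\sigma_{i,j})_i$ with $(T_i,\tau_{i,j})_i$ and of $(\alpha_i)_i$ with $(\beta_i)_i$, and using the right-hand commuting square of \autoref{dfn:atwosided}. This gives that $\alpha$ and $\beta$ are mutually inverse $\Cu$-morphisms, so $\alpha\colon S\cong T$ is the desired $\Cu$-isomorphism induced by $(\alpha_i)_i$ with inverse $\beta$ induced by $(\beta_i)_i$, completing the proof.
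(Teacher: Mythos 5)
Your proposal is correct and follows essentially the same route as the paper: obtain $\alpha$ and $\beta$ from the two induced one-sided intertwinings via \autoref{prop:induced}, reduce to $\beta\circ\alpha\circ\sigma_{i,\infty}=\sigma_{i,\infty}$ on each finite set, and chain the same three approximate relations (the induced-morphism comparison for $\alpha$, the one for $\beta$ precomposed with $\alpha_j\circ\sigma_{i,j}$, and the two-sided intertwining square) through a refinement of $F$ to defeat the non-transitivity of $\simeq_F$. The paper makes the bookkeeping concrete by fixing a $2$-refinement for this three-link chain, which is exactly the count your step (a)--(c) calls for.
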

\begin{proof}
Our approach is similar to that of \cite[Theorem 3.16]{C22}, but we proceed with additional care since our setting is more general.

Arguing as in the proof of \autoref{prop:induced}, we know that for any $i\in\N$ the sequences $(\tau_{\varphi(j),\infty}\circ\alpha_j\circ\sigma_{i,j})_j$ and $(\sigma_{\psi(j'),\infty}\circ\beta_{j'}\circ\tau_{i,j'})_{j'}$ are Cauchy, and we denote their respective limits by $\eta_i\colon S_i\longrightarrow T$ and $\nu_i\colon T_i\longrightarrow S$. Furthermore, these limits induce $\Cu$-morphisms $\alpha\colon S\longrightarrow T$ and $\beta\colon T\longrightarrow S$.
In order to show that $\alpha$ and $\beta$ are inverses of one another, it suffices to show that $\beta\circ \eta_i=\sigma_{i,\infty}$ and $\alpha\circ \nu_i=\tau_{i,\infty}$ for any $i\in \N$.

Let $F$ be a finite subset of $S_i$ and let $\tilde{F}$ be a $2$-refinement of $F$.
Since $\eta_i$ is the limit of $(\tau_{\varphi(j),\infty}\circ\alpha_j\circ\sigma_{i,j})_j$, we know that there exists $j\geq i$ big enough such that $\eta_i\simeq_{\tilde{F}} \tau_{\varphi(j),\infty}\circ\alpha_j\circ \sigma_{i,j}$. Post-composing with $\beta$, we obtain
\begin{equation}\label{eq:TwoSid1}
\beta\circ\eta_i\simeq_{\tilde{F}} \nu_{\varphi(j)}\circ\alpha_j\circ \sigma_{i,j}.
\end{equation}
Consider $\tilde{G}:=\alpha_j\circ\sigma_{i,j}(\tilde{F})\subseteq T_{\varphi(j)}$. Since $\nu_{\varphi(j)}$ is the limit of $(\sigma_{\psi(k),\infty}\circ\beta_{k}\circ\tau_{\varphi(j),k})_{k}$, we know that there exists  $k\geq \varphi(j)$ big enough such that $\nu_{\varphi(j)}\simeq_{\tilde{G}} \sigma_{\psi(k),\infty}\circ\beta_{k}\circ\tau_{\varphi(j),k}$. Precomposing by $\alpha_j\circ\sigma_{i,j}$, this implies
\begin{equation}\label{eq:TwoSid2}
\nu_{\varphi(j)}\circ \alpha_j\circ\sigma_{i,j} \simeq_{\tilde{F}} \sigma_{\psi(k),\infty}\circ\beta_{k}\circ\tau_{\varphi(j),k}\circ \alpha_j\circ\sigma_{i,j}.
\end{equation}
Finally, since $(\alpha_i,\beta_i)_i$ is a two-sided approximate intertwining, we also have
\begin{equation}\label{eq:TwoSid3}
 \sigma_{\psi(k),\infty}\circ\beta_{k}\circ\tau_{\varphi(j),k}\circ \alpha_j\circ\sigma_{i,j}\simeq_{\tilde{F}}\sigma_{i,\infty}
\end{equation}
whenever $j$ and $k$ are big enough.

It follows from the construction of $\tilde{F}$ and a combination of (\ref{eq:TwoSid1})-(\ref{eq:TwoSid3}) that $\beta\circ\eta_i\simeq_F \sigma_{i,\infty}$. Since this holds for any finite subset, we must have $\beta\circ\eta_i = \sigma_{i,\infty}$. Therefore, we get $\beta\circ\alpha=\id_S$. The fact that $\alpha\circ\beta=\id_T$ follows from a symmetric argument.
\end{proof}

\begin{prg}[\textbf{Comparison and approximate intertwinings in $\Cus$}]\label{prg:CompAppIntCus} Note that none of the proofs above uses the fact that the ordered monoids under consideration are positively ordered. Thus, all the results in this section are still valid for the larger category $\Cus$ introduced in \cite{C23c} (loosely, this is the category of not necessarily positively ordered $\Cu$-semigroups).

Many refinements of the Cuntz semigroup have $\Cus$ as their target category, and thus are amenable to the techniques developed here. We predict that this will play an important role when one such variant of the Cuntz semigroup is used to classify morphisms between certain $\CatCa$-algebras.
\end{prg}

\subsection{Fra\"{i}ss\'{e} Categories of Cuntz semigroups}
\label{subsec:FraisseCu}
We are now ready to introduce a version of Fra\"{i}ss\'{e} Theory for abstract Cuntz semigroups, analogous to that of \cite{K13}. As stated earlier, we use  finite-set comparison of $\Cu$-morphisms to bypass the need of enriched metrics.


\begin{dfn}\label{dfn:cufraissedfn}
Let $\cc$ be a subcategory of $\Cu$. We say that $\cc$
\begin{itemize}
 \item[(JEP$_{\Cu}$)] satisfies the \emph{(Cuntz) joint embedding property} if, for any $A_1,A_2\in \cc$, there exists $B\in \cc$ such that both $\Hom_\cc(A_1,B)$ and $\Hom_\cc(A_2,B)$ are nonempty.
 \item[(NAP$_{\Cu}$)] satisfies the \emph{(Cuntz) near amalgamation property} if, for any pair of $\cc$-morphisms $\alpha_1\in \Hom_\cc(A,B_1)$ and $\alpha_2\in \Hom_\cc(A,B_2)$, and any finite subset $F\subseteq A$, there exist $C\in \cc$ and $\cc$-morphisms $\beta_1\in \Hom_\cc(B_1,C)$ and $\beta_2\in \Hom_\cc(B_2,C)$ such that $\beta_1\circ\alpha_1\simeq_F \beta_2\circ\alpha_2$.
 \item[(SEP$_{\Cu}$)] is \emph{(Cuntz) separable} if there exists a countable dominating subcategory $\mathfrak{s}\subseteq \cc$, that is,
 \begin{itemize}
 \item[$\bullet$] any object $S\in \mathfrak{s}$ is a countably-based $\Cu$-semigroup.
 \item[$\bullet$] the set of $\mathfrak{s}$-morphisms is countable.
  \item[$\bullet$] for any $A\in \cc$ there exist $S\in \mathfrak{s}$ such that $\Hom_\cc(A,S)$ is nonempty.
  \item[$\bullet$] for any $\cc$-morphism $\sigma\colon S\longrightarrow A$ with $S\in \mathfrak{s}$ and any finite subset $F\subseteq S$, there exists $T\in\mathfrak{s}$ and $\alpha\in\Hom_\mathfrak{c}(A,T)$ and $\tau\in\Hom_\mathfrak{s}(S,T)$ such that $\alpha \circ\sigma\simeq_F \tau$. \end{itemize}
\end{itemize}

We say that $\cc$ is a \emph{(Cuntz) Fra\"{i}ss\'{e} category} if $\cc$ satisfies (JEP$_{\Cu}$), (NAP$_{\Cu}$) and (SEP$_{\Cu}$).
\end{dfn}

Next, we define a notion of \emph{(Cuntz) Fra\"{i}ss\'{e} sequences} and show that any (Cuntz) Fra\"{i}ss\'{e} category admits such a sequence, which is unique up to two-sided approximate intertwining.

\begin{dfn}\label{dfn:cufraisseseqdfn}
Let $\cc$ be a subcategory of $\Cu$. An inductive sequence $(S_i,\sigma_{i,j})_{i\in\N}$ is called a \emph{(Cuntz) Fra\"{i}ss\'{e} sequence} if
\begin{itemize}
\item[(i)] Every $S_i$ is a countably-based $\Cu$-semigroup.
\item[(ii)] For any finite subset $F\subseteq S_i$ and any $\cc$-morphism $\alpha\colon S_i\longrightarrow C$, there exists a $\cc$-morphism $\beta_F\colon C\longrightarrow S_j$ for some $j\geq i$ such that $\beta_F\circ\alpha\simeq_F\sigma_{i,j}$.
\end{itemize} 
\end{dfn}

\begin{thm}[Existence and Uniqueness]
\label{thm:existenceuniqueness}
Let $\mathfrak{c}\subseteq \Cu$ be a Fra\"{i}ss\'{e} category. Then $\mathfrak{c}$ admits a Fra\"{i}ss\'{e} sequence which is unique up to two-sided approximate intertwining in $\cc$. 
\end{thm}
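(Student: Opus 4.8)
The plan is to prove existence and uniqueness separately, mirroring the classical Fra\"iss\'e argument of \autoref{prp:FraisseCathasSeq} while replacing metric comparison by finite-set comparison and the enrichment conditions by the results of \autoref{thm:UniLimComp} and \autoref{thm:2SidIntCu}. For \emph{existence}, the strategy is a standard back-and-forth bookkeeping construction using the countable dominating subcategory $\mathfrak{s}$ provided by (SEP$_{\Cu}$). Since the set of $\mathfrak{s}$-morphisms is countable, I would enumerate all pairs $(\sigma,F)$ consisting of an $\mathfrak{s}$-morphism $\sigma\colon S\longrightarrow A$ and a finite subset $F$ of $S$ drawn from a fixed countable sup-dense basis of $S$; there are only countably many such pairs. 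I then build the sequence $(S_i,\sigma_{i,j})_i$ inside $\mathfrak{s}$ inductively, at each stage addressing the next enumerated pair. Given the current object $S_i$ and a request $(\alpha\colon S_i\longrightarrow C, F)$, I first use the third bullet of (SEP$_{\Cu}$) to replace $\alpha$ (up to $\simeq_F$) by an $\mathfrak{s}$-morphism into some $T\in\mathfrak{s}$, and then use (JEP$_{\Cu}$) and (NAP$_{\Cu}$) to amalgamate $T$ with $S_i$ over $F$, producing the next term $S_{i+1}$ together with the required $\beta_F$ satisfying $\beta_F\circ\alpha\simeq_F\sigma_{i,i+1}$. A diagonal enumeration ensures every pair $(\alpha,F)$ arising along the way is eventually served, which yields property (ii) of \autoref{dfn:cufraisseseqdfn}; property (i) is automatic since every $S_i$ lies in $\mathfrak{s}$ and is thus countably-based.

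For \emph{uniqueness}, suppose $(S_i,\sigma_{i,j})_i$ and $(T_i,\tau_{i,j})_i$ are two Fra\"iss\'e sequences in $\cc$. The goal is to construct a two-sided approximate intertwining $(\alpha_i,\beta_i)_i$ between them, so that \autoref{thm:2SidIntCu} delivers a $\Cu$-isomorphism $S\cong T$. I would build the two families of morphisms by a back-and-forth induction, alternately invoking the Fra\"iss\'e property (ii) of each sequence. Concretely, one starts with any morphism $\alpha_0\colon S_0\longrightarrow T_{\varphi(0)}$ (which exists after possibly using (JEP$_{\Cu}$)), then feeds $\alpha_0$ into the Fra\"iss\'e property of the $T$-sequence to obtain $\beta$ going back into the $S$-sequence approximating $\tau$ on a prescribed finite set, then feeds that into the Fra\"iss\'e property of the $S$-sequence, and so on. At each step one chooses the comparison finite set to be large enough, and in fact one passes to $n$-refinements as in the paragraph preceding \autoref{thm:2SidIntCu}, since $\simeq_F$ is not transitive and the commutativity estimates of \autoref{dfn:atwosided} must be chained.

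The bookkeeping must guarantee that the resulting diagrams approximately commute on \emph{every} finite set in the limit, not just on the single sets used at each stage. I would handle this with the now-standard device of fixing, for each $S_i$ and each $T_i$, an exhausting increasing sequence of finite subsets of a countable sup-dense basis, and arranging the enumeration so that for each such finite set $F\subseteq S_i$ the intertwining condition $\sigma_{i,\psi(k)}\simeq_F\beta_k\circ\tau_{\varphi(j),k}\circ\alpha_j\circ\sigma_{i,j}$ of \autoref{dfn:atwosided} is secured from some stage onward, and symmetrically for the $T$-side. Because each $S_i$ is countably-based, finitely many such conditions need attention at each stage, and a diagonal argument closes the gap.

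The main obstacle, and the place where genuine care is required rather than routine verification, is the non-transitivity of $\simeq_F$. Unlike a metric, where the triangle inequality lets one freely chain approximate identities with controlled error, here each composition step only preserves comparison after refining the finite set. The correct handling is to decide in advance how many chaining steps each stage will involve, pick an $n$-refinement $\tilde{F}$ of the target set $F$ with $n$ large enough to absorb all intermediate comparisons, and carry out all estimates at the level of $\tilde{F}$, collapsing back to $F$ only at the end via the refinement implication $\alpha\simeq_{\tilde F}\alpha_1\simeq_{\tilde F}\cdots\simeq_{\tilde F}\beta\ \Rightarrow\ \alpha\simeq_F\beta$. Getting this refinement bookkeeping consistent across the whole back-and-forth construction --- so that the finite sets used on the $S$-side and the $T$-side, their images under the various $\alpha_j$ and $\beta_k$, and their refinements all fit together --- is the technical heart of the argument; everything else is a transcription of Kubi\'s's proof with \autoref{thm:2SidIntCu} standing in for the metric two-sided intertwining theorem.
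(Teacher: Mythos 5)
Your proposal follows essentially the same route as the paper's proof: existence via meeting countably many requirements inside the dominating subcategory $\mathfrak{s}$ (the paper phrases your diagonal enumeration as an application of the Rasiowa--Sikorski lemma to the poset of finite $\mathfrak{s}$-sequences, then upgrades from $\mathfrak{s}$ to $\cc$ by domination), and uniqueness via an alternating back-and-forth construction of a two-sided approximate intertwining in which the finite sets are chosen as refinements of all path-images of the previously used sets, exactly to absorb the non-transitivity of $\simeq_F$ that you correctly identify as the technical heart. The only cosmetic slips --- which sequence's Fra\"{i}ss\'{e} property is invoked at each alternating step, and the fact that the theorem's conclusion is the intertwining itself rather than the isomorphism of limits --- do not affect the argument.
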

\begin{proof}
The proof that such a sequence exists is analogous to that of \cite[Theorem 3.3]{K13}. We reproduce the proof here with our language of finite-set comparison for the sake of completeness. In contrast, the uniqueness part of the proof differs from \cite{K13}.

[Existence]  Without loss of generality, we may assume that the countable dominating subcategory $\mathfrak{s}\subseteq \cc$ satisfies the joint embedding property and the near amalgamation property. 

Now consider the partially ordered set $\mathcal{S}:=\{\text{finite inductive sequences in } \mathfrak{s}\}$ with the end-extension order, i.e. $(S_i,\sigma_{i,j})_{i,j\leq n}\leq (T_i,\tau_{i,j})_{i,j\leq m}$ in $\mathrm{S}$ whenever $n\leq m$ and $(T_i,\tau_{i,j})_{i,j\leq n}=(S_i,\sigma_{i,j})_{i,j\leq n}$. For any $S\in \mathfrak{s}$, fix a basis $B:=\bigcup\limits_k B_k$ such that $(B_k)_k$ is a $\subseteq$-increasing sequence of finite sets of $S$.
For any $\mathfrak{s}$-morphism $\alpha\colon S\longrightarrow T$ and $n,k\in\N$, we let $D_{n,\alpha,k}$ be the subset of $\mathrm{S}$ of all elements $(S_i,\sigma_{i,j})_{i,j\leq m}$ satisfying the following
\begin{itemize}
 	\item[$\bullet$] $m>n$.
 	\item[$\bullet$] $\Hom_\mathfrak{s}(S,S_i)\neq \emptyset$ for some $i$.
  	\item[$\bullet$] if $S=S_n$, then there exist $j>n$ and $\beta\in\Hom_\mathfrak{s}(T,S_j)$ such that $\beta\circ\alpha\simeq_{B_k}\sigma_{n,j}$.
\end{itemize}
Using the joint embedding property and the near amalgamation property, it is readily checked that all sets of the form $D_{n,\alpha,k}$ are cofinal in $\mathcal{S}$ with respect to the end-extension order, that is, for any triple $n,\alpha ,k$ and any $s\in\mathcal{S}$ there exists  $d\in D_{n,\alpha,k}$ such that $s\leq d$.

Arguing similarly as in the proof of \cite[Theorem 3.3]{K13}, let us fix an ordering $\varphi\colon \N\longrightarrow\{(n,\alpha,k)\mid n,k\in\N,\, \alpha\in\mathfrak{s}\}$. Then, we can use the Rasiowa-Sikorski lemma to find a $\leq$-increasing sequence $(c_l)_l$ where $c_l\in D_{\varphi(l)}$. Note that the supremum of $(c_l)_l$ is in fact a well-defined inductive sequence in $\mathfrak{s}$, which we write as $(S_i,\sigma_{i,j})_i$. By construction, $(S_i,\sigma_{i,j})_i$ is Fra\"{i}ss\'{e} for $\mathfrak{s}$. Finally, arguing again as in the proof of \cite[Theorem 3.3]{K13}, we deduce that $(S_i,\sigma_{i,j})_i$  is in fact a Fra\"{i}ss\'{e} sequence for $\cc$.

[Uniqueness] Let $(S_i,\sigma_{i,j})_i$ and $(T_i,\tau_{i,j})_i$ be two Fra\"{i}ss\'{e} sequences in $\cc$. We are going to recursively construct a two-sided approximate intertwining between them.

 First, recall that all the $\Cu$-semigroups involved are countably-based (by definition). Therefore, for each $i$, we can fix two $\subseteq$-increasing sequences $(B^i_n)_n, (C^i_n)_n$ of finite subsets of $S_i,T_i$ respectively, such that their unions over $n$ are sup-dense in $S_i$ and $T_i$. Now, using the joint embedding property first, and then the Fra\"{i}ss\'{e} sequence property twice, we construct the maps $\beta_0$ and $\alpha_{\psi(0)}$ which make the following diagram approximately commute.
\[
   \xymatrix{
     S_0\ar[rr]\ar[rd]_{}&&S_{\psi(0)}\ar[ddr]^{\alpha_{\psi(0)}} \\
     &V_0\ar[ru] \ar@{}[u]|-{\simeq_{B_0^0}} &\\
     T_0\ar[rrr] \ar[ru]\ar@{-->}@/^{-1pc}/[rruu]_{\beta_0}&&  \ar@{}[uu]|-(.3){\simeq_{C_0^0}} &T_{\varphi(0)}
 }  
\]
Our aim is to construct strictly increasing map $\psi ,\varphi\colon \N\longrightarrow \N$ together with large enough subsets $\tilde{B}_{\psi(i)}, \tilde{C}_{\varphi(i)}$ of $S_{\psi(i)},T_{\varphi(i)}$ and $\cc$-morphisms $\beta_{\varphi(i)}\colon T_{\varphi(i)}\longrightarrow S_{\psi(i+1)}$ and $\alpha_{\psi(i+1)}\colon S_{\psi(i+1)}\longrightarrow T_{\varphi(i+1)}$ producing the following approximately commutative diagram.
\[
   \xymatrix{
&&\ldots\ar[r]
 	&S_{\psi(i)}\ar[rr]\ar[ddr]_{\alpha_{\psi(i)}} & \ar@{}[dd]|-(.2){\simeq_{\tilde{B}_{\psi(i)}}} &S_{\psi(i+1)}\ar[rdd]^{\alpha_{\psi(i+1)}}
 \ar[rr]&&\ldots
\\
&&& && &&&
&&
\\
&&\ldots\ar[rr]
	&& T_{\varphi(i)}\ar[rr]\ar[ruu]^(.4){\beta_{\varphi(i)}}&  \ar@{}[uu]|-(.3){\simeq_{\tilde{C}_{\varphi(i)}}} &T_{\varphi(i+1)}
\ar[r]&\ldots
}  
\] 
To do this, we proceed by induction from the initial data $B_0^0,C_0^0, \beta_0,\alpha_{\psi(0)}$. Assume that 
\begin{itemize}
\item the numbers $\psi(0),\varphi(0),\ldots,\psi(i),\varphi(i)$
\item the finite sets $B_0^0, C_0^0,\ldots,\tilde{B}_{\psi(i-1)}, \tilde{C}_{\varphi(i-1)}$
\item  the $\cc$-morphisms $\beta_0,\alpha_{\psi(0)},\ldots,\beta_{\varphi(i-1)},\alpha_{\psi(i)}$
\end{itemize}
have been constructed for some $i\geq 0$. (By convention, we have fixed $\psi(-1)=\varphi(-1)=0$ and  $\tilde{B}_{0}:=B_0^0, \tilde{C}_{0}:=C_0^0$.)

In what follows, a \emph{path} is any $\Cu$-morphism in the above diagram that can be expressed as the composition of finitely many maps among $\sigma_{j,k}$, $\tau_{j,k}$, $\alpha_{\psi (j)}$ and $\beta_{\varphi (j)}$. Let us start by choosing $\tilde{B}_{\psi(i)}\subseteq S_{\psi(i)}$ and $\tilde{C}_{\varphi(i)}\subseteq T_{\varphi(i)}$ such that
\begin{itemize}
 	\item[(i)] $B_{\psi(i)}^{\psi(i)}\subseteq \tilde{B}_{\psi(i)}$ and $C_{\varphi(i)}^{\varphi(i)}\subseteq \tilde{C}_{\varphi(i)}$.
 	\item[(ii)] $\tilde{B}_{\psi(i)}$  refines $\left\{ \pi (b)\in S_{\psi(i)} \mid b\in \bigcup_{0<l\leq i}\left(\tilde{B}_{\psi(i-l)}\cup \tilde{C}_{\varphi(i-l)}\right),\, \pi\text{ a path} \right\}$.
	\item[(iii)] $\tilde{C}_{\varphi(i)}$ refines $\left\{ \pi (c)\in T_{\varphi(i)} \mid c\in \bigcup_{0<l\leq i}\left(\tilde{B}_{\psi(i-l)}\cup \tilde{C}_{\varphi(i-l)}\right),\, \pi\text{ a path} \right\}$.
 \end{itemize}

We apply successively the Fra\"{i}ss\'{e} sequence property twice. First, we obtain an index $\psi(i+1)>\psi(i)$ together with a $\cc$-morphism $\beta_{\varphi(i)}\colon T_{\varphi(i)}\longrightarrow S_{\psi(i+1)}$ such that 
\[
	\beta_{\varphi(i)}\circ \alpha_{\psi(i)}\simeq_{\tilde{B}_{\psi(i)}} \sigma_{\psi(i),\psi(i+1)}
\]
and then we get an index $\varphi (i+1)>\varphi (i)$ together with a $\cc$-morphism $\alpha_{\psi (i+1)}\colon S_{\psi (i+1)}\longrightarrow T_{\varphi (i+1)}$ such that
\[
	\alpha_{\psi (i+1)}\circ\beta_{\varphi(i)}\simeq_{\tilde{C}_{\varphi(i)}} \tau_{\varphi (i),\varphi (i+1)}
\]
which finishes the inductive argument.

We will now check that the sequences of $\cc$-morphisms that we have just constructed induce a two-sided approximate intertwining. Let us first prove the following.

\textbf{Claim.} Let $i,l\in\N$. For any elements $b^-,b,b^+\in \tilde{B}_{\psi(i)}$ such that $ b^-\ll b\ll b^+$, we have
\[
\left\{
\begin{array}{ll}
\sigma_{\psi(i),\psi(i+l+1)}(b^-)\leq \beta_{\varphi(i+l)}\circ\tau_{\varphi(i),\varphi(i+l)}\circ\alpha_{\psi(i)}(b^+)\\
 \beta_{\varphi(i+l)}\circ\tau_{\varphi(i),\varphi(i+l)}\circ\alpha_{\psi(i)}(b^-)\leq\sigma_{\psi(i),\psi(i+l+1)}(b^+)
\end{array}
\right.
\]
\emph{Proof of the Claim}. Note that $\sigma_{\psi(i),\psi(i+1)}(b^-)\leq \beta_{\varphi(i)}\circ\alpha_{\psi(i)}(b)$. Moreover, it follows from the construction of $\tilde{B}_{\psi(i+1)}$ (see (ii) above) that there exists $b_2\in \tilde{B}_{\psi(i+1)}$ such that $\sigma_{\psi(i),\psi(i+1)}(b^-)\ll b_2 \ll \beta_{\varphi(i)}\circ\alpha_{\psi(i)}(b)$. Thus, we have that
\begin{align*}
\sigma_{\psi(i),\psi(i+2)}(b^-)&\leq \beta_{\varphi(i+1)}\circ\alpha_{\psi(i+1)}(b_2)\\
&\leq \beta_{\varphi(i+1)}\circ\alpha_{\psi(i+1)}(\beta_{\varphi(i)}\circ\alpha_{\psi(i)}(b)).
\end{align*}
Proceeding successively in this fashion, we obtain
\begin{equation}\label{eq:FraUni1}
	\sigma_{\psi(i),\psi(i+l+1)}(b^-)\leq (\beta_{\varphi(i+l)}\circ\alpha_{\psi(i+l)})\circ \ldots \circ(\beta_{\varphi(i)}\circ\alpha_{\psi(i)})(b).
\end{equation}
A similar argument involving the pair $\alpha_{\psi(i)}(b)$ and $\alpha_{\psi(i)}(b^+)$ shows that 
\begin{equation}\label{eq:FraUni2}
(\alpha_{\psi(i+l)}\circ\beta_{\varphi(i+l-1)})\circ \ldots \circ(\alpha_{\psi(i+1)}\circ\beta_{\varphi(i)})(\alpha_{\psi(i)}(b))\leq \tau_{\varphi(i),\varphi(i+l)}(\alpha_{\psi(i)}(b^+)).
\end{equation}
Post-composing (\ref{eq:FraUni2}) by $\beta_{\varphi(i+l)}$ and combining it with (\ref{eq:FraUni1}) gives us the first inequality of the claim. The other inequality follows from a symmetric argument, which proves the claim.

Finally, let $F$ be a finite subset of $S_i$. From the construction of the $\tilde{B}_{\psi(i)}$'s (see (i) above), there exists an index $i_F\geq i$ such that, for any $f',f\in F$ with $f'\ll f$ and any $j\geq i_F$, we can find $b^-,b,b^+,\in \tilde{B}_{\psi(j)}$ such that $\sigma_{i,\psi(j)}(f')\ll b^-\ll b\ll b^+\ll \sigma_{i,\psi(j)}(f)$. Applying the claim, we get
\[
\left\{
\begin{array}{ll}
\sigma_{i,\psi(k+1)}(f')\leq \beta_{\varphi(k)}\circ\tau_{\varphi(j),\varphi(k)}\circ\alpha_{\psi(j)}\circ\sigma_{i,\psi(j)}(f)\\
\beta_{\varphi(k)}\circ\tau_{\varphi(j),\varphi(k)}\circ\alpha_{\psi(j)}\circ\sigma_{i,\psi(j)}(f')
\leq \sigma_{i,\psi(k+1)}(f)\end{array}
\right.
\]
for any $j\geq i_F$ and any $k\geq j+1$. In other words, $\sigma_{i,\psi(k+1)}\simeq_F \beta_{\varphi(k)}\circ\tau_{\varphi(j),\varphi(k)}\circ\alpha_{\psi(j)}\circ\sigma_{i,\psi(j)}$. Setting $\alpha_j':=\alpha_{\psi(j)}\circ\sigma_{j,\psi(j)}$ and $\beta_k':=\beta_{\varphi(k)}\circ\tau_{k,\varphi(k)}$, we obtain 
\[
	\sigma_{i,\psi(k+1)}\simeq_F \beta_{k}'\circ\tau_{\varphi(j),k}\circ\alpha_{j}'\circ\sigma_{i,j}.
\]
The second property of \autoref{dfn:atwosided} is proved by using a symmetric argument, which implies that $(\alpha_j')_j$ and $(\beta_k')_k$ are two-sided approximate intertwining.
 \end{proof}

The following technical lemmas about Fra\"{i}ss\'{e} sequences will be needed to obtain our version of  \autoref{thm:fraisselim}.

\begin{lma}[Universality]
\label{lma:universality}
Let $\cc\subseteq \Cu$ be a Fra\"{i}ss\'{e} category. Let $(S_i,\sigma_{i,j})_{i\in\N}$ be a Fra\"{i}ss\'{e} sequence and let  $(T_i,\tau_{i,j})_{i\in\N}$ be a $\cc$-inductive sequence of countably-based $\cc$-objects. 

Then, there exists a one-sided approximate intertwining $(\alpha_i\colon T_i\longrightarrow S_{\varphi(i)})_i$.
\end{lma}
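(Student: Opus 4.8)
The plan is to mimic the recursive construction in the uniqueness part of \autoref{thm:existenceuniqueness}, but in a one-sided fashion and with the arbitrary countably-based source sequence $(T_i,\tau_{i,j})_i$ in place of a second Fra\"{i}ss\'{e} sequence. Since every $T_i$ is countably-based, I first fix, for each $i$, a $\subseteq$-increasing sequence $(C^i_n)_n$ of finite subsets of $T_i$ whose union is sup-dense. To start, I use (JEP$_{\Cu}$) on $T_0$ and $S_0$ to obtain $B\in\cc$ together with $\cc$-morphisms $g\colon T_0\longrightarrow B$ and $h\colon S_0\longrightarrow B$; applying the Fra\"{i}ss\'{e} sequence property (\autoref{dfn:cufraisseseqdfn}) to $h$ yields a $\cc$-morphism $\beta\colon B\longrightarrow S_{\varphi(0)}$ for some $\varphi(0)\geq 0$, and I set $\alpha_0:=\beta\circ g\colon T_0\longrightarrow S_{\varphi(0)}$.

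For the inductive step, suppose $\alpha_i\colon T_i\longrightarrow S_{\varphi(i)}$ has been built; I produce $\alpha_{i+1}$ so that the consecutive square approximately commutes on a prescribed finite set. Given a finite set $\hat C_i\subseteq T_i$ and a $2$-refinement $\tilde C_i$ of it, I apply (NAP$_{\Cu}$) to the pair $\alpha_i\colon T_i\longrightarrow S_{\varphi(i)}$ and $\tau_{i,i+1}\colon T_i\longrightarrow T_{i+1}$ with the finite set $\tilde C_i$, obtaining $C\in\cc$ and $\cc$-morphisms $\gamma\colon S_{\varphi(i)}\longrightarrow C$, $\delta\colon T_{i+1}\longrightarrow C$ with $\gamma\circ\alpha_i\simeq_{\tilde C_i}\delta\circ\tau_{i,i+1}$. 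Since $\gamma$ is a morphism out of the Fra\"{i}ss\'{e} sequence, the Fra\"{i}ss\'{e} sequence property applied to $\gamma$ with a finite set $G\supseteq\alpha_i(\tilde C_i)$ provides an index $\varphi(i+1)>\varphi(i)$ and a $\cc$-morphism $\epsilon\colon C\longrightarrow S_{\varphi(i+1)}$ with $\epsilon\circ\gamma\simeq_G\sigma_{\varphi(i),\varphi(i+1)}$. Setting $\alpha_{i+1}:=\epsilon\circ\delta$, post-composing the (NAP$_{\Cu}$) estimate by $\epsilon$ and pre-composing the Fra\"{i}ss\'{e} estimate by $\alpha_i$ (legitimate since $\alpha_i$ preserves $\ll$ and $\alpha_i(\tilde C_i)\subseteq G$) gives $\alpha_{i+1}\circ\tau_{i,i+1}\simeq_{\tilde C_i}\epsilon\circ\gamma\circ\alpha_i\simeq_{\tilde C_i}\sigma_{\varphi(i),\varphi(i+1)}\circ\alpha_i$, whence $\alpha_{i+1}\circ\tau_{i,i+1}\simeq_{\hat C_i}\sigma_{\varphi(i),\varphi(i+1)}\circ\alpha_i$ because $\tilde C_i$ refines $\hat C_i$. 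As in the uniqueness proof, I choose the $\hat C_i$ (and hence $\tilde C_i$) large enough that $C^i_i\subseteq\hat C_i$ and that $\tilde C_i$ refines the set of all path-images $\pi(c)$ of the previously chosen finite sets under the composites $\tau_{\cdot,\cdot}$ appearing in the diagram; this is precisely what makes the telescoping below work.

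The core of the argument is then a telescoping Claim, in the spirit of the Claim in \autoref{thm:existenceuniqueness}: for all $i,l\in\N$ and all $c^-,c,c^+\in\tilde C_i$ with $c^-\ll c\ll c^+$, one has $\alpha_{i+l}\circ\tau_{i,i+l}(c^-)\leq\sigma_{\varphi(i),\varphi(i+l)}\circ\alpha_i(c^+)$ together with the symmetric inequality. I prove this by induction on $l$: the case $l=1$ is the consecutive commutation just constructed, and for the inductive step I exploit the triple $c^-\ll c\ll c^+$ to upgrade the estimate to a \emph{strict} inequality and then insert an intermediate element supplied by the refinement property of $\tilde C_{i+l}$, exactly as the element $b_2$ is inserted in the proof of \autoref{thm:existenceuniqueness}; post-composition with the relevant $\sigma_{\cdot,\cdot}$ preserves the estimate throughout. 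Granting the Claim, fix $i$ and a finite $F\subseteq T_i$. Using sup-density of the $C^j_n$ and the accumulation of refinements, I find an index $i_F\geq i$ such that every pair $x'\ll x$ in $F$ can be bracketed as $\tau_{i,i_F}(x')\ll c^-\ll c\ll c^+\ll\tau_{i,i_F}(x)$ with $c^-,c,c^+\in\tilde C_{i_F}$; the Claim then yields $\alpha_k\circ\tau_{i,k}\simeq_F\sigma_{\varphi(i),\varphi(k)}\circ\alpha_i$ for every $k\geq i_F$.

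Finally, this last estimate gives the one-sided approximate intertwining of \autoref{dfn:OneSidInt}: for $j\geq i_F$ and $k\geq j$, taking $\hat F$ a $2$-refinement of $F$ and applying the estimate with $\hat F$ both at $k$ and at $j$ (post-composing the latter by $\sigma_{\varphi(j),\varphi(k)}$), both $\alpha_k\circ\tau_{i,k}$ and $\sigma_{\varphi(j),\varphi(k)}\circ\alpha_j\circ\tau_{i,j}$ are $\simeq_{\hat F}$ to $\sigma_{\varphi(i),\varphi(k)}\circ\alpha_i$, so they compare on $F$. I expect the main obstacle to be the bookkeeping in the telescoping Claim --- keeping the inequalities \emph{strict} through each level so that the refinement of $\tilde C_{i+l}$ can supply the intermediate elements --- rather than any conceptual difficulty; this is the one place where the $n$-refinement machinery and the choice of the finite sets $\tilde C_i$ must be handled with care.
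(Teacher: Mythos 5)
Your proposal is correct and follows essentially the same route as the paper's proof: the same recursive construction (NAP$_{\Cu}$ into an auxiliary amalgam $C=V_{i+1}$ followed by the Fra\"{i}ss\'{e} sequence property to return to $(S_i)_i$, with your $\gamma,\delta,\epsilon$ playing the roles of $\xi^u_{i+1},\theta_{i+1},\xi^d_{i+1}$), the same choice of refined finite sets $\tilde C_i$ absorbing all earlier path-images, the same telescoping Claim, and the same bracketing argument to conclude. The only deviations are cosmetic --- the paper's Claim carries four $\ll$-chained elements rather than three (one extra to feed the refinement insertion at each level, exactly the bookkeeping you flag), and in the last step the paper applies the Claim directly at base index $j$ instead of comparing both composites to $\sigma_{\varphi(i),\varphi(k)}\circ\alpha_i$ --- neither of which affects correctness.
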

\begin{proof}
We are going to construct the desired one-sided approximate intertwining recursively following a similar argument to that of \autoref{thm:existenceuniqueness}.

Recall that all the $\Cu$-semigroups involved are countably-based by definition. Therefore, for each $i$, we can fix a $\subseteq$-increasing sequence $(B^i_n)_n$ of finite subsets of $T_i$ such that its unions over $n$ is sup-dense in $T_i$. Now, using the joint embedding property together with the fact that $(S_i,\sigma_{i,j})_i$ is a Fra\"{i}ss\'{e} sequence, we construct a $\cc$-morphism $\alpha_0\colon T_0\longrightarrow S_{\varphi(0)}$.
\[
   \xymatrix{
    	& T_0\ar@{.>}@/^{1,5pc}/[dd]^{\alpha_0}\ar[d]_{}\\
	&V_0\ar[d]\\
 	S_0\ar[r]\ar[ru]&S_{\varphi(0)}
 }  
\]

Our aim is to construct a strictly increasing map $\varphi\colon \N\longrightarrow \N$ together with large enough subsets $\tilde{B}_{i}$ of $T_{i}$ and $\cc$-morphisms $\alpha_{i+1}\colon T_{i+1}\longrightarrow S_{\varphi(i+1)}$ such that $\sigma_{\varphi(i),\varphi(i+1)}\circ \alpha_i\simeq_{\tilde{B_i}} \alpha_{i+1}\circ \tau_{i,i+1}$.
To do this, we proceed by induction from the initial data $B_0^0,\alpha_0$. Assume that 
\begin{itemize}
\item the numbers $\varphi(0),\ldots,\varphi(i)$
\item the finite sets $B_0^0,\tilde{B}_{1}, \ldots,\tilde{B}_{i-1}$
\item  the $\cc$-morphisms $\alpha_0,\ldots,\alpha_{i}$
\end{itemize}
have been constructed for some $i\geq 0$. (By convention, we have fixed $\tilde{B}_{-1}:=B_0^0$.)

Let us start by choosing $\tilde{B}_{i}\subseteq T_{i}$ such that
\begin{itemize}
 	\item[(i)] $B_{i}^{i}\subseteq \tilde{B}_{i}$.
	\item[(ii)] $\tilde{B}_{i}$ is a $2$-refinement of $\left\{ \tau_{i-l,i}(b)\in T_i \mid b\in \bigcup_{0<l\leq i}\tilde{B}_{i-l} \right\}$.
\end{itemize}
We then use the near amalgamation property to construct $\cc$-morphisms $\xi_{i+1}^u,\theta_{i+1}$ such that the quadrilateral in the diagram below approximately commutes within $\tilde{B}_{i}$. Lastly, we use the Fra\"{i}ss\'{e} sequence property to get a $\cc$-morphism $\xi_{i+1}^d$ such that the triangle underneath approximately commutes within $\alpha_{i}(\tilde{B}_{i})$. Define $\alpha_{i+1}:=\xi_{i+1}^d\circ\theta_{i+1}$. 
\[
   \xymatrix{
	T_{i}\ar[rr] \ar[dd]_{\alpha_{i}}& \ar@{}[dd]|-(.3){\simeq_{\tilde{B}_{i}}} &T_{i+1}\ar@{.>}@/^{2,5pc}/[dd]^{\alpha_{i+1}}\ar@{.>}[d]^{\theta_{i+1}}\\
	&&	V_{i+1}\ar@{.>}[d]^{\xi^d_{i+1}}\\
 	S_{\varphi(i)}\ar[rr] \ar@{.>}[rru]_{}^{\xi^u_{i+1}}&&S_{\varphi(i+1)} \ar@{}[llu]|-(.15){\simeq_{\alpha_{i}(\tilde{B}_{i})}}
 }  
\]
Hence, we have obtained a sequence of $\cc$-morphisms $(\alpha_i\colon T_i\longrightarrow S_{\varphi(i)})_i$, and we are left to show that this is a one-sided approximate intertwining.

\textbf{Claim.} Let $i,j\in\N$. For any elements $b',b^-_1,b^+_1,b\in \tilde{B}_{i}$ such that $b'\ll b^-_1\ll b^+_1\ll b$, we have
\[
\left\{
\begin{array}{ll}
\sigma_{\varphi(i),\varphi(i+j)}\circ\alpha_{i}(b')\leq\alpha_{i+j}\circ\tau_{i,i+j}(b)\\
\alpha_{i+j}\circ\tau_{i,i+j}(b')\leq \sigma_{\varphi(i),\varphi(i+j)}\circ\alpha_{i}(b)
\end{array}
\right.
\]
\emph{Proof of the claim}. Note that
\[
\sigma_{\varphi(i),\varphi(i+1)}\circ\alpha_i(b')\leq
 \xi^d_{i+1}\circ\xi^u_{i+1}\circ\alpha_i(b^-_1)\leq
 \xi^d_{i+1}\circ\theta_{i+1}\circ\tau_{i,i+1}(b^+_1)=
 \alpha_{i+1}\circ\tau_{i,i+1}(b^+_1).
\]
From the construction of the $\tilde{B}_{i}$'s (see (ii) above), we know that we can find $b^-_2,b^+_2\in \tilde{B}_{i+1}$ such that $\tau_{i,i+1}(b^+_1)\ll b^-_2 \ll b^+_2\ll \tau_{i,i+1}(b)$. Therefore, we have 
\[
\sigma_{\varphi(i+1),\varphi(i+2)}\circ\alpha_{i+1}\circ\tau_{i,i+1}(b^+_1)\leq\xi^d_{i+2}\circ\xi^u_{i+2}\circ\alpha_{i+1}(b^-_2)\leq \alpha_{i+2}\circ\tau_{i+1,i+2}(b^+_2).
\]

Proceeding successively in this fashion, we obtain elements $b^+_1,b^+_2,\ldots ,b^+_{j}$ such that $b^+_l\in \tilde{B}_{i+l-1}$, and $b^+_l\ll \tau_{i,i+l-1}(b)$ for any $l\leq j$. We compute
\begin{align*}
\sigma_{\varphi(i),\varphi(i+1)}\circ\alpha_i(b')&\leq \alpha_{i+1}\circ\tau_{i,i+1}(b_1^+)\\
\sigma_{\varphi(i+1),\varphi(i+2)}\circ\alpha_{i+1}(\tau_{i,i+1}(b_1^+))&\leq \alpha_{i+2}\circ\tau_{i+1,i+2}(b^+_2)\\
\sigma_{\varphi(i+2),\varphi(i+3)}\circ\alpha_{i+2}(\tau_{i+1,i+2}(b^+_2))&\leq \alpha_{i+3}\circ\tau_{i+2,i+3}(b^+_3)\\
&\,\,\,\,\vdots\\
\sigma_{\varphi(i+j-1),\varphi(i+j)}\circ\alpha_{i+j-1}(\tau_{i+j-2,i+j-1}(b^+_{j-1}))&\leq \alpha_{i+j}\circ\tau_{i+j-1,i+j}(b^+_j).
\end{align*} 
This proves the first inequality of the claim. The other inequality is shown using a symmetric argument.

Finally let $F$ be a finite subset of $T_i$. From the construction of the $\tilde{B}_{i}$'s (see (i) above), there exists an index $i_F\geq i$ such that, for any $j\geq i_F$ and any pair $f',f\in F$ with $f'\ll f$, we can find $b',b^-_1,b^+_1,b,\in \tilde{B}_{j}$ such that $\tau_{i,j}(f')\ll b'\ll b^-_1\ll b^+_1\ll b\ll \tau_{i,j}(f)$. Applying the claim, we readily obtain
\[
\left\{
\begin{array}{ll}
\sigma_{\varphi(j),\varphi(k)}\circ\alpha_{j}\circ\tau_{i,j}(f')\leq \alpha_{k}\circ\tau_{i,k}(f)\\
 \alpha_{k}\circ\tau_{i,k}(f')\leq\sigma_{\varphi(j),\varphi(k)}\circ\alpha_{j}\circ\tau_{i,j}(f)
\end{array}
\right.
\]
for any $j\geq i_F$ and any $k> j$. In other words, $\alpha_{k}\circ\tau_{i,k}\simeq_F \sigma_{\varphi(j),\varphi(k)}\circ\alpha_{j}\circ\tau_{i,j}$, as required.
\end{proof}

\begin{lma}[Homogeneity]
\label{lma:homogeneity}
Let $\cc\subseteq \Cu$ be a Fra\"{i}ss\'{e} category and let $(S_i,\sigma_{i,j})_{i\in\N}$ be a Fra\"{i}ss\'{e} sequence. 
Then, for any $\cc$-morphisms $\alpha\colon C\longrightarrow S_l$, $\beta\colon C\longrightarrow S_l$ and any finite subset $F\subseteq C$, there exists a two-sided approximate intertwining $(\eta_i\colon S_i\longrightarrow S_{\varphi(i)},\nu_i\colon S_i\longrightarrow S_{\psi(i)})_{i\geq l}$ such that
\[
\sigma_{l,\psi(i)}\circ\alpha\simeq_F \nu_i\circ\sigma_{l,i}\circ\beta \quad \text{ and }\quad \sigma_{l,\varphi(i)}\circ\beta\simeq_F\eta_i\circ\sigma_{l,i}\circ\alpha
 \]
for any $i\geq l$.
\end{lma}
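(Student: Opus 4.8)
The argument is a back-and-forth construction that follows the uniqueness half of \autoref{thm:existenceuniqueness} almost verbatim; the only two modifications are that the recursion is \emph{initialized} from the pair $(\alpha,\beta)$ rather than from the joint embedding property, and that the refinement sets governing the approximations are enlarged so as to also track the images of $F$ under $\alpha$ and $\beta$. As always when working with $\simeq_F$, whenever I need to chain several approximate commutativities I would silently pass to a suitable $n$-refinement, exactly as in the proofs of \autoref{thm:existenceuniqueness} and \autoref{lma:universality}. \emph{Initialization.} First I would apply the near amalgamation property to $\alpha,\beta\colon C\longrightarrow S_l$ and the finite set $F$, obtaining an object $D\in\cc$ and $\cc$-morphisms $\gamma_1,\gamma_2\colon S_l\longrightarrow D$ with $\gamma_1\circ\alpha\simeq_F\gamma_2\circ\beta$. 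Since $D$ lies in $\cc$ and $(S_i,\sigma_{i,j})_i$ is a Fra\"{i}ss\'{e} sequence, the Fra\"{i}ss\'{e} sequence property (\autoref{dfn:cufraisseseqdfn}) lets me absorb $D$ back into the sequence: applied to $\gamma_2$ and a finite set refining $\beta(F)$, it produces an index $\varphi(l)\geq l$ and a $\cc$-morphism $\delta\colon D\longrightarrow S_{\varphi(l)}$ with $\delta\circ\gamma_2\simeq\sigma_{l,\varphi(l)}$ on that set. Setting $\eta_l:=\delta\circ\gamma_1$ and chaining $\eta_l\circ\alpha=\delta\circ\gamma_1\circ\alpha\simeq_F\delta\circ\gamma_2\circ\beta$ with $\delta\circ\gamma_2\circ\beta\simeq_F\sigma_{l,\varphi(l)}\circ\beta$ on a refinement of $F$ gives the base case of the second asserted relation, namely $\eta_l\circ\alpha\simeq_F\sigma_{l,\varphi(l)}\circ\beta$.

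\emph{The back-and-forth.} Starting from $\eta_l$, I would then run the alternating recursion of \autoref{thm:existenceuniqueness}: the Fra\"{i}ss\'{e} sequence property, applied in turn to the most recently constructed morphism, yields strictly increasing maps $\varphi,\psi\colon\N\longrightarrow\N$ together with $\cc$-morphisms $\eta_i\colon S_i\longrightarrow S_{\varphi(i)}$ and $\nu_i\colon S_i\longrightarrow S_{\psi(i)}$ for all $i\geq l$ (the missing indices being filled in by composing with the connecting maps, as in loc.\ cit.), each new composite approximating the relevant $\sigma$ on a prescribed refinement. At every stage I would enlarge these refinements so that they contain the elements $\sigma_{l,i}\circ\alpha(F)$ and $\sigma_{l,i}\circ\beta(F)$ together with all their images under the paths of the diagram. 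The same Claim-type estimate as in \autoref{thm:existenceuniqueness} then shows that the long composites $\nu_k\circ\sigma_{\varphi(j),k}\circ\eta_j\circ\sigma_{i,j}$ and $\eta_{k'}\circ\sigma_{\psi(j'),k'}\circ\nu_{j'}\circ\sigma_{i,j'}$ collapse on $F$ to $\sigma_{i,\psi(k)}$ and $\sigma_{i,\varphi(k')}$; hence $(\eta_i,\nu_i)_i$ is a two-sided approximate intertwining in the sense of \autoref{dfn:atwosided}, and by \autoref{thm:2SidIntCu} it induces mutually inverse $\Cu$-automorphisms of $S$.

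\emph{Propagation of the $(\alpha,\beta)$-relations.} It remains to promote the base case to the two asserted relations for every $i\geq l$. For the $\eta$-relation I would invoke the one-sided consistency $\eta_i\circ\sigma_{l,i}\simeq_{\alpha(F)}\sigma_{\varphi(l),\varphi(i)}\circ\eta_l$, which is built into the intertwining; since $\alpha$ preserves the $\ll$-relation, precomposing with $\alpha$ and combining with the base case $\eta_l\circ\alpha\simeq_F\sigma_{l,\varphi(l)}\circ\beta$ shows, after the usual refinement, that $\sigma_{l,\varphi(i)}\circ\beta\simeq_F\eta_i\circ\sigma_{l,i}\circ\alpha$. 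The $\nu$-relation is then its dual: rewriting $\sigma_{l,i}\circ\beta$ through the $\eta$-relation and cancelling a factor of the form $\nu\circ\eta$ against the corresponding connecting map (using the two-sided estimate from \autoref{dfn:atwosided}) yields $\nu_i\circ\sigma_{l,i}\circ\beta\simeq_F\sigma_{l,\psi(i)}\circ\alpha$. Conceptually, these two statements are nothing but the finite-stage incarnation of $\Phi\circ\alpha_\infty\simeq_F\beta_\infty$ and $\Phi^{-1}\circ\beta_\infty\simeq_F\alpha_\infty$, where $\Phi$ is the induced automorphism of $S$ and $\alpha_\infty,\beta_\infty\colon C\longrightarrow S$ are the maps into the limit, so once the first holds the second is forced by $\Phi^{-1}=\lim_i\nu_i$.

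\emph{Main obstacle.} The mere existence of the intertwining is a repetition of \autoref{thm:existenceuniqueness}, so the real content is ensuring that the \emph{single} initial amalgamation of $\alpha$ and $\beta$ is not washed out by the recursion. This is exactly what forces me to keep $\alpha(F)$ and $\beta(F)$, and their images along every path, inside all the refinement sets $\tilde{B}_{\psi(i)},\tilde{C}_{\varphi(i)}$, and to retain enough $\ll$-room at each stage to absorb the non-transitivity of $\simeq_F$ when the intertwining estimate and the base case are finally fused. Once this bookkeeping is arranged, both relations drop out of the very Claim that establishes the two-sided approximate intertwining.
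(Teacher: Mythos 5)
Your proposal is correct and follows essentially the same route as the paper: initialize by near-amalgamating $\alpha$ and $\beta$ and absorbing the amalgam back into the sequence via the Fra\"{i}ss\'{e} sequence property, then run the back-and-forth recursion from \autoref{thm:existenceuniqueness} with the refinement sets enlarged to track $\alpha(F)$, $\beta(F)$ and their images along paths, so that the Claim-type estimate propagates the initial relation to all stages. The only (immaterial) difference is that the paper applies the Fra\"{i}ss\'{e} sequence property twice at the initialization to secure both base relations at once, starting from the data $\tilde{B}_l:=\alpha(\tilde{F})$, $\tilde{C}_l:=\beta(\tilde{F})$, whereas you obtain one base relation and derive the other by propagation.
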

\begin{proof}
We will build the two-sided approximate intertwining following the structure of the previous proofs.

First, let us consider a $4$-refinement $\tilde{F}$ of $F$. (That is, $\tilde{F}$ is a finite subset of $C$ that contains $F$ and is such that, for any $f',f\in F$ with $f'\ll f$, there exist $g',g^-,g^+,g\in\tilde{F}$ satisfying $f'\ll g'\ll g^-\ll g^+\ll g\ll f$.) Then, using the near amalgamation property together with the Fra\"{i}ss\'{e} sequence property, we construct $\cc$-morphisms $\nu_l,\eta_{\psi(l)}$ such that the following diagram approximately commutes.
\[
   \xymatrix{
&S_{l}\ar[rr] \ar[rd]_{} \ar@{}[dd]|-{\simeq_{\tilde{F}}} & \ar@{}[d]|-(.3){\simeq_{\alpha(\tilde{F})}} &S_{\psi(l)}\ar[rdd]^{\eta_{\psi(l)}}&\\
C\ar[ru]^{\alpha}\ar[rd]_{\beta}&&V_{l}\ar[ru]&\\
&S_{l}\ar@{-->}@/^{-1pc}/[rruu]_{\nu_{l}}\ar[ru]\ar[rrr]&&  \ar@{}[uu]|-(.3){\simeq_{\beta(\tilde{F})}} &S_{\varphi(l)}
 }  
\]

Using the ideas and techniques from \autoref{thm:existenceuniqueness}, it is readily verified that
\[
\left\{
\begin{array}{ll}
\sigma_{l,\varphi(l)}\circ\beta(f') &\leq\sigma_{l,\varphi(l)}\circ\beta(g')\leq \eta_{\psi(l)}\circ\sigma_{l,\psi(l)}\circ\alpha(g)\leq \eta_{\psi(l)}\circ\sigma_{l,\psi(l)}\circ\alpha(f)\\
\sigma_{l,\psi(l)}\circ\alpha(f') &\leq \sigma_{l,\psi(l)}\circ\alpha(g')\leq\nu_l\circ\beta(g)\leq \nu_l\circ\beta(f)
\end{array}
\right.
\]

Finally, from the initial data $\tilde{B}_l:=\alpha(\tilde{F}),\tilde{C}_l:=\beta(\tilde{F}), \nu_l,\eta_{\psi(l)}$, one can construct a two-sided approximate intertwining following the proof of \autoref{thm:existenceuniqueness} (starting at $l$ instead of $0$). Such an intertwining will enjoy the desired properties.
\end{proof}

We now have all the tools that we need to obtain Fra\"{i}ss\'{e} limits in the category $\Cu$. Let us first introduce the almost factorization property adapted to our setting.

\begin{dfn}\label{dfn:AFPCu}
Let $\cc\subseteq \dd$ be an inclusion of categories in $\Cu$ such that $\dd$ has inductive limits. We say that the inclusion $\cc\subseteq \dd$ has the \emph{(Cuntz) almost factorization property} if, for any $C\in \cc$, any $\cc$-inductive system $(S_i,\sigma_{i,j})_{i,j\in I}$ with $\dd$-limit $(S,\sigma_{i,\infty})_{i}$, any  $\dd$-morphism $\alpha\colon C\longrightarrow S$, and any finite subset $F\subseteq C$, there exist an index $i_F$ and a $\cc$-morphism $\alpha_F\colon C\longrightarrow S_{i_F}$ satisfying $\sigma_{i_F,\infty}\circ \alpha_F\simeq_F \alpha$.
\end{dfn}

Adapting the definition of the Ind-completion (see, for example, \cite[Chapter~VI]{J82}), we define:

\begin{dfn}\label{dfn:CompletionCu}
Let $\cc$ be a subcategory of $\Cu$. The \emph{completion of $\cc$}, denoted by $\overline{\cc}$, is the subcategory of $\Cu$ whose
\begin{itemize}
 \item[(i)] objects are $\Cu$-limits of inductive sequences in $\cc$.
 \item[(ii)] morphisms are induced by \emph{some} one-sided approximate intertwining. 
 
More specifically, a $\Cu$-morphism $\alpha\colon S\longrightarrow T$ between $\overline{\cc}$-objects $S,T$ is a $\overline{\cc}$-morphism if for any $\cc$-inductive sequence $(S_i,\sigma_{i,j})_{i}$ whose $\Cu$-limit objects is $S$, there exists a $\cc$-inductive sequence $(T_i,\tau_{i,j})_{i}$ whose $\Cu$-limit object is $T$ together with a one-sided approximate intertwining $(\alpha_i\colon S_i\longrightarrow T_{\varphi(i)})_i$ in $\cc$ which induces $\alpha$ in the sense of \autoref{prop:induced}.
\end{itemize} 
\end{dfn}

\begin{rmk}\label{rmk:IndCSubCat}
The following two properties of $\overline{\cc}$ are readily verified. (For example, they can be adapted from the results and references from  \cite[Chapter~VI]{J82}.)
\begin{itemize}
\item[(i)] The category $\overline{\cc}$ is a well-defined subcategory of $\Cu$ containing $\cc$ as a subcategory.
\item[(ii)] Any inductive sequence in $\cc$ has an inductive limit in $\overline{\cc}$ which coincides with its inductive limit in $\Cu$.
 In particular, any object $S\in\overline{\cc}$ can be written as the $\overline{\cc}$-limit object of an inductive sequence  in $\cc$.
\end{itemize}

Further, note that $\cc\subseteq\overline{\cc}$ may not satisfy the almost factorization property. However, this will be the case for all our examples.
\end{rmk}


\begin{thm}\label{prp:FraisseLimCu}
Let $\cc$ be a Fra\"{i}ss\'{e} category. Then any Fra\"{i}ss\'{e} sequence $(S_i,\sigma_{i,j})_i$ has a $\overline{\cc}$-limit $(S,\sigma_{i,\infty})_i$ such that
\begin{itemize}
 \item[(i)] $S$ is unique up to isomorphism, that is, $S$ does not depend on the Fra\"{i}ss\'{e} sequence chosen.
 \item[(ii)] The set $\Hom_{\overline{\cc}}(D,S)$ is nonempty whenever $D$ is countably-based.
\end{itemize}

Assume that $\cc$ is contained in a category $\dd$ where every $\cc$-inductive sequence has a limit, and where every $\dd$-object is the limit of a $\cc$-sequence. If $\cc\subseteq\dd$ satisfies the almost factorization property, then
\begin{itemize}
 \item[(iii)] For any $C\in\cc$, any $\alpha,\beta\in \Hom_{\dd}(C,S)$ and any finite set $F\subseteq C$, there exists a $\dd$-isomorphism $\eta_F\colon S\overset{\cong}{\longrightarrow} S$ such that $\eta_F\circ\alpha \simeq_F\beta$.
\end{itemize}
\end{thm}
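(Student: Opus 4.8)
The plan is to handle the three items in turn, leaning entirely on the machinery assembled earlier. Existence of the $\overline{\cc}$-limit is immediate: by \autoref{rmk:IndCSubCat}(ii) the $\Cu$-limit of the $\cc$-inductive sequence $(S_i,\sigma_{i,j})_i$ is simultaneously its $\overline{\cc}$-limit, so $(S,\sigma_{i,\infty})_i$ is well defined. For item (i), given two Fra\"{i}ss\'{e} sequences $(S_i,\sigma_{i,j})_i$ and $(S_i',\sigma_{i,j}')_i$ with $\overline{\cc}$-limits $S$ and $S'$, \autoref{thm:existenceuniqueness} connects them by a two-sided approximate intertwining in $\cc$, and \autoref{thm:2SidIntCu} turns this into a $\Cu$-isomorphism $\alpha\colon S\to S'$ whose inverse is induced by the reverse intertwining. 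Since both $\alpha$ and $\alpha^{-1}$ come from one-sided approximate intertwinings in $\cc$, \autoref{dfn:CompletionCu} certifies them as $\overline{\cc}$-morphisms, so $S\cong S'$ in $\overline{\cc}$ and the limit is independent of the chosen sequence. For item (ii), I would take $D\in\overline{\cc}$ countably-based and, using \autoref{rmk:IndCSubCat}(ii) together with the fact that $D$ is countably-based, present it as the $\overline{\cc}$-limit of a $\cc$-inductive sequence $(T_i,\tau_{i,j})_i$ of countably-based objects. \autoref{lma:universality} applied to this sequence and the Fra\"{i}ss\'{e} sequence $(S_i)_i$ yields a one-sided approximate intertwining $(\alpha_i\colon T_i\to S_{\varphi(i)})_i$, which by \autoref{prop:induced} induces a $\Cu$-morphism $\alpha\colon D\to S$; by \autoref{dfn:CompletionCu} this $\alpha$ is a $\overline{\cc}$-morphism, so $\Hom_{\overline{\cc}}(D,S)\neq\emptyset$.

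For item (iii), assume the almost factorization property for $\cc\subseteq\dd$, and fix $C\in\cc$, $\dd$-morphisms $\alpha,\beta\colon C\to S$, and a finite set $F\subseteq C$. First I would invoke \autoref{dfn:AFPCu} for both $\alpha$ and $\beta$; after composing with connecting maps this produces a single common index $l$ and $\cc$-morphisms $\alpha_F,\beta_F\colon C\to S_l$ with $\sigma_{l,\infty}\circ\alpha_F\simeq_F\alpha$ and $\sigma_{l,\infty}\circ\beta_F\simeq_F\beta$. Next I would feed the pair $\alpha_F,\beta_F\colon C\to S_l$ (and a suitable refinement of $F$) into \autoref{lma:homogeneity}, obtaining a two-sided approximate intertwining $(\eta_i,\nu_i)_{i\geq l}$ of the Fra\"{i}ss\'{e} sequence with itself such that $\sigma_{l,\varphi(i)}\circ\beta_F\simeq_F\eta_i\circ\sigma_{l,i}\circ\alpha_F$. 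By \autoref{thm:2SidIntCu} the sequence $(\eta_i)_i$ induces a $\Cu$-automorphism $\eta_F\colon S\to S$, which is in fact a $\dd$-isomorphism as it is assembled from $\cc$-morphisms inside a category $\dd$ that carries the relevant limits. To conclude $\eta_F\circ\alpha\simeq_F\beta$, I would chain four approximate identities: $\eta_F\circ\alpha\simeq\eta_F\circ\sigma_{l,\infty}\circ\alpha_F$ from the factorization of $\alpha$; $\eta_F\circ\sigma_{l,\infty}\circ\alpha_F\simeq\sigma_{\varphi(i),\infty}\circ\eta_i\circ\sigma_{l,i}\circ\alpha_F$ from the defining relation of $\eta_F$ in \autoref{prop:induced}; $\sigma_{\varphi(i),\infty}\circ\eta_i\circ\sigma_{l,i}\circ\alpha_F\simeq\sigma_{l,\infty}\circ\beta_F$ from the homogeneity relation above; and $\sigma_{l,\infty}\circ\beta_F\simeq\beta$ from the factorization of $\beta$.

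The main obstacle is squarely in item (iii): because $\simeq_F$ is not transitive, these four approximate identities cannot simply be concatenated. The remedy, as in \autoref{thm:existenceuniqueness} and \autoref{thm:2SidIntCu}, is to replace $F$ at the outset by a sufficiently fine $n$-refinement $\tilde{F}$, carry out the entire chain at the level $\simeq_{\tilde{F}}$, and let the refinement collapse the composite back to $\simeq_F$. The delicate bookkeeping is to keep the factorization index $l$ common to both $\alpha$ and $\beta$, to verify that each refinement survives the post- and pre-compositions used above (post-composition with the order- and $\ll$-preserving $\eta_F$ is harmless, while pre-composition requires the refinement to be chosen on the image side), and to align the indices $l,\varphi(i),\psi(i)$ when passing between the finite-stage maps $\eta_i$ and the induced automorphism $\eta_F$ via \autoref{prop:induced}. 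Managing this refinement depth so that exactly the four steps survive is the crux of the argument.
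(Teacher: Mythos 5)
Your proposal is correct and follows essentially the same route as the paper: (i) via \autoref{thm:existenceuniqueness} plus \autoref{thm:2SidIntCu}, (ii) via \autoref{lma:universality} plus \autoref{prop:induced}, and (iii) by factoring $\alpha,\beta$ through a common finite stage using the almost factorization property and then invoking \autoref{lma:homogeneity} and \autoref{thm:2SidIntCu}. Your closing discussion of non-transitivity of $\simeq_F$ is exactly the point the paper handles by fixing a $3$-refinement $\tilde{F}$ of $F$ at the outset and running the whole chain at the level of $\simeq_{\tilde{F}}$.
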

\begin{proof}
(i) follows immediately from \autoref{thm:existenceuniqueness} together with \autoref{thm:2SidIntCu}, while (ii) follows as a combination of \autoref{lma:universality} and \autoref{prop:induced}.

To see (iii), let  $\alpha,\beta\in \Hom_{\dd } (C,S)$ and let $F\subseteq C$ be a finite subset. Construct a $3$-refinement $\tilde{F}$ of $F$. Let $(S_i,\sigma_{i,j})_i$ be a Fra\"{i}ss\'{e} sequence. Then, by the almost factorization property, there exist morphisms $\alpha_{\tilde{F}},\beta_{\tilde{F}}\colon C\to S_l$ such that $\alpha\simeq_{\tilde{F}}\sigma_{l,\infty}\circ\alpha_{\tilde{F}}$ and $\beta\simeq_{\tilde{F}}\sigma_{l,\infty}\circ\beta_{\tilde{F}}$. Using \autoref{lma:homogeneity} and \autoref{thm:2SidIntCu}, we see that there exists an isomorphism $\eta_F\colon S\longrightarrow S$ satisfying the desired condition.
\end{proof}

\begin{prg}[\textbf{Fra\"{i}ss\'{e} Categories of $\Cus$-semigroups}] Following the discussion from \autoref{prg:CompAppIntCus}, we note that all the results above also do not use the fact that the underlying ordered monoids have a positive order. Thus, we have in fact developed a Fra\"{i}ss\'{e} theory for $\Cus$-semigroups.
\end{prg}


\subsection{\texorpdfstring{$\CatCa$-algebras and Fra\"{i}ss\'{e} categories of Cuntz semigroups}{C*-algebras and Fraisse categories of Cuntz semigroups}}
\label{subsec:ClassMorCat}
In this last subsection we study under which assumptions the functor $\Cu$ induces a Fra\"{i}ss\'{e} category of Cuntz semigroups when applied to a Fra\"{i}ss\'{e} category of separable $\CatCa$-algebras. A natural (but rather strong) assumption to consider is that $\Cu$ classifies $^*$-homomorphisms of the Fra\"{i}ss\'{e} category $\cc\subseteq \CatCa$ at hand. We will see that, under an additional mild assumption, this is sufficient to deduce that $\Cu(\cc)$ is a Fra\"{i}ss\'{e} category. 
We will conclude with some remarks on  the link between these Fra\"{i}ss\'{e} categories, where we discuss a weak converse of our result and ways to considerably weaken the statement. 

Let us start by recalling the definition of classifying morphisms. (See, for example, \cite{R12} or \cite{C21a} for more details.) 

\begin{dfn}
Let $\cc$ and $\dd$ be subcategories of separable $\CatCa$-algebras. We say that the functor $\Cu$ \emph{classifies $^*$-homomorphisms} from $\cc$ to $\dd$ if, for any $A$ in $\cc$, any $B$ in $\dd$ and any scaled $\Cu$-morphism $\alpha\colon\Cu (A)\longrightarrow\Cu (B)$, there exists a $^*$-homomorphism $\chi\colon A\longrightarrow B$, unique up to approximate unitary equivalence, such that $\Cu (\chi)=\alpha$. 

We will say that $\Cu$ \emph{classifies $^*$-homomorphisms of $\cc$} whenever $\cc=\dd$. 
\end{dfn}

\begin{lma}
\label{lma:cstarcu}
Let $\cc$ be a category of separable $\CatCa$-algebras which is either full or replete\footnote{Most of the Fra\"{i}ss\'{e} categories that we will consider are not full (for example, one usually considers injective maps). However, they will all be replete.}. Assume that $\Cu$ classifies $^*$-homomorphisms of $\cc$. Then $\Cu(c)$ is a subcategory of $\Cu$.
\end{lma}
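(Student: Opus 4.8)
The object to be checked is the image of the functor $\Cu$ restricted to $\cc$: its objects are the $\Cu$-semigroups $\Cu(A)$ with $A\in\cc$ (each countably-based by \autoref{pgr:Cuntz}), and its morphisms are the $\Cu$-morphisms of the form $\Cu(f)$ with $f\in\CatMorph(\cc)$. The plan is to verify the subcategory axioms. That each such morphism has domain and codomain among the listed objects is immediate, and identities cause no trouble since $\id_{\Cu(A)}=\Cu(\id_A)$ already lies in the image. The only genuine point --- and the reason the hypotheses are imposed --- is \emph{closure under composition}, because the image of a functor need not be closed under composition: a single object of the target may arise from several distinct objects of the source.

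Concretely, suppose $\Cu(f)\colon\Cu(A)\to\Cu(B)$ and $\Cu(g)\colon\Cu(B')\to\Cu(C)$ are composable in $\Cu$, with $f,g\in\CatMorph(\cc)$. Composability says exactly that $\Cu(B)=\Cu(B')$ as objects of $\Cu$, but a priori $B\ne B'$, so $g\circ f$ is meaningless in $\cc$ and $\Cu(g)\circ\Cu(f)$ is not obviously of the form $\Cu(h)$. To repair this I would produce a \emph{bridge} $^*$-homomorphism $\theta\colon B\to B'$ lying in $\cc$ with $\Cu(\theta)=\id_{\Cu(B)}$. Granting such a $\theta$, the composite $g\circ\theta\circ f$ is a morphism of $\cc$ and, by functoriality, $\Cu(g\circ\theta\circ f)=\Cu(g)\circ\Cu(\theta)\circ\Cu(f)=\Cu(g)\circ\Cu(f)$, which exhibits the composite as an element of the image and closes the argument.

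The bridge is where the classification hypothesis enters. The identity of $\Cu(B)=\Cu(B')$ is a scaled $\Cu$-isomorphism, so since $\Cu$ classifies $^*$-homomorphisms of $\cc$ there is a $^*$-homomorphism $\theta\colon B\to B'$, unique up to approximate unitary equivalence, with $\Cu(\theta)=\id$. If $\cc$ is full, then $\theta$ is automatically a morphism of $\cc$ and we are finished. If instead $\cc$ is only replete, I would first upgrade $\theta$ to a genuine $^*$-isomorphism: applying classification also to the reverse identity yields $\theta'\colon B'\to B$ with $\Cu(\theta'\circ\theta)=\id_{\Cu(B)}$ and $\Cu(\theta\circ\theta')=\id_{\Cu(B')}$, whence $\theta'\circ\theta$ and $\theta\circ\theta'$ are approximately unitarily equivalent to the respective identities. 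A standard Elliott approximate-intertwining argument then produces a bona fide $^*$-isomorphism $B\cong B'$ that is approximately unitarily equivalent to $\theta$, and hence still induces $\id$ on Cuntz semigroups. As $\cc$ is replete, this isomorphism belongs to $\cc$ and serves as the required bridge.

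The main obstacle is precisely this last step in the replete case: converting the abstract equality $\Cu(B)=\Cu(B')$ into an honest morphism inside $\cc$. Fullness trivializes it, but under mere repleteness one must invoke both directions of the classification together with an intertwining argument to obtain a genuine isomorphism that repleteness can absorb. Everything else --- the objects, the identities, and the bookkeeping identity $\Cu(g\circ\theta\circ f)=\Cu(g)\circ\Cu(f)$ --- is routine.
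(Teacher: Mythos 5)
Your proof is correct and follows essentially the same route as the paper: the only non-trivial point is closure under composition, handled by lifting the identity of $\Cu(B)=\Cu(B')$ to a $^*$-isomorphism $B\cong B'$ that repleteness (or fullness) places inside $\cc$. The only differences are cosmetic: in the full case the paper lifts the composite $\Cu(\psi)\circ\Cu(\varphi)$ directly to a $^*$-homomorphism $A\to C$ rather than inserting a bridge, and in the replete case the paper simply asserts the existence of the lifted $^*$-isomorphism, whereas you correctly supply the two-sided Elliott approximate-intertwining argument that justifies it.
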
 

\begin{proof}
In both cases, the fact that the identity of every $\Cu(c)$-object is a $\Cu(c)$-morphism, and that both domain and codomain of a $\Cu(c)$-morphism are in $\Cu(c)$ is immediate. The non-trivial part is to check that $\Cu(\cc)$ is closed under composition of morphisms. 

Let $\varphi\colon A\longrightarrow B$ and $\psi\colon B'\longrightarrow C$ be $^*$-homomorphisms such that $\Cu(B)=\Cu(B')$. Write $\alpha:=\Cu(\psi)\circ\Cu(\varphi)\colon\Cu(A)\longrightarrow \Cu(C)$. Note that $\alpha$ is still a scaled morphism, that is, it maps the class of a strictly positive element in $A$ to a class below a strictly positive element in $C$. 

Assume that $\cc$ is full. Since $\Cu$ classifies $^*$-homomorphisms of $\cc$, we know that there exists a $^*$-homomorphism $\chi_\alpha\colon A\longrightarrow C$, which is a $\cc$-morphism by fullness of $\cc$, such that $\Cu(\chi_\alpha)=\alpha$. In other words, $\alpha\in\Cu(\cc)$. 

Now assume that $\cc$ is replete. Since $\Cu$ classifies $^*$-homomorphisms of $\cc$ and $\Cu(B)=\Cu(B')$, we know that there exists $^*$-isomorphism $\chi\colon B\cong B'$ lifting $\id_{\Cu(B)}$. Since $\cc$ is replete, we also know that $\chi$ is in fact a $\cc$-morphism. Now define $\chi_\alpha:=\psi\circ\chi\circ\varphi\colon A\longrightarrow C$. From construction, $\chi_\alpha$ is a $\cc$-morphism and $\Cu(\chi_\alpha)=\alpha$. In other words, $\alpha\in\Cu(\cc)$. 
\end{proof}

\begin{thm}
\label{thm:cstarcu}
Let $\cc$ be a category of separable $\CatCa$-algebras which is either full or replete. Assume that $\Cu$ classifies $^*$-homomorphisms of $\cc$. 

If $\cc$ is a Fra\"{i}ss\'{e} category whose Fra\"{i}ss\'{e} limit is $A$, then  $\Cu(\cc)$ is a Fra\"{i}ss\'{e} category whose Fra\"{i}ss\'{e} limit is $\Cu(A)$. 
\end{thm}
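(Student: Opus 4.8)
**

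The plan is to transport the three Fra\"{i}ss\'{e} axioms across the functor $\Cu$, using the classification hypothesis to lift abstract $\Cu$-morphisms back to $^*$-homomorphisms, and then to identify the Fra\"{i}ss\'{e} limit via continuity of $\Cu$. By \autoref{lma:cstarcu} we already know $\Cu(\cc)$ is a subcategory of $\Cu$, so the task is to verify (JEP$_{\Cu}$), (NAP$_{\Cu}$) and (SEP$_{\Cu}$), and then to compute the limit.

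First I would establish the correspondence between finite-set comparison $\simeq_F$ in $\Cu(\cc)$ and norm-comparison on finite sets in $\cc$. The key translation is \autoref{exa:ConvCa}: norm-convergence of $^*$-homomorphisms descends to $\simeq$-convergence of the induced $\Cu$-morphisms. More precisely, I would observe that whenever two $^*$-homomorphisms agree on a finite set up to $\varepsilon$ (in the sense of \autoref{rmk:FraisseAUE}), their induced $\Cu$-morphisms compare on a suitable finite set of Cuntz classes built from $\varepsilon$-cutdowns $(a-\varepsilon)_+$. This lets me push the norm-based axioms of the $\CatCa$-Fra\"{i}ss\'{e} category $\cc$ forward to the finite-set-comparison axioms of $\Cu(\cc)$. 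For (JEP$_{\Cu}$), given $\Cu(A_1),\Cu(A_2)$, apply (JEP) of $\cc$ to $A_1,A_2$ to obtain a common $B$ with $^*$-homomorphisms into it, then apply $\Cu$. For (NAP$_{\Cu}$), given $\alpha_1,\alpha_2$ on $\Cu(A)$ and a finite $F\subseteq\Cu(A)$, I would first lift $\alpha_1,\alpha_2$ to $^*$-homomorphisms (using classification), choose a finite subset of $A$ whose Cuntz classes dominate $F$ via cutdowns, apply the norm-(NAP) of $\cc$ on that set with a small enough $\varepsilon$, and transport back through \autoref{exa:ConvCa} to obtain $\beta_1\circ\alpha_1\simeq_F\beta_2\circ\alpha_2$.

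For (SEP$_{\Cu}$), I would take the countable dominating subcategory $\mathfrak{s}\subseteq\cc$ and set $\Cu(\mathfrak{s})$ as the candidate dominating subcategory of $\Cu(\cc)$; its objects are countably-based since the Cuntz semigroups of separable $\CatCa$-algebras are countably-based, and its morphism set is countable as the image of a countable set. The density clause again follows by lifting, applying the norm-(SEP) of $\cc$, and transporting via the finite-set translation. Finally, to identify the limit, I would start from a Fra\"{i}ss\'{e} sequence $(A_i,\varphi_{i,j})$ in $\cc$ with limit $A$; applying the continuous functor $\Cu$ yields an inductive sequence $(\Cu(A_i),\Cu(\varphi_{i,j}))$ whose $\Cu$-limit is $\Cu(A)$ by continuity of $\Cu$. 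I would verify that this image sequence is a (Cuntz) Fra\"{i}ss\'{e} sequence in the sense of \autoref{dfn:cufraisseseqdfn} --- lifting the required factorization map through classification and transporting the norm-approximation --- and then invoke \autoref{thm:existenceuniqueness} (uniqueness) together with \autoref{prp:FraisseLimCu} to conclude that $\Cu(A)$ is, up to isomorphism, the Fra\"{i}ss\'{e} limit of $\Cu(\cc)$.

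The main obstacle I anticipate is the faithful bookkeeping of the $\varepsilon$-to-finite-set translation, and in particular ensuring that the lifts provided by classification can be chosen compatibly. Classification gives $^*$-homomorphisms unique only up to approximate unitary equivalence, so when I lift an abstract composite such as $\beta_1\circ\alpha_1$ I must check that the approximate-unitary ambiguity is harmless for finite-set comparison; this is exactly where \autoref{rmk:FraisseAUE} and the invariance of $\simeq_F$ under the induced $\Cu$-morphisms (which are unchanged by unitary conjugation, since $\Cu(\Ad u)=\id$) will be needed. A secondary delicate point is checking that the image Fra\"{i}ss\'{e} sequence genuinely satisfies condition (ii) of \autoref{dfn:cufraisseseqdfn} for \emph{arbitrary} $\Cu(\cc)$-morphisms out of $\Cu(A_i)$ --- not merely those of the form $\Cu(\text{something})$ --- but this is handled precisely because classification guarantees every such abstract morphism is lifted by a $^*$-homomorphism, reducing the condition back to the norm-Fra\"{i}ss\'{e}-sequence property of $(A_i)$ in $\cc$.
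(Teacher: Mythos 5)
Your proposal is correct and follows essentially the same route as the paper: the joint embedding property is pushed forward directly, near amalgamation is transported via the $\varepsilon$-cutdown fact that $[a]\ll[b]$ implies $[a]\ll[(b-\varepsilon)_+]$ for $\varepsilon$ small enough relative to a finite set, separability uses the image of the countable dominating subcategory together with separability of the algebras, and the limit is identified by showing the image of a Fra\"{i}ss\'{e} sequence is Fra\"{i}ss\'{e} (via classification) and invoking continuity of $\Cu$. The only cosmetic difference is that the paper need not ``lift'' morphisms of $\Cu(\cc)$ via classification, since by definition they already arise as $\Cu(\phi)$ for $\cc$-morphisms $\phi$; classification is only needed to reconcile mismatched domains (as in \autoref{lma:cstarcu}) and for the Fra\"{i}ss\'{e}-sequence verification, exactly as you note.
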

\begin{proof}
The joint embedding property immediately follows applying the functor $\Cu$. 

Let us prove the near amalgamation property in $\Cu(\cc)$. First, we note the following fact.

\textbf{Fact}. Let $A$ be a $\CatCa$-algebra and let $F\subseteq \Cu(A)$ be a finite subset. Then there exists a small enough $\varepsilon>0$ such that, for any $[a],[b]\in F$ with $[a]\ll [b]$, then $[a]\ll [(b-\varepsilon)_+]$. 

Let $\Cu(\phi_1)\colon \Cu(A)\longrightarrow \Cu(B_1)$ and $\Cu(\phi_2)\colon \Cu(A')\longrightarrow \Cu(B_2)$ be $\cc$-morphisms with $\Cu(A)=\Cu(A')$. Using the same arguments as above, we may assume $A=A'$. Let $F\subseteq \Cu(A)$ be finite and let $\varepsilon >0$ be the constant given by the previous fact. Now, using the near amalgamation property in $\cc$, we know that there exist $\cc$-morphisms $\psi_1\colon B_1\longrightarrow C$ and $\psi_2\colon B_2\longrightarrow C$ such that $d_{\CatCa}(\psi_1\circ\phi_1,\psi_2\circ\phi_2)<\varepsilon$. We are left to show that $\Cu(\psi_1\circ\phi_1)\simeq_F \Cu(\psi_2\circ\phi_2)$. Let $[a],[b]\in F$ be such that $[a]\ll [b]$. We know that $\Vert \psi_1\circ\phi_1(b)-\psi_2\circ\phi_2(b)\Vert<\varepsilon$ and that $[a]\ll [(b-\varepsilon)_+]$ which implies that 
\[
\left\{
\begin{array}{ll}
\Cu(\psi_1\circ\phi_1)([a])\ll\Cu(\psi_1\circ\phi_1)([(b-\varepsilon)_+])\leq \Cu(\psi_2\circ\phi_2)([b])\\
\Cu(\psi_2\circ\phi_2)([a])\ll\Cu(\psi_2\circ\phi_2)([(b-\varepsilon)_+])\leq \Cu(\psi_1\circ\phi_1)([b])
\end{array}
\right.
\]
as desired.

That $\Cu (\cc )$ is separable is shown similarly, by also using the fact that any $\CatCa$-algebra in $\cc$ is separable, and thus gives rise to a countably-based $\Cu$-semigroup.

We deduce that $\Cu(\cc)$ is a Fra\"{i}ss\'{e} category. Further, given any Fra\"{i}ss\'{e} sequence in $\cc$, we can use that $\Cu$ classifies $^*$-homomorphisms of $\cc$ to show that the induced sequence in $\Cu$ is Fra\"{i}ss\'{e} in $\Cu (\cc)$. Thus, by continuity of the  functor $\Cu$, it is readily checked that $\Cu(A)$ is the Fra\"{i}ss\'{e} limit of $\Cu(\cc)$.
\end{proof}

Some remarks are in order:

\begin{rmk}\label{rmk:endingpartc}

(i) Assume that $\cc$ is such that $\Cu$ classifies $^*$-homomorphisms from $\cc$ to stable rank one $\CatCa$-algebras or, more generally, to a category $\dd$ closed under ultraproducts. Then, it follows from \cite[Theorem~3.3.1]{R12} that the previous theorem has a weak converse: $\Cu (\cc )$ is Fra\"{i}ss\'{e} if and only if $\cc$ is Fra\"{i}ss\'{e} with respect to approximate unitary equivalences (in the sense of \autoref{rmk:FraisseAUE}).

\noindent (ii) The assumption of classification of $^*$-homomorphisms is rather strong in general. For instance, it is proved in \cite{C23b} that the functor $\Cu$ does not classify $^*$-homomorphisms of circle algebras. Nevertheless, we do not use the full force of the assumption, neither in \autoref{lma:cstarcu} nor in \autoref{thm:cstarcu}. Explicitly, one only needs to assume the following much weaker condition:
\begin{itemize}
\item[] \emph{For every $A,A',B,B'\in\cc$ such that $\Cu (A)=\Cu(A')$ and $\Cu(B)=\Cu(B')$, and any $\cc$-morphism $\varphi\colon A\to B$, there exists a $\cc$-morphism $\phi\colon A'\to B'$ such that $\Cu (\phi )=\Cu(\varphi )$.}
\end{itemize} 

Note that, in particular, this holds whenever our category has a single object. (See \autoref{rmk:AltPrfCantPseud}.)

To obtain a weak converse (as in (i)), the additional property that one needs is:
\begin{itemize}
\item[] \emph{For every $\CatCa$-algebra $A$ in $\cc$, any finite set $F\subseteq A$ and any $\varepsilon>0$ there exists a finite subset $G\subseteq\Cu (A)$ such that, whenever two $\cc$-morphisms $\varphi_1,\varphi_2\colon A\to B$ satisfy $\Cu (\varphi_1)\simeq_G\Cu(\varphi_2)$, then there exists $u\in\tilde{B}$ such that $\Vert u^*\varphi_1 (x)u - \varphi_2 (x)\Vert<\varepsilon$ for every $x\in F$.}
\end{itemize}

\noindent (iii) A number of examples of $\CatCa$-algebraic Fra\"{i}ss\'{e} subcategories $\cc$ have injective $^*$-homo\-morphisms as morphisms. Note that the theorem above does not imply that $\cc$ induces a category $\Cu(\cc)$ whose morphisms are order-embeddings, since injective $^*$-homomorphisms do not generally induce injective $\Cu$-morphisms.

For example, the diagonal map $\C\oplus\C\longrightarrow M_2 (\C )$ is injective, but the induced $\Cu$-morphism maps both $[(1,0)]$ and $[(0,1)]$ to $[1\oplus 0]$. Conversely, a $^*$-homomorphism that induces an order-embedding may not be injective.
\end{rmk}


\section{Examples}\label{sec:examples}
In this section we exhibit natural examples of (Cuntz) Fra\"{i}ss\'{e} categories together with their Fra\"{i}ss\'{e} limit. This allows us to deduce several generic properties about the $\Cu$-semigroups at play. 
More explicitly, we show that the Cuntz semigroup of any UHF-algebra and that the Cuntz semigroup of the universal AF-algebra are the Fra\"{i}ss\'{e} limits of some well-chosen Fra\"{i}ss\'{e} categories. We also show that there are countably many Fra\"{i}ss\'{e} categories of elementary $\Cu$-semigroups whose Fra\"{i}ss\'{e} limits are not purely infinite, non-stably finite, simple $\Cu$-semigroups. Finally, we prove that both $\Cu$-semigroups $\Lsc(X,\overline{\N})$, where $X$ is either the Cantor set $2^\N$ or the pseudo-arc $\mathbb{P}$, can also be written as Fra\"{i}ss\'{e} limits.

In all our examples the inclusion $\cc\subseteq\overline{\cc}$ satisfies the almost factorization property.

\subsection{\texorpdfstring{Dimension $\Cu$-semigroups of infinite type as Fra\"{i}ss\'{e} limits}{Dimension Cu-semigroups of infinite type as Fraisse limits}}\label{subsec:A} In what follows, we show that the Cuntz semigroup of any UHF-algebra arises as a Fra\"{i}ss\'{e} limit. Following \cite{APT18}, recall that a $\Cu$-semigroup $S$ is said to be \emph{simplicial} whenever $S\cong \overline{\N}^r$ for some $r\in\N$, and that an inductive limit of simplicial $\Cu$-semigroups is called a \emph{dimension $\Cu$-semigroup}. 

Let $p$ be a prime number and consider the semigroup $S_p:=\N[\frac{1}{p}]\,\sqcup\,(0,\infty]$, where the mixed sum and mixed order are defined as follows: 
\begin{itemize}
 \item[(+)] $x_c+x_s=2x_s$ for any pair $x_c=k/p^l \in \N[\frac{1}{p}]$ and $x_s=k/p^l\in (0,\infty]$.
 \item[($\leq $)] retaining the same notation, $x_s\leq x_c\ll x_c \leq x_s+\varepsilon$ for any $\varepsilon>0$.
\end{itemize}
It is well-known that $S_p$ is the Cuntz semigroup of the UHF-algebra $M_{p^\infty}$. (See, e.g. \cite[Proposition~7.4.3]{APT18}.)


\begin{prg} \emph{The category $\mathfrak{s}_p$} is the category whose (single) object is $\overline{\N}$ and whose morphisms are powers of $p$. (That is, a $\mathfrak{s}_p$-morphism is given by $\overline{\N}\overset{\times p^k}\longrightarrow \overline{\N}$ for some $k\in\N$.)
\end{prg}
\begin{thm}\label{prp:SpCuFrais}
The category $\mathfrak{s}_p$ is Fra\"{i}ss\'{e} and its limit is $S_p$.
 \end{thm}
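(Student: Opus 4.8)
The plan is to first dispatch the Fra\"{i}ss\'{e} axioms, which are essentially formal because $\mathfrak{s}_p$ has a single object, and then to produce one convenient Fra\"{i}ss\'{e} sequence and identify its limit with $S_p$ through the continuity of the functor $\Cu$.

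\emph{Fra\"{i}ss\'{e} axioms.} The single object $\overline{\N}$ is countably based and the morphism set $\{\times p^k : k\in\N\}$ is countable, so $\mathfrak{s}_p$ is its own countable dominating subcategory and the nontrivial clause of (SEP$_{\Cu}$) is witnessed by $T=\overline{\N}$, $\alpha=\id$ and $\tau=\sigma$. (JEP$_{\Cu}$) is immediate from the presence of a single object. For (NAP$_{\Cu}$), given $\alpha_1=\times p^a$ and $\alpha_2=\times p^b$ from $A=\overline{\N}$ I would set $\beta_1=\times p^b$ and $\beta_2=\times p^a$, so that $\beta_1\circ\alpha_1=\times p^{a+b}=\beta_2\circ\alpha_2$; amalgamation is thus exact, and (NAP$_{\Cu}$) holds for every finite $F$.

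\emph{A Fra\"{i}ss\'{e} sequence and its limit.} By \autoref{thm:existenceuniqueness} together with the uniqueness in \autoref{prp:FraisseLimCu}(i), it suffices to exhibit a single Fra\"{i}ss\'{e} sequence and compute its $\overline{\mathfrak{s}_p}$-limit, which by \autoref{rmk:IndCSubCat}(ii) agrees with its $\Cu$-limit. I would take $S_i=\overline{\N}$ and $\sigma_{i,i+1}=\times p$, so that $\sigma_{i,j}=\times p^{j-i}$. To verify \autoref{dfn:cufraisseseqdfn}, given $\alpha=\times p^m\colon S_i\to C$ and a finite set $F$, the choice $j=i+m$ and $\beta_F=\id$ yields $\beta_F\circ\alpha=\sigma_{i,j}$ on the nose, so the sequence is Fra\"{i}ss\'{e}. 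To identify its limit, recall that $\Cu(M_{p^n})\cong\overline{\N}$ (a Cuntz class in a matrix algebra is its rank) and that the standard unital inclusion $M_{p^n}\hookrightarrow M_{p^{n+1}}$ of multiplicity $p$ induces multiplication by $p$. Since $M_{p^\infty}=\varinjlim_n M_{p^n}$ and $\Cu$ is continuous, we get $\Cu(M_{p^\infty})=\varinjlim(\overline{\N},\times p)$, which is exactly the $\Cu$-limit of the chosen Fra\"{i}ss\'{e} sequence. As $\Cu(M_{p^\infty})=S_p$ by \cite[Proposition~7.4.3]{APT18}, the Fra\"{i}ss\'{e} limit of $\mathfrak{s}_p$ is $S_p$.

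\emph{Main obstacle.} The axioms being trivial, the only real content is the identification of the limit, and the subtlety there is that it is a genuine $\Cu$-limit rather than the algebraic inductive limit of the monoids $(\overline{\N},\times p)$: the algebraic limit recovers only the compact part $\N[\frac{1}{p}]$ together with a single top element, whereas the soft part $(0,\infty]$ of $S_p$ materializes only after the $\Cu$-completion, as suprema of strictly increasing sequences of $p$-adic rationals. Routing the computation through the continuity of $\Cu$ and the UHF picture absorbs this completion step cleanly. A self-contained alternative would instead build the cocone $\sigma_{i,\infty}\colon\overline{\N}\to S_p$ sending a finite $n$ to the compact element $n/p^i$ and $\infty$ to the top element, check that these are $\Cu$-morphisms compatible with $\times p$, and verify the universal property directly; this is where essentially all the work would concentrate.
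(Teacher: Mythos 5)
Your proposal is correct and follows essentially the same route as the paper: the axioms are dispatched via the single object and the commutativity of composition in $\Hom_{\mathfrak{s}_p}(\overline{\N},\overline{\N})$, the constant sequence with connecting maps $\times p$ is shown to be Fra\"{i}ss\'{e}, and the limit is identified as $S_p=\Cu(M_{p^\infty})$ by continuity of $\Cu$ (the paper uses $\beta_F=\times p$ with $j=i+k+1$ where you use $\beta_F=\id$ with $j=i+m$, an immaterial difference since $\id=\times p^0$ is an $\mathfrak{s}_p$-morphism). Your closing discussion of why the soft part $(0,\infty]$ only appears after the $\Cu$-completion is a useful elaboration of a point the paper leaves implicit.
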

 
 \begin{proof}
The joint embedding property and separability are clear, while the  amalgamation property follows from the fact that the composition in $\Hom_{\mathfrak{s}_p}(\overline{\N},\overline{\N})$ is commutative. Therefore $\mathfrak{s}_p$ is a Fra\"{i}ss\'{e} category. 
 
 Let us now show that the sequence given by $(\overline{\N},\overset{\times p}\longrightarrow)_n$ is Fra\"{i}ss\'{e}. For any $\mathfrak{s}_p$-morphism $\alpha\colon\overline{\N}\overset{\times p^k}\longrightarrow \overline{\N}$ there exists a large enough index  (consisting of $k+1$ steps further from the domain of $\alpha$) such that the following diagram commutes
\[
   \xymatrix{
     \ldots\ar[r] & \overline{\N}\ar[rd]_{\alpha}\ar[r] &\ldots\ar[r] & \overline{\N}\ar[r] &\ldots \\
   && \overline{\N}\ar[ru]_{\times p} &&
   } 
  \]
It follows that the sequence is Fra\"{i}ss\'{e} and, hence, its limit $S_p$ is the Fra\"{i}ss\'{e} limit of $\mathfrak{s}_p$.
\end{proof}

\begin{rmk}
 An analogous statement to that of \autoref{prp:SpCuFrais} works for the Cuntz semigroup of any UHF-algebra. However, such semigroups may not be as easily computed as those of infinite type.
 Also, let us remark that we could have deduced the previous result from \autoref{thm:cstarcu}.
\end{rmk}

\subsection{\texorpdfstring{The universal dimension $\Cu$-semigroup as a Fra\"{i}ss\'{e} limit}{The universal dimension Cu-semigroup as a Fraisse limit}}\label{subsec:B}

Based on the work done in \cite{GK20}, we now exhibit a \emph{universal dimension $\Cu$-semigroup} $\mathcal{S}$ as a Fra\"{i}ss\'{e} limit of a well-chosen Fra\"{i}ss\'{e} category $\mathfrak{s}_{\dim}$ containing all simplicial $\Cu$-semigroups. As a matter of fact, we will see that $\mathcal{S}$ is the concrete Cuntz semigroup of the universal $\AF$-algebra $\mathcal{A}$ constructed in \cite{GK20}. Therefore, $\mathcal{S}$ enjoys analogous properties to those of $\mathcal{A}$, but in the category $\Cu$. Before we begin, we need to recall and prove some results about \emph{retractions} in the category $\Cu$.

\begin{prg}[\textbf{Retractions}]
We aim to define a Fra\"{i}ss\'{e} category $\mathfrak{s}_{\dim}$ that will be built out of retractable $\Cu$-morphisms.

Recall that an ordered monoid morphism $\alpha\colon S\longrightarrow T$ between two $\Cu$-semigroups is said to be a \emph{generalized $\Cu$-morphism} if $\alpha$ preserves suprema of increasing sequences.
\end{prg}
\begin{dfn}[{\cite[Definition~3.14]{TV22}}]
Let $S,T$ be $\Cu$-semigroups. We say that $S$ is a \emph{retract} of $T$ if there exists a $\Cu$-morphism $\iota\colon S\longrightarrow T$ together with a generalized $\Cu$-morphism $\rho\colon T\longrightarrow S$ such that $\rho\circ\iota=\id_S$. 

We say that $\iota$ is \emph{retractable} and that $\rho$ \emph{retracts} $\iota$.
\end{dfn}

Following \cite{CRS10}, a submonoid $I$ of a $\Cu$-semigroup $S$ is said to be an \emph{ideal} if $I$ is downward-hereditary and closed under suprema of increasing sequences. Given any ideal $I$ of $S$, one can construct the quotient $\Cu$-semigroup $S/I$; see \cite[Lemma~5.1.2]{APT18}. Ideals and quotients of a $\CatCa$-algebra $A$ are in bijective correspondence with the ideals and quotients of its Cuntz semigroup $\Cu (A)$; see \cite[Section~5]{APT18}.

\begin{prop}
Let $S$ be a $\Cu$-semigroup.
\begin{itemize}
 \item[(i)] If $S$ is countably-based and satisfies (O5)-(O7), then any ideal of $S$ is a retract of $S$.
 \item[(ii)] If $(S,\sigma_{i,\infty})_{i}$ is the limit of an inductive sequence $(S_i,\sigma_{i,j})_{i}$ such that $\sigma_{i,i+1}$ is retractable for each $i\in\N$, then $\sigma_{i,\infty}$ is retractable for any $i\in\N$.
\end{itemize}
\end{prop}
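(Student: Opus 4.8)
The plan is to prove the two parts by exhibiting explicit retractions in each case.

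For (i), the candidate for the retractable morphism is the inclusion $\iota\colon I\hooklongrightarrow S$ of the ideal. This is a $\Cu$-morphism: it preserves the order and the addition, it preserves suprema of increasing sequences because $I$ is downward-hereditary and closed under such suprema, and its compatibility with $\ll$ is the standard fact that the way-below relation of an ideal agrees with the one inherited from $S$. For the retraction I would define $\rho\colon S\longrightarrow I$ by
\[
\rho(s):=\sup\{x\in I : x\leq s\},
\]
the largest element of $I$ below $s$. For $s\in I$ the element $s$ itself is the maximum of $\{x\in I:x\le s\}$, so $\rho\circ\iota=\id_I$ holds immediately, and it remains only to check that $\rho$ is a well-defined generalized $\Cu$-morphism.

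The verification splits as follows. Existence of the defining supremum is the first issue, since a general $\Cu$-semigroup guarantees only suprema of increasing sequences; here I would use the countably-based hypothesis together with (O5)--(O7) to extract, for each $s$, a $\ll$-increasing sequence in $I$ that is cofinal among the elements of $I$ below $s$, whose supremum realises $\rho(s)$ and lies in $I$ by (O1). Order-preservation and $\rho(0)=0$ are immediate. Additivity is the core computation: the inequality $\rho(s)+\rho(t)\le\rho(s+t)$ is trivial, while for the reverse one takes $x\in I$ with $x'\ll x\le s+t$ and applies the almost-Riesz-decomposition axiom (O5) to split $x'\le e+f$ with $e\le s$ and $f\le t$; since $e,f\le x\in I$ are themselves in $I$, one gets $x'\le\rho(s)+\rho(t)$, and passing to suprema via (O2) gives $\rho(s+t)\le\rho(s)+\rho(t)$. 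Preservation of suprema of increasing sequences then follows from the definition of $\ll$: if $x\in I$ and $x'\ll x\le\sup_n s_n$, then $x'\le s_n$ for some $n$ and $x'\in I$, whence $x'\le\rho(s_n)$. The \emph{main obstacle} in (i) is precisely the well-definedness step, i.e.\ producing the cofinal $\ll$-increasing sequence in $I$; this is exactly where the regularity axioms (O5)--(O7) and countably-basedness are indispensable, since without them $\{x\in I:x\le s\}$ need be neither upward-directed nor possess a supremum.

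For (ii) the argument is categorical. Writing $\rho_{i+1,i}\colon S_{i+1}\longrightarrow S_i$ for a generalized $\Cu$-morphism retracting $\sigma_{i,i+1}$, I would first observe that retractability is closed under composition: setting $\rho_{j,i}:=\rho_{i+1,i}\circ\cdots\circ\rho_{j,j-1}$ yields a generalized $\Cu$-morphism (a composite of such is again one) with $\rho_{j,i}\circ\sigma_{i,j}=\id_{S_i}$ for all $j\ge i$. A telescoping computation then gives the compatibilities $\rho_{k,i}\circ\sigma_{j,k}=\rho_{j,i}$ and $\rho_{k,i}\circ\sigma_{i,k}=\id_{S_i}$ for $i\le j\le k$, so that the family defined by $\mu_j:=\sigma_{j,i}$ for $j\le i$ and $\mu_j:=\rho_{j,i}$ for $j\ge i$ (these agree at $j=i$) forms a compatible cocone into $S_i$ over the inductive system.

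Finally I would invoke the universal property of the inductive limit to obtain a generalized $\Cu$-morphism $\rho_{\infty,i}\colon S\longrightarrow S_i$ with $\rho_{\infty,i}\circ\sigma_{j,\infty}=\mu_j$; in particular $\rho_{\infty,i}\circ\sigma_{i,\infty}=\mu_i=\id_{S_i}$, so $\sigma_{i,\infty}$ is retractable. The only delicate point in (ii) is this last invocation: one must ensure that the inductive limit in $\Cu$ also enjoys the universal property for cocones of \emph{generalized} $\Cu$-morphisms, which is legitimate here because the connecting maps $\sigma_{i,j}$ are genuine $\Cu$-morphisms. Should one prefer to avoid citing this, $\rho_{\infty,i}$ can be constructed directly on the dense image $\bigcup_j\sigma_{j,\infty}(S_j)$ by $\rho_{\infty,i}(\sigma_{j,\infty}(x)):=\rho_{j,i}(x)$ and extended to all of $S$ through suprema of increasing sequences, the compatibilities above guaranteeing that it is well defined.
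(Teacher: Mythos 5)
Your overall strategy coincides with the paper's on both counts: in (i) the retraction is the same map (your $\rho(s)=\sup\{x\in I: x\le s\}$ is exactly $s\wedge\infty_I$, where $\infty_I$ is the largest element of the countably-based ideal $I$), and in (ii) the retraction of $\sigma_{i,\infty}$ is obtained as a limit of the composites $\rho_{j,i}:=\rho_{i+1,i}\circ\cdots\circ\rho_{j,j-1}$. Your verifications of additivity via almost Riesz decomposition (which, in the usual labelling, is (O6) rather than (O5)) and of preservation of suprema are correct.

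The one genuine gap is the step you yourself single out in (i): the existence of $\sup\{x\in I: x\le s\}$. This set need not be upward directed for trivial reasons --- given $x,y\in I$ with $x,y\le s$, the candidate $x+y$ lies in $I$ but need not be $\le s$ --- so producing a cofinal $\ll$-increasing sequence is not a routine consequence of countable basedness; it is precisely the content of the existence of the infimum $s\wedge\infty_I$, which the paper obtains by citing \cite[Theorems~2.4 and~2.5]{APRT21} (those results also yield at once that $s\mapsto s\wedge\infty_I$ is a generalized $\Cu$-morphism, so the additivity and sup-preservation computations come for free). Without that input, part (i) of your argument is incomplete.

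In (ii), your preferred route --- invoking the universal property of the inductive limit for a cocone of \emph{generalized} $\Cu$-morphisms --- is not available off the shelf, as you note; the universal property in $\Cu$ is stated for $\Cu$-morphisms. Your fallback is essentially the paper's actual argument, but the well-definedness of $\rho_{\infty,i}(\sigma_{j,\infty}(x)):=\rho_{j,i}(x)$ is not automatic from the cocone compatibilities alone, because $\sigma_{j,\infty}$ need not be injective and $\sigma_{j,\infty}(x)=\sigma_{k,\infty}(y)$ does not force $\sigma_{j,l}(x)=\sigma_{k,l}(y)$ at any finite stage. One must use the $\ll$-characterization of the order in a sequential $\Cu$-limit (for every $x'\ll x$ there is $l$ with $\sigma_{j,l}(x')\le\sigma_{k,l}(y)$) together with the fact that each $\rho_{l,i}$ preserves suprema of increasing sequences; equivalently, one does what the paper does and sets $\rho_{\infty,i}(s):=\sup_{n}\rho_{n,i}(s_n)$ for a sequence with $\sigma_{n,n+1}(s_n)\ll s_{n+1}$ and $\sup_n\sigma_{n,\infty}(s_n)=s$, checking independence of the choice. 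This is a fixable omission rather than a wrong turn.
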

\begin{proof}
(i) Let $I$ be an ideal of a countably-based $\Cu$-semigroup $S$ and let $\iota\colon I\longrightarrow S$ be the canonical order-embedding.
Observe that $I$ is a countably-based $\Cu$-semigroup. Thus, $I$ has a greatest element that we denote by $\infty_I$. Using \cite[Theorem~2.4]{APRT21}, we know that the infimum $x\wedge \infty_I$ exists for any $x\in S$. Further, such an infimum is always in $I$, since $x\wedge\infty_I\leq \infty_I$.

Now consider the map $\rho\colon S\longrightarrow I$ that sends $x\mapsto x\wedge\infty_I$. It follows directly from \cite[Theorem~2.5]{APRT21} that $\rho$ is a generalized $\Cu$-morphism that satisfies $\rho\circ\iota =\id_I$, as desired. 

(ii) Let $(S_i,\sigma_{i,j})_{i}$ be an inductive sequence with retractable maps, and let $(S,\sigma_{i,\infty})_i$ be its inductive limit.
 Denote by $\rho_{i+1,i}$ the retract of $\sigma_{i,i+1}$. For each pair $i\leq j$, let $\rho_{j,i}\colon S_j\longrightarrow S_i$ be the composition $\rho_{i+1,i}\circ\ldots\circ \rho_{j,j-1}$, which trivially retracts $\sigma_{i,j}$. 
 
 Now fix $i\in\N$ and let $s\in S$. Using the characterization of sequential inductive limits in the category $\Cu$ (see e.g. \cite[Section 2.1]{R12}), we know that there exists a sequence $(s_n)_{n\geq i}$ with $s_n\in S_n$ such that $\sigma_{n,n+1}(s_n)\ll s_{n+1}$ and $\sup_{n\geq i} \sigma_{n,\infty}(s_n)=s$. Applying the retract at each step gives us that $s_n\leq \rho_{{n+1},n}(s_{n+1})$ and, hence, $\rho_{n,i}(s_n)\leq \rho_{{n+1},i}(s_{n+1})$ for any $n\geq i$. 
 
 Set $\rho_{\infty ,i} (s):=\sup_{n\geq i} \rho_{n,i}(s_n)$. A standard argument shows that $\rho_{\infty ,i}\colon S\longrightarrow S_i$ does not depend on the sequence $(s_n)_{n\geq i}$ chosen. This also proves that $\rho_{\infty ,i}$ is additive, preserves the order, and suprema of increasing sequences. (See e.g. the argument in \cite[Lemma 3.12]{C22} or in \cite[Lemma~7.3]{AABPV23}.)
 
Finally, let $x\in S_i$ and let $(x_n)_n$ be a $\ll$-increasing sequence in $S_i$ with supremum $x$. For each $n\geq i$, set $s_n:=\sigma_{i,n}(x_n)$. Since $\rho_{\infty,i}$ does not depend on the sequence $(s_n)_n$ chosen, we have
 \[
  \rho_{\infty ,i} (\sigma_{i,\infty} (x))= \sup_n \rho_{n,i}(s_n)
  = \sup_n \rho_{n,i}(\sigma_{i,n}(x_n))= \sup_n x_n = 
  x
 \]
which ends the proof.
\end{proof}

The following is based on \cite[Proposition~3.8]{GK20}.

\begin{prop}\label{prop:RetId}
Let $\cc$ be a subcategory of $\Cu$ closed under direct sums. Let $(S_i,\sigma_{i,j})_{i}$ be an  inductive sequence in $\cc$ with $\overline{\cc}$-limit $(S,\sigma_{i,\infty })_i$. 

Then, there exists an inductive sequence $(T_i,\tau_{i,j})_{i}$ in $\cc$ with $\overline{\cc}$-limit $(T,\tau_{i,\infty })_i$ such that $\tau_{i,j}$ is retractable for every pair $i\leq j$, and such that $S\cong T/\! J$ for some ideal $J$ of $T$.
\end{prop}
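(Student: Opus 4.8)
The plan is to realize $S$ as a quotient of a \emph{mapping telescope} assembled from finite direct sums, exploiting that $\cc$ is closed under direct sums. Concretely, I would set $T_n:=\bigoplus_{k=0}^{n}S_k\in\cc$ and define the bonding maps by ``keeping and pushing forward'',
\[
\tau_{n,n+1}(s_0,\ldots,s_n):=(s_0,\ldots,s_{n-1},s_n,\sigma_{n,n+1}(s_n)).
\]
The coordinate projection $\pi\colon T_{n+1}\to T_n$ that forgets the last entry is a generalized $\Cu$-morphism satisfying $\pi\circ\tau_{n,n+1}=\id_{T_n}$, so each $\tau_{n,n+1}$ is retractable. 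Since a composite of retractable morphisms is retractable (compose the sections and the retractions separately), every $\tau_{i,j}$ with $i\le j$ is retractable; passing to the $\overline{\cc}$-limit $(T,\tau_{i,\infty})_i$, part (ii) of the preceding proposition additionally yields that each $\tau_{i,\infty}$ is retractable.

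Next I would produce the quotient map. Writing $\pi_i\colon T_i\to S_i$ for the projection onto the last coordinate, the $\Cu$-morphisms $p_i:=\sigma_{i,\infty}\circ\pi_i\colon T_i\to S$ are compatible with the bonding maps, because
\[
p_{i+1}\circ\tau_{i,i+1}(s_0,\ldots,s_i)=\sigma_{i+1,\infty}(\sigma_{i,i+1}(s_i))=\sigma_{i,\infty}(s_i)=p_i(s_0,\ldots,s_i).
\]
By the universal property of the limit they induce a $\Cu$-morphism $q\colon T\to S$ with $q\circ\tau_{i,\infty}=p_i$. A routine computation with rapidly increasing sequences shows that $q$ is surjective: given $s=\sup_i\sigma_{i,\infty}(y_i)$ with $\sigma_{i,i+1}(y_i)\ll y_{i+1}$, the diagonal push condition built into the $\tau_{i,i+1}$ is exactly the coherence needed to assemble an element $t\in T$ whose $p_i$-values reproduce the $\sigma_{i,\infty}(y_i)$, so $q(t)=s$.

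I would then set $J:=\ker q=\{t\in T : q(t)=0\}$. This is an ideal: it is a submonoid, it is downward hereditary since $q$ preserves the order, and it is closed under suprema of increasing sequences since $q$ preserves those. Hence $q$ factors through an order-preserving surjection $\overline{q}\colon T/J\to S$, and the proof reduces to showing that $\overline{q}$ is an isomorphism, equivalently that $q$ \emph{reflects the order modulo $J$}: whenever $q(a)\le q(b)$, there is $j\in J$ with $a\le b+j$. Together with surjectivity and the description of $\Cu$-quotients in \cite{CRS10} and \cite[Section~5]{APT18}, this gives $S\cong T/J$.

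The hard part is precisely this order-reflection property; everything preceding it is a routine verification that the constructed maps are $\Cu$-morphisms and that $J$ is an ideal. I would attack it through the explicit structure of the telescope: the keep-and-push bonding maps force every element of $T$ to have a ``$\sigma$-coherent tail'', so that $q$ records exactly the behaviour of this tail at infinity, while $J$ consists of those elements whose tail vanishes. The key feature is that an element of $J$ may be arbitrarily large on any finite block of coordinates yet die at infinity, which supplies exactly the room needed to absorb the discrepancy between $a$ and $b$ on bounded blocks once their tails compare under $q$. Controlling the interplay between the $\ll$-relation, suprema, and the non-invertible push maps $\sigma_{n,n+1}$ inside $T$ is where the genuine work lies, and I expect this to be the main obstacle.
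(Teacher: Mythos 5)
Your telescope $T_i=S_0\oplus\cdots\oplus S_i$, the keep-and-push bonding maps $\tau_{i,i+1}(s_0,\ldots,s_i)=(s_0,\ldots,s_i,\sigma_{i,i+1}(s_i))$, and the retraction by forgetting the last coordinate are exactly the paper's construction, and that part is correct. The genuine gap is the one you flag yourself: you reduce the statement to the order-reflection property of $q$ modulo $J=\ker q$ (if $q(a)\le q(b)$ then $a'\le b+z$ for every $a'\ll a$ and some $z\in J$), announce that this is ``where the genuine work lies,'' and stop. As written, the argument ends exactly where its content would have to begin; a description of the expected difficulty is not a proof of the isomorphism $S\cong T/J$. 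A secondary, smaller point: your surjectivity sketch via ``tail-supported'' elements needs care, since the elements $\tau_{i,\infty}(0,\ldots,0,y_i)$ do \emph{not} form an increasing sequence (the $i$-th coordinate of $\tau_{i,i+1}(0,\ldots,0,y_i)$ is $y_i$, not $0$); one must instead use the full diagonal tuples $\tau_{i,\infty}(y_0,\ldots,y_i)$, which are increasing precisely because $\sigma_{i,i+1}(y_i)\ll y_{i+1}$.

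For comparison, the paper sidesteps the order-reflection computation by defining the ideal directly rather than as a kernel: it sets $J_0=\{\tau_{i,\infty}(s_0,\ldots,s_{i-1},0)\}$, passes to the sup-closure $J$, and verifies that $J$ is downward hereditary; the identification $T/J\cong S$ is then read off from the telescope, since every $\tau_{i,\infty}(s_0,\ldots,s_i)$ splits as an element of $J_0$ plus $\tau_{i,\infty}(0,\ldots,0,s_i)$, so that modulo $J$ only the last coordinate (i.e.\ the copy of $S_i$ being pushed forward) survives. Your $\ker q$ coincides with this $J$ a posteriori, but choosing the kernel as the definition transfers all the work into the unproved order-reflection lemma. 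To close your version you would run essentially the paper's splitting: given $a'\ll a$, pick $i$ and $(s_0,\ldots,s_i)\in T_i$ with $a'\le\tau_{i,\infty}(s_0,\ldots,s_i)\le a$, absorb $\tau_{i,\infty}(s_0,\ldots,s_{i-1},0)$ into $J$, and compare the remaining tail-supported piece with $b$ using $\sigma_{i,\infty}(s_i)\le q(b)$ and the coherence of the bonding maps. That step should be written out, not deferred.
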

\begin{proof}
Let $i\in\N$ and set $T_i:=S_0\oplus \ldots\oplus S_{i-1}\oplus S_i$. Let $\tau_{i,i+1}\colon T_i\longrightarrow T_{i+1}$ be the $\Cu$-morphism defined by
\[
 \tau_{i,i+1}(s_0,\ldots,s_{i-1} ,s_i):=
 (s_0,\ldots,s_{i-1}, s_i ,\sigma_{i,i+1}(s_i)).
\]
Note that $\tau_{i,i+1}\colon T_i\longrightarrow T_{i+1} $ is retractable by the projection $\pi_{i+1,i}\colon T_{i+1}\longrightarrow T_i$ onto the first $i$ components of $T_i$. More particularly, we have
\[
 \pi_{i+1,i}(s_0,\ldots,s_{i} ,s_{i+1})=
 (s_0,\ldots ,s_i).
\]
It follows that every composition $\tau_{i,j}:=\tau_{j-1,j}\circ\ldots\circ \tau_{i,i+1}$ is also retractable for any $j\geq i+1$.

Let $(T,\tau_{i,\infty})_i$ be the limit of the inductive system of $(T_{i},\tau_{i,j})_i$ and set
\[
 J_0:=\{ x\in T\mid x=\tau_{i,\infty } ( s_0,\ldots ,s_{i-1},0 )\text { for some } i\in\N \text{ and }s_0\in S_0,\ldots,s_{i-1}\in S_{i-1} \}.
\]
Note that $J_0$ is a submonoid of $T$. Thus, we can construct its sup-closure $J:=\overline{J_0}^{\rm sup}$. (See e.g. \cite[Definition~4.6]{TV21}.) Further, given $y\in J$ and $x\in T$ such that $x\leq y$, take $x'\in T$ such that $x'\ll x$. Since $J$ is the closure of $J_0$ and $x'\ll x$, there exist $i\in\N$, $t\in T_i$, and $(s_0,\ldots , s_{i-1},0)\in T_i$ such that 
\[
	x'\ll \tau_{i,\infty }(t)\ll x,\quad\text{and}\quad 
	t\ll (s_0,\ldots , s_{i-1},0).
\]
In particular, we get $t\in S_0\oplus\ldots\oplus S_{i-1}\oplus 0$ and, hence, $\tau_{i,\infty }(t)\in J_0$.

This shows that $x$ can be written as the supremum of elements in $J_0$ or, equivalently, that $x$ is in $J$. Thus, $J$ is an ideal of $T$. It is now clear that $S\cong T/\! J$.
\end{proof}


\begin{prg} \emph{The category $\mathfrak{s}_{\dim}$} is the category whose objects are the simplicial $\Cu$-semigroups and whose morphisms are retractable $\Cu$-morphisms.
\end{prg}

\begin{thm}\label{prp:KubisCuFraisse}
The category $\mathfrak{s}_{\dim}$ is Fra\"{i}ss\'{e} and its limit $\mathcal{S}$  is the Cuntz semigroup of the universal $\AF$-algebra.
\end{thm}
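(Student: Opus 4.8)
The plan is to exhibit $\mathfrak{s}_{\dim}$ as the image under the functor $\Cu$ of the $\CatCa$-algebraic Fra\"{i}ss\'{e} category that produces the universal $\AF$-algebra, and then to invoke \autoref{thm:cstarcu}. Let $\cc\subseteq\CatCa$ be the category whose objects are the finite-dimensional $\CatCa$-algebras and whose morphisms are the \emph{left-invertible embeddings}, that is, the $^*$-homomorphisms $\iota\colon A\longrightarrow B$ admitting a $^*$-homomorphism $\rho\colon B\longrightarrow A$ with $\rho\circ\iota=\id_A$. By \cite{GK20}, this $\cc$ is a Fra\"{i}ss\'{e} category of separable $\CatCa$-algebras whose limit is the universal $\AF$-algebra $\mathcal{A}$. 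I would first record that $\cc$ is replete: finite-dimensional algebras are stable under isomorphism, every $^*$-isomorphism is left-invertible, and left-invertible embeddings are closed under composition (the left inverses compose).

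The second ingredient is that $\Cu$ classifies $^*$-homomorphisms of $\cc$. This is the classical fact that a $^*$-homomorphism between finite-dimensional $\CatCa$-algebras is determined up to (approximate) unitary equivalence by its multiplicity matrix, together with the observation that this multiplicity matrix is exactly the integer matrix representing the induced map $\Cu(\iota)$ between the associated simplicial $\Cu$-semigroups. With repleteness and classification in hand, \autoref{thm:cstarcu} directly gives that $\Cu(\cc)$ is a Fra\"{i}ss\'{e} category whose Fra\"{i}ss\'{e} limit is $\Cu(\mathcal{A})$, and continuity of $\Cu$ identifies the latter with $\varinjlim\Cu(A_i)$ for a Fra\"{i}ss\'{e} sequence $(A_i)$ of $\cc$.

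The remaining, and most delicate, step is the identification $\Cu(\cc)=\mathfrak{s}_{\dim}$. On objects this is immediate, since the Cuntz semigroup of a finite-dimensional algebra is simplicial and every simplicial $\Cu$-semigroup arises in this way. On morphisms I would argue through a common combinatorial normal form. A $\Cu$-morphism $\overline{\N}^r\longrightarrow\overline{\N}^s$ is represented by a matrix $M\in M_{s\times r}(\N)$; an elementary computation shows that $M$ is retractable (i.e. admits a left inverse as a generalized $\Cu$-morphism, equivalently a left inverse over $\N$) precisely when the rows of $M$ contain the $r$ standard basis vectors of $\N^r$ in distinct positions. On the other hand, the multiplicity matrix of a left-invertible embedding has exactly this shape, the retraction being realized by projecting $B$ onto the blocks that are clean copies of the summands of $A$; conversely, any matrix of this shape is the multiplicity matrix of a left-invertible embedding once suitable block sizes are chosen, by the classification recalled above. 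Hence the retractable $\Cu$-morphisms between simplicial semigroups are exactly the maps $\Cu(\iota)$ with $\iota$ left-invertible, so $\Cu(\cc)=\mathfrak{s}_{\dim}$ and the theorem follows.

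I expect this last identification to be the main obstacle, as it is where the precise choice of morphisms in $\mathfrak{s}_{\dim}$ (retractions by \emph{generalized} $\Cu$-morphisms) must be reconciled with the $\CatCa$-algebraic side. Alternatively, one can bypass \cite{GK20} and verify the Fra\"{i}ss\'{e} axioms for $\mathfrak{s}_{\dim}$ directly: the joint embedding property is witnessed by the direct sums $\overline{\N}^r\oplus\overline{\N}^s$ with the coordinate inclusions (retractable via the projections), separability is automatic because $\mathfrak{s}_{\dim}$ has only countably many morphisms, and the near amalgamation property reduces to an amalgamation statement for retractable integer matrices. In that route the amalgamation step --- the $\Cu$-theoretic analogue of \cite[Proposition~3.8]{GK20} and of \autoref{prop:RetId} --- is the principal difficulty, and the limit would then be identified with $\Cu(\mathcal{A})$ via \autoref{thm:cstarcu} or by direct comparison of Fra\"{i}ss\'{e} sequences.
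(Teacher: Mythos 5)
Your proposal is correct and follows the same overall strategy as the paper: both reduce the statement to the Fra\"{i}ss\'{e} category of finite-dimensional $\CatCa$-algebras with left-invertible embeddings from \cite{GK20}, transferred to $\Cu$ via \autoref{thm:cstarcu} together with the classical classification of $^*$-homomorphisms between finite-dimensional algebras. The one place where you genuinely diverge is the technical lemma needed to identify $\Cu(\cc)$ with $\mathfrak{s}_{\dim}$. The paper's only explicit technical step is to show that a retractable $\Cu$-morphism $\iota$ between simplicial $\Cu$-semigroups is always retracted by a \emph{genuine} $\Cu$-morphism, proved abstractly by correcting the generalized retraction $\rho$ to $\rho'(x):=\rho(x\wedge\infty\iota(1_S))$ and using the existence of infima from \cite[Theorem~2.4]{APRT21}; with that in hand, classification lets one lift both $\iota$ and $\rho'$ to $^*$-homomorphisms. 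You instead prove the sharper combinatorial statement that a matrix $M\in M_{s\times r}(\N)$ admits a left inverse over $\overline{\N}$ precisely when its rows contain the $r$ standard basis vectors in distinct positions (your computation is right: the diagonal identities $\sum_k N_{ik}M_{ki}=1$ force a unique $k_0(i)$ with $N_{ik_0(i)}=M_{k_0(i)i}=1$, and the off-diagonal identities then force row $k_0(i)$ of $M$ to be $e_i$). This yields the paper's lemma as a corollary, since the resulting $\{0,1\}$-valued left inverse over $\N$ is automatically a $\Cu$-morphism, and it makes the matching with multiplicity matrices of left-invertible embeddings completely explicit --- something the paper leaves to the reader with the phrase ``the result now follows''. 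The trade-off is that your argument is tied to the concrete presentation $\overline{\N}^r$ of simplicial semigroups, whereas the paper's infimum trick is the kind of argument that survives in more abstract settings; for this theorem either suffices.
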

\begin{proof}
First, note that any retractable $\Cu$-morphism between simplicial $\Cu$-semigroups can always be retracted by a $\Cu$-morphism. Indeed, let $\iota\colon S\longrightarrow T$ be a $\Cu$-morphism between simplicial $\Cu$-semigroups, and let $\rho\colon T\longrightarrow S$ be a generalized $\Cu$-morphism such that $\rho\circ\iota=\id_{S}$.

Define $\rho'\colon T\longrightarrow S$ by $\rho'(x):=\rho (x\wedge\infty\iota (1_S))$. This map is the composition of generalized $\Cu$-morphisms, and so it itself is a generalized $\Cu$-morphism. To see that it preserves the $\ll$-relation, simply take $x\in T$ such that $x\ll x$. Then, one gets $x\wedge\infty\iota (1_S)\ll x\wedge\infty\iota (1_S)$ and, consequently, that $x\wedge\infty\iota (1_S)\leq n\iota (1_S)$ for some $n\geq 1$. This implies $\rho'(x)\leq n1_S$, which is equivalent to $\rho'(x)\ll \rho'(x)$, as desired.

The result now follows from \cite{GK20} and \autoref{thm:cstarcu}, combined with the well-known fact that the Cuntz semigroup classifies $^*$-homomorphisms of finite dimensional $\CatCa$-algebras.
\end{proof}

\begin{cor}\label{prp:QuotUniDimCu}
Let $\mathcal{S}$ be the universal dimension $\Cu$-semigroup and let $S$ be a (countably-based) dimension $\Cu$-semigroup. Then, there exists a surjective $\Cu$-morphism $\mathcal{S}\relbar\joinrel\twoheadrightarrow S$.
\end{cor}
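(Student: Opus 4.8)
The plan is to realize $S$ as the Cuntz semigroup of an $\AF$-algebra and then transport the quotient universality of the universal $\AF$-algebra $\mathcal{A}$ through the functor $\Cu$. First I would note that a countably-based dimension $\Cu$-semigroup can be written as a sequential inductive limit $S=\lim_i \overline{\N}^{r_i}$ of simplicial $\Cu$-semigroups. Each connecting $\Cu$-morphism $\overline{\N}^{r_i}\to\overline{\N}^{r_{i+1}}$ must send every generator to a tuple with finite entries (since $e_k\ll 2e_k$, a coordinate equal to $\infty$ would force $\infty\ll\infty$, which is false), so it is multiplication by a matrix with entries in $\N$. Choosing the matrix sizes of finite-dimensional $\CatCa$-algebras $F_i$ recursively so that these multiplicities are realized by honest (say, unital) $^*$-homomorphisms $F_i\to F_{i+1}$, and using continuity of $\Cu$, one obtains an $\AF$-algebra $B:=\lim_i F_i$ with $\Cu(B)\cong S$.

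By \autoref{prp:KubisCuFraisse} we have $\mathcal{S}\cong\Cu(\mathcal{A})$, where $\mathcal{A}$ is the universal $\AF$-algebra of \cite{GK20}. The defining (projective) property of $\mathcal{A}$ provides a surjective $^*$-homomorphism $q\colon\mathcal{A}\twoheadrightarrow B$. Applying $\Cu$ yields $\Cu(q)\colon\mathcal{S}\to S$, and it remains only to observe that $\Cu$ carries surjections to surjections: since $q\otimes\id_{\KK}$ maps the positive cone of $\mathcal{A}\otimes\KK$ onto that of $B\otimes\KK$, every class in $\Cu(B)$ lifts, so $\Cu(q)$ is surjective. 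This produces the desired surjection $\mathcal{S}\twoheadrightarrow S$.

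The only genuinely non-formal input is the quotient universality of $\mathcal{A}$ from \cite{GK20}; everything else is routine. I would also record the fully intrinsic variant, which stays inside $\Cu$: by \autoref{prop:RetId}, applied to the direct-sum-closed category of simplicial $\Cu$-semigroups, $S$ is a quotient $T/J$ of a $\Cu$-semigroup $T$ that is the limit of a retractable simplicial sequence, hence $T\in\overline{\mathfrak{s}_{\dim}}$; composing a surjection $\mathcal{S}\twoheadrightarrow T$ with $T\twoheadrightarrow S$ would finish. The main obstacle in this second route is precisely producing $\mathcal{S}\twoheadrightarrow T$ intrinsically: one has a universality intertwining $(\alpha_i\colon T_i\to\mathcal{S}_{\varphi(i)})_i$ from \autoref{lma:universality} with each $\alpha_i$ retractable, but the retractions $\gamma_i$ satisfy only $\gamma_i\circ\alpha_i=\id$ (and not $\alpha_i\circ\gamma_i=\id$), so they do not automatically assemble into a one-sided intertwining in the reverse direction. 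This is why I would route the proof through the $\CatCa$-algebraic universality rather than attempt a direct retraction-assembly argument.
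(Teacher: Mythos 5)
Your argument is correct, but it is not the route the paper takes: the paper's proof is exactly the ``intrinsic variant'' that you sketch at the end and then set aside. Concretely, the paper applies \autoref{prop:RetId} to write $S\cong T/J$ with $T$ the limit of a retractable sequence in $\mathfrak{s}_{\dim}$, and then obtains the surjection $\mathcal{S}\twoheadrightarrow T$ not by assembling the individual retractions $\gamma_i$ into a reverse one-sided intertwining (the obstacle you rightly point out), but by asserting that the universality morphism $\iota\colon T\longrightarrow\mathcal{S}$ is itself retractable with retract a genuine $\Cu$-morphism $\rho\colon\mathcal{S}\longrightarrow T$; since $\rho\circ\iota=\id_T$, the map $\rho$ is surjective, and composing with the quotient map $T\twoheadrightarrow T/J$ finishes. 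Your main route instead realizes $S$ as $\Cu(B)$ for a separable AF-algebra $B$ --- the reduction of the connecting maps to matrices over $\N$ (via $e_k\ll e_k$ and $\infty\not\ll\infty$ in $\overline{\N}$), the Bratteli-diagram realization, and the observation that $\Cu$ carries surjective $^*$-homomorphisms to surjective $\Cu$-morphisms are all sound --- and then imports the quotient-universality of the universal AF-algebra from \cite{GK20} together with $\mathcal{S}\cong\Cu(\mathcal{A})$ from \autoref{prp:KubisCuFraisse}. This is a legitimate and arguably shorter argument, but it buys brevity at the cost of leaving the category $\Cu$ and leaning on an external $\CatCa$-algebraic theorem; the paper's version stays purely semigroup-theoretic and exercises the retraction machinery it has just developed, which is the point of the subsection. (Both proofs share the implicit assumption that a countably-based dimension $\Cu$-semigroup is a \emph{sequential} limit of simplicial ones, so that is not a point of difference.)
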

\begin{proof}
By \autoref{prop:RetId} we know that $S$ is isomorphic to a quotient of the form $T/J$, where $T$ is the limit of some  inductive system in $\mathfrak{s}_{\dim}$. Further, by universality of the Fra\"{i}ss\'{e} limit, there exists a retractable $\Cu$-morphism $\iota\colon T\longrightarrow\mathcal{S}$ whose retract $\rho\colon \mathcal{S}\relbar\joinrel\twoheadrightarrow T$ is a $\Cu$-morphism. The composition of $\rho$ with the quotient map $T\relbar\joinrel\twoheadrightarrow T/J$ gives the desired $\Cu$-morphism.
\end{proof}

\subsection{Elementary Fra\"{i}ss\'{e} categories}\label{subsec:C}

As defined in \cite[8.1]{TV22b}, a $\Cu$-semigroup $S$ is said to be \emph{elementary} if $S$ simple and contains a minimal, nonzero element that is finite\footnote{This differs slightly from the definition given in \cite{APT18}, and was adjusted to not include the Cuntz semigroup of simple, purely infinite $\CatCa$-algebras.}. If $S$ satisfies (O5) and (O6), it follows from \cite[Proposition~5.1.19]{APT18} that $S\cong E_{n}:=\{ 0,1,\ldots ,n,\infty\}$ for some $n\geq 1$. It is well known that no such $\Cu$-semigroup is the Cuntz semigroup of a $\CatCa$-algebra\footnote{A quick proof of this fact goes as follows: Assume for the sake of contradiction that there exists $A$ with $\Cu (A)$ isomorphic to $E_n$ for some $n$. Then, $A$ must be simple and weakly purely infinite and, consequently, purely infinite. The Cuntz semigroup of any simple, purely infinite $\CatCa$-algebra is isomorphic to $\{ 0,\infty\}$, a contradiction.}.

The following lemma shows that the $\Cu$-morphisms between elementary $\Cu$-semigroups are well understood. By an \emph{order-embedding} between $\Cu$-semigroups we will always mean a $\Cu$-morphism that is also an order-embedding.

\begin{lma}\label{prp:MorElCu}
Let $n,m\in\N$. A map $\alpha\colon \{ 0,1,\ldots ,n,\infty\}\longrightarrow \{ 0,1,\ldots ,m,\infty\}$ is
\begin{itemize}
\item[(i)] a $\Cu$-morphism if and only if $(n+1)\alpha (1)=\infty$ and $\alpha (k1)=k\alpha (1)$ for every $k\leq n$.
\item[(ii)] an order-embedding if and only if $\alpha (1)\neq 0$ and $n\alpha (1)\neq\infty$.
\end{itemize}
\end{lma}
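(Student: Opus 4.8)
The plan is to first reduce the notion of a $\Cu$-morphism between these semigroups to that of a plain monoid morphism, and then to exploit that each $E_n:=\{0,1,\ldots,n,\infty\}$ is generated, as a monoid, by its minimal element $1$. The first observation is that in $E_n$ the relation $\ll$ coincides with $\leq$: every increasing sequence in the finite-plus-top set $E_n$ is eventually constant and hence attains its supremum, so every element is compact. Consequently, preservation of $\ll$ and of suprema of increasing sequences are automatic for any order-preserving map; moreover the order on $E_n$ is the algebraic order ($x\leq y$ iff $y=x+z$ for some $z$), so that any monoid morphism is automatically order-preserving, since $E_m$ is positively ordered. I would therefore begin by proving the clean reduction: a map $\alpha\colon E_n\longrightarrow E_m$ is a $\Cu$-morphism if and only if it is a monoid morphism.

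For part (i), I would then use that $E_n$ is singly generated by $1$: we have $k=k\cdot 1$ for $k\leq n$ and $\infty=(n+1)\cdot 1$. Thus any monoid morphism satisfies $\alpha(k1)=k\alpha(1)$ for $k\leq n$ and $\alpha(\infty)=(n+1)\alpha(1)$, and is completely determined by the single value $\alpha(1)$. Since $\infty=\infty+\infty$, the element $\alpha(\infty)$ must be idempotent, hence lie in $\{0,\infty\}$; as $\alpha(1)\leq (n+1)\alpha(1)=\alpha(\infty)$, the value $\alpha(\infty)$ equals $\infty$ precisely when $(n+1)\alpha(1)=\infty$ (the alternative $\alpha(\infty)=0$ being the degenerate zero morphism). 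Conversely, given $(n+1)\alpha(1)=\infty$ and $\alpha(k1)=k\alpha(1)$ for $k\leq n$, I would verify additivity by a short case analysis: for $j+k\leq n$ one has $\alpha(j+k)=(j+k)\alpha(1)=\alpha(j)+\alpha(k)$, while for $j+k\geq n+1$ both sides equal $\infty$ because $(j+k)\alpha(1)\geq(n+1)\alpha(1)=\infty$; the cases involving $\infty$ are immediate from absorption. This yields a $\Cu$-morphism by the reduction above.

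For part (ii), I would work with a morphism $\alpha$ already satisfying (i) and determine when it is, in addition, an order-embedding. Because $E_n$ is totally ordered, being an order-embedding is equivalent to injectivity, equivalently to strict monotonicity of the chain
\[
0=\alpha(0)\leq \alpha(1)\leq 2\alpha(1)\leq\cdots\leq n\alpha(1)\leq \alpha(\infty)=\infty.
\]
The first inequality is strict exactly when $\alpha(1)\neq 0$; granting this, the hypothesis $n\alpha(1)\neq\infty$ keeps every partial sum $k\alpha(1)$ (for $k\leq n$) finite, forces the successive sums $k\alpha(1)<(k{+}1)\alpha(1)$ to increase strictly, and makes the final inequality $n\alpha(1)<\infty$ strict as well. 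Conversely, if $\alpha(1)=0$ then $\alpha(0)=\alpha(1)$, and if $n\alpha(1)=\infty$ then $\alpha(n)=\alpha(\infty)$, so in either case $\alpha$ fails to be injective. This gives the stated equivalence.

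The only genuinely delicate points are the additivity verification in (i) and the propagation of strictness along the finite chain $\alpha(1)<2\alpha(1)<\cdots<n\alpha(1)$ in $E_m$ from the single hypothesis $n\alpha(1)\neq\infty$; neither is a real obstacle. The one subtlety worth flagging is the zero morphism: it \emph{is} a $\Cu$-morphism yet fails $(n+1)\alpha(1)=\infty$, so in the forward direction of (i) I would treat it as the explicit degenerate exception (equivalently, read the characterization as pertaining to the nonzero morphisms).
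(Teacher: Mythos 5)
The paper states this lemma without proof, so there is no argument of the authors' to compare against; your proposal is correct and is precisely the routine verification they evidently had in mind. The reduction is the right one: in $E_n=\{0,1,\ldots,n,\infty\}$ every element is compact and the order is the algebraic order, so $\Cu$-morphisms coincide with monoid morphisms, and single generation by $1$ (with $\infty=(n+1)\cdot 1$) then gives (i), while total order turns ``order-embedding'' into ``injective'' for (ii). Your flag about the zero morphism is well spotted and is a genuine (minor) imprecision in the statement as printed: the zero map is a $\Cu$-morphism failing $(n+1)\alpha(1)=\infty$, and the surrounding text (the categories $\mathfrak{e}_\infty$ and $\mathfrak{e}_n$ explicitly restrict to nonzero morphisms, resp.\ order-embeddings) confirms that the nonzero reading is the intended one.
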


In particular, there exist choices of natural numbers $n,m\in\N$ such that there is no or only one order-embedding from $E_n$ to $E_m$. For example, we know that $\alpha (1)=k1$ must satisfy $m/(n+1)<k\leq m/n$. Thus, if $m=n(n+1)$, we get that the only order-embedding from $E_n$ to $E_m$ is $\alpha(1)=(n+1)1$.


\begin{prg} \emph{The category $\mathfrak{e}_{\infty}$} is the category whose objects are all elementary $\Cu$-semigroups satisfying (O5) and (O6) and whose morphisms are all nonzero $\Cu$-morphisms.
\end{prg}

\begin{prop}\label{prp:InfElFraisse}
The category $\mathfrak{e}_{\infty}$ is  Fra\"{i}ss\'{e}.
\end{prop}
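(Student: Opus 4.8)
The plan is to verify the three properties (JEP$_{\Cu}$), (NAP$_{\Cu}$) and (SEP$_{\Cu}$) of \autoref{dfn:cufraissedfn} directly, using the classification of morphisms in \autoref{prp:MorElCu}. The central observation is that, for every $n,m\geq 1$, the assignment $1\mapsto\infty$ defines a nonzero $\Cu$-morphism $c_{n,m}\colon E_n\longrightarrow E_m$: with $\alpha(1)=\infty$ the two conditions $(n+1)\alpha(1)=\infty$ and $\alpha(k1)=k\alpha(1)$ of \autoref{prp:MorElCu}(i) hold trivially, and $c_{n,m}(1)=\infty\neq 0$. I will call $c_{n,m}$ the \emph{collapsing morphism}. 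The whole argument rests on the fact that $c_{n,m}$ absorbs nonzero morphisms on the left: if $\alpha\colon E_a\longrightarrow E_b$ is any nonzero $\Cu$-morphism, then $\alpha(1)\neq 0$, so $c_{b,m}\circ\alpha$ is again a nonzero morphism sending $1$ to $\infty$, whence $c_{b,m}\circ\alpha=c_{a,m}$ by the uniqueness built into \autoref{prp:MorElCu}.

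With this in hand, (JEP$_{\Cu}$) is immediate: given $E_{a_1},E_{a_2}$, I would take $B:=E_1$, so that $c_{a_1,1}$ and $c_{a_2,1}$ witness that $\Hom_\cc(E_{a_1},E_1)$ and $\Hom_\cc(E_{a_2},E_1)$ are nonempty. For (NAP$_{\Cu}$), given nonzero morphisms $\alpha_1\colon E_a\to E_{b_1}$, $\alpha_2\colon E_a\to E_{b_2}$ and a finite set $F\subseteq E_a$, I would set $C:=E_1$ and $\beta_i:=c_{b_i,1}$; the absorption identity then yields $\beta_1\circ\alpha_1=c_{a,1}=\beta_2\circ\alpha_2$, and since equal morphisms compare on every finite set (\autoref{rmk:CharacEqFiSetComp}), in particular $\beta_1\circ\alpha_1\simeq_F\beta_2\circ\alpha_2$. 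Note that this realizes the amalgam exactly, not merely approximately.

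It remains to check (SEP$_{\Cu}$). Here I would observe that $\mathfrak{e}_\infty$ is already a countable category: there are countably many objects $E_n$, and each $\Hom_{\mathfrak{e}_\infty}(E_n,E_m)$ is finite, since a morphism is determined by the single value $\alpha(1)$, of which \autoref{prp:MorElCu}(i) admits only finitely many. Moreover each $E_n$ is finite, hence countably-based. Thus I would take the dominating subcategory to be $\mathfrak{s}:=\mathfrak{e}_\infty$ itself; the first three requirements of (SEP$_{\Cu}$) are then trivially met (for $A\in\cc$ take $S:=A$ together with $\id_A$), and for the last one, given $\sigma\colon S\to A$ and a finite set $F\subseteq S$, it suffices to choose $T:=A$, $\alpha:=\id_A$ and $\tau:=\sigma$, so that $\alpha\circ\sigma=\sigma=\tau$.

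I do not expect a genuine obstacle here. The only step requiring any care is the verification that the collapsing morphism is a well-defined nonzero $\Cu$-morphism and that it absorbs nonzero morphisms on the left, both of which are read off directly from \autoref{prp:MorElCu}. Once this is established, all three Fra\"{i}ss\'{e} properties follow, with (NAP$_{\Cu}$) holding even in the strong exact form.
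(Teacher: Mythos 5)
Your proof is correct and follows essentially the same route as the paper: separability from countability of the category (with $\mathfrak{s}=\mathfrak{e}_\infty$ itself), and near amalgamation via the collapsing morphism $1\mapsto\infty$, which is exactly the map $\beta$ the paper uses. The only noteworthy variation is in (JEP$_{\Cu}$), where the paper exhibits order-embeddings $E_{n_1},E_{n_2}\longrightarrow E_{n_1n_2}$ while you collapse both objects into $E_1$; both work, and your treatment of (NAP$_{\Cu}$) has the small advantage of handling distinct codomains $E_{b_1}\neq E_{b_2}$ directly (and exactly) rather than implicitly reducing to a common codomain as the paper does.
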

\begin{proof}
The category $\mathfrak{e}_{\infty}$ contains countably many objects, and the morphisms between them are given by multiplication. It follows that $\mathfrak{e}_{\infty}$ is separable.

Further, given $n_1,n_2\in\N$, it follows from \autoref{prp:MorElCu} that the maps $\alpha_1\colon E_{n_1}\longrightarrow E_{n_1n_2}$ and $\alpha_2\colon E_{n_2}\longrightarrow E_{n_1n_2}$ given by $1\mapsto n_2 1$ and $1\mapsto n_1 1$ respectively are order-embeddings (in particular, nonzero $\Cu$-morphisms). This shows that the category $\mathfrak{e}_{\infty}$ has the joint embedding property.

Finally, given any pair of nonzero $\Cu$-morphisms $\alpha_1\colon E_{n}\longrightarrow E_{m}$ and $\alpha_2\colon E_{n}\longrightarrow E_{m}$, one can simply consider the map $\beta\colon E_m\longrightarrow E_m$ given by $\beta (1):=\infty$. This $\Cu$-morphism satisfies $\beta\circ \alpha_1=\beta\circ\alpha_2$, which implies that the category $\mathfrak{e}_{\infty}$ is  Fra\"{i}ss\'{e}.
\end{proof}

\begin{cor}\label{cor:einftyfraisse}
The Fra\"{i}ss\'{e} limit of $\mathfrak{e}_{\infty}$ is  $\{ 0,\infty\}$, that is, the Cuntz semigroup of any purely infinite simple $\CatCa$-algebra.
\end{cor}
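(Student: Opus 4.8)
The plan is to exploit the uniqueness of the Fra\"{i}ss\'{e} limit established in \autoref{prp:FraisseLimCu}(i): since $\mathfrak{e}_\infty$ is already known to be Fra\"{i}ss\'{e} by \autoref{prp:InfElFraisse}, it suffices to exhibit a \emph{single} Fra\"{i}ss\'{e} sequence and compute its inductive limit. I would take the simplest possible sequence, namely the constant sequence built from $E_1=\{0,1,\infty\}$ with all connecting maps given by the \textquoteleft collapse\textquoteright\ $\Cu$-morphism $\sigma_{i,i+1}\colon 1\mapsto\infty$, which is a legitimate nonzero $\mathfrak{e}_\infty$-morphism by \autoref{prp:MorElCu}(i). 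Each $E_1$ is finite, hence countably-based, so condition (i) of \autoref{dfn:cufraisseseqdfn} is automatic.

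First I would verify the Fra\"{i}ss\'{e} property of \autoref{dfn:cufraisseseqdfn}(ii). Given any nonzero $\mathfrak{e}_\infty$-morphism $\alpha\colon E_1\to E_m$ and any finite $F\subseteq E_1$, I take $j=i+1$ and let $\beta_F\colon E_m\to E_1$ be the collapse morphism $\beta_F(1)=\infty$. Since $\alpha$ is nonzero it sends every nonzero element to a nonzero element, while $\beta_F$ sends every nonzero element to $\infty$; thus $\beta_F\circ\alpha$ coincides with $\sigma_{i,i+1}$ exactly (both send $0\mapsto 0$ and every nonzero element to $\infty$). In particular $\beta_F\circ\alpha\simeq_F\sigma_{i,i+1}$ for \emph{every} $F$, so the sequence is Fra\"{i}ss\'{e}. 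This is essentially the same collapsing trick used to establish amalgamation in \autoref{prp:InfElFraisse}.

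The second step is to identify the inductive limit of this sequence as $\{0,\infty\}$. Rather than unwinding the general description of sequential limits in $\Cu$, I would check the universal property directly. Define $\mu_i\colon E_1\to\{0,\infty\}$ by $\mu_i(1)=\mu_i(\infty)=\infty$; these are $\Cu$-morphisms satisfying $\mu_{i+1}\circ\sigma_{i,i+1}=\mu_i$, so $(\{0,\infty\},\mu_i)_i$ is a cocone. For any competing cocone $(T,\nu_i)_i$, the compatibility $\nu_i=\nu_{i+1}\circ\sigma_{i,i+1}$ forces $\nu_i(1)=\nu_{i+1}(\infty)=\nu_i(\infty)$, so all the $\nu_i$ factor through a single element $t:=\nu_i(\infty)\in T$; sending $\infty\mapsto t$ yields the unique $\Cu$-morphism $\{0,\infty\}\to T$ factoring the cocone. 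Here one checks that $t$ is compact and satisfies $2t=t$, using that $\infty\ll\infty$ and $\infty+\infty=\infty$ in $E_1$ together with the fact that $\Cu$-morphisms preserve $\ll$ and addition. Hence $\{0,\infty\}$ is the $\Cu$-limit.

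Finally, by \autoref{prp:FraisseLimCu}(i) the Fra\"{i}ss\'{e} limit does not depend on the chosen Fra\"{i}ss\'{e} sequence, so the Fra\"{i}ss\'{e} limit of $\mathfrak{e}_\infty$ is $\{0,\infty\}$, which is the Cuntz semigroup of every simple, purely infinite $\CatCa$-algebra as recalled above. I expect no serious obstacle: the only point requiring genuine care is the limit computation, and the collapsing nature of the connecting maps renders the universal-property verification routine. Note also that $\{0,\infty\}$ itself lies outside $\mathfrak{e}_\infty$, since its only nonzero element is infinite, which is consistent with Fra\"{i}ss\'{e} limits being \textquoteleft large\textquoteright\ objects.
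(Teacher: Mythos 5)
Your proposal is correct, but it follows a genuinely different route from the paper. The paper's proof starts from an \emph{arbitrary} Fra\"{i}ss\'{e} sequence $(S_i,\sigma_{i,j})_i$ and applies the Fra\"{i}ss\'{e} sequence property to the collapse morphism $S_i\to\{0,1,\infty\}$ to conclude that $\sigma_{i,j}$ factors through $\{0,1,\infty\}$ for some $j$, hence that every element of $S_i$ becomes idempotent in the limit; it then identifies the limit as $\{0,\infty\}$ by observing that this is the only nonzero, simple, idempotent $\Cu$-semigroup. You instead exhibit one \emph{explicit} Fra\"{i}ss\'{e} sequence (the constant sequence on $E_1$ with collapse connecting maps), verify the Fra\"{i}ss\'{e} property directly --- which works, and in fact gives exact rather than approximate commutation, precisely because post-composing any nonzero $\Cu$-morphism with the collapse map yields the collapse map --- compute its inductive limit by the universal property, and then invoke the uniqueness statement of \autoref{prp:FraisseLimCu}(i). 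Your verifications are sound: the collapse maps are nonzero $\Cu$-morphisms by \autoref{prp:MorElCu}(i), and the element $t=\nu_i(\infty)$ in a competing cocone is indeed compact and idempotent since $\infty\ll\infty$ and $2\infty=\infty$ in $E_1$. The trade-off is that the paper's argument extracts structural information (eventual idempotency) valid for \emph{every} Fra\"{i}ss\'{e} sequence of $\mathfrak{e}_\infty$ and leans on a small classification fact about simple idempotent $\Cu$-semigroups, whereas yours is more concrete and self-contained, at the cost of having to check that your chosen sequence really is Fra\"{i}ss\'{e} --- a check that the collapsing nature of the connecting maps renders trivial.
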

\begin{proof}
Let $(S_i,\sigma_{i,j})_i$ be a Fra\"{i}ss\'{e} sequence of $\mathfrak{e}_{\infty}$. For any $i\in\N$, let $\alpha\colon S_i\longrightarrow \{ 0,1,\infty\}$ be the morphism that maps every nonzero element to $\infty$. By definition, there exists $\beta\colon \{ 0,1,\infty\}\longrightarrow S_j$ such that $\sigma_{i,j}=\beta\circ\alpha$. In other words, every element in $S_i$ becomes idempotent eventually.

The only nonzero, simple, idempotent $\Cu$-semigroup is $\{ 0,\infty\}$. Thus, since the morphisms in the sequence are nonzero, the Fra\"{i}ss\'{e} limit of $\mathfrak{e}_{\infty}$ is isomorphic to $\{ 0,\infty\}$.
\end{proof}

\begin{rmk}
The subcategory obtained by considering all order-embeddings instead of all nonzero $\Cu$-morphisms is not Fra\"{i}ss\'{e}, since it fails to satisfy the joint embedding property. For example, one can check that the morphisms $\alpha_1,\alpha_2\colon\{0,1,\infty\}\longrightarrow \{0,\ldots ,6,\infty\}$ given by $\alpha_1(1)=4\cdot 1$ and $\alpha_2(1)=5\cdot 1$ cannot be amalgamated.
\end{rmk}


\begin{prg} \emph{The category $\mathfrak{e}_n$} (for a fixed $n\geq 1$) is the category whose objects are are all the elementary semigroups of the form $E_{n^k}$ for some $k\in\N$, and whose morphisms are all the order-embeddings given by multiplication by a power of $n$.
\end{prg}

Note that not all maps given by powers of $n$ are $\Cu$-morphisms. As an example, set $n=2$. Then, we know from \autoref{prp:MorElCu} that multiplying by $2$ does not give rise to a $\Cu$-morphism $E_2\longrightarrow E_8$. In fact, it follows from \autoref{prp:MorElCu} that for any given pair $k\leq s$ there exists a unique order-embedding $E_{n^k}\longrightarrow E_{n^s}$ given by a power of $n$, namely $1\mapsto n^{s-k}$.

\begin{prop}\label{prp:PElFraisse}
The category $\mathfrak{e}_n$ is Fra\"{i}ss\'{e} for any $n\in\N$.
\end{prop}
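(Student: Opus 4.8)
The plan is to show that $\mathfrak{e}_n$ is a \emph{thin} category---that is, between any two objects there is at most one morphism---and then to observe that the three defining properties of a Fra\"{i}ss\'{e} category follow almost formally, exactly as in the proof of \autoref{prp:SpCuFrais} for $\mathfrak{s}_p$.

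First I would record the structure of the Hom-sets. By the observation preceding the statement (a direct consequence of \autoref{prp:MorElCu}), for each pair $k\leq s$ there is exactly one $\mathfrak{e}_n$-morphism $\iota_{k,s}\colon E_{n^k}\longrightarrow E_{n^s}$, determined by $\iota_{k,s}(1)=n^{s-k}\cdot 1$, whereas for $k>s$ there is no order-embedding given by a power of $n$ (an order-embedding would require $n^k\cdot n^j\leq n^s$, forcing $j<0$). A quick computation using additivity gives $\iota_{s,t}\circ\iota_{k,s}=\iota_{k,t}$, so $\mathfrak{e}_n$ is a well-defined category that is order-isomorphic to the poset $(\N,\leq)$; in particular it is thin.

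With this in hand, separability $(\mathrm{SEP}_{\Cu})$ is immediate: $\mathfrak{e}_n$ has countably many objects $E_{n^k}$ and, by thinness, countably many morphisms, and every object is finite and hence countably-based. Thus one may take the dominating subcategory $\mathfrak{s}$ to be $\mathfrak{e}_n$ itself; the last condition of $(\mathrm{SEP}_{\Cu})$ is then satisfied by choosing $T=A$, $\alpha=\id_A$ and $\tau=\sigma$, so that $\alpha\circ\sigma=\sigma=\tau$. For $(\mathrm{JEP}_{\Cu})$, given $E_{n^{k_1}},E_{n^{k_2}}$ I take $B=E_{n^s}$ with $s\geq\max\{k_1,k_2\}$, so that $\iota_{k_1,s}$ and $\iota_{k_2,s}$ witness the property.

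The only remaining point is $(\mathrm{NAP}_{\Cu})$, and here thinness does all the work. Given morphisms $\alpha_1\colon A\longrightarrow B_1$ and $\alpha_2\colon A\longrightarrow B_2$ with $B_i=E_{n^{s_i}}$, and any finite set $F\subseteq A$, I choose $C=E_{n^t}$ with $t\geq\max\{s_1,s_2\}$ and let $\beta_i\colon B_i\longrightarrow C$ be the unique morphisms. Both composites $\beta_1\circ\alpha_1$ and $\beta_2\circ\alpha_2$ are morphisms $A\longrightarrow C$ in $\mathfrak{e}_n$; since the category is thin they must coincide, whence $\beta_1\circ\alpha_1\simeq_F\beta_2\circ\alpha_2$ holds trivially. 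Therefore $\mathfrak{e}_n$ satisfies all three properties and is Fra\"{i}ss\'{e}. There is no genuine obstacle in this argument: everything rests on the single observation that parallel morphisms in $\mathfrak{e}_n$ are equal, with the verification of thinness via \autoref{prp:MorElCu} being the one place requiring (minimal) care.
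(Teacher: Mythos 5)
Your proof is correct and follows essentially the same route as the paper: the paper likewise reduces everything to the observation (recorded just before the statement, as a consequence of \autoref{prp:MorElCu}) that for $k\leq s$ there is a unique order-embedding $E_{n^k}\longrightarrow E_{n^s}$ given by a power of $n$, handles (JEP$_{\Cu}$) and (SEP$_{\Cu}$) as in \autoref{prp:InfElFraisse}, and gets amalgamation by pushing both maps into a common $E_{n^t}$ and invoking that uniqueness --- which is exactly your thinness argument, with the bonus (in both versions) that the amalgamation is exact rather than merely approximate.
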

\begin{proof}
That the category $\mathfrak{e}_n$ is separable and satisfies the joint embedding property is proven similarly as in the proof of \autoref{prp:InfElFraisse}.

To see that the category $\mathfrak{e}_n$ has amalgamation, let  $\alpha_1\colon E_{n^k}\longrightarrow E_{n^{s_1}}$ and $\alpha_2\colon E_{n^k}\longrightarrow E_{n^{s_2}}$ be $\mathfrak{e}_n$-morphisms. From the joint embedding property, we may assume that $s_1=s_2$.
Now, we know from the comments above that these two maps must be the same. Thus, the category $\mathfrak{e}_n$ has amalgamation.
\end{proof}

Recall that a $\Cu$-semigroup $S$ is \emph{stably finite} if $x\neq x+y$ for every nonzero $y\in S$ whenever $x\ll z$ for some $z\in S$.

Also, extending the definition of \cite{KR00}, let us say that $S$ is \emph{($n$-)weakly purely infinite} if $nx=2(nx)$ for every $x\in S$.

\begin{cor}\label{cor:enfraisse}
The Fra\"{i}ss\'{e} limit $\mathcal{E}_n$ of $\mathfrak{e}_n$ is a simple, non-stably finite, not weakly purely infinite $\Cu$-semigroup whose order is total.
\end{cor}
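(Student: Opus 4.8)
The plan is to identify the Fra\"{i}ss\'{e} sequence of $\mathfrak{e}_n$ explicitly, recognize $\mathcal{E}_n$ as its $\Cu$-inductive limit, and then read off each of the four properties. By \autoref{prp:MorElCu} and the comments preceding \autoref{prp:PElFraisse}, for each $k$ there is a unique $\mathfrak{e}_n$-morphism $E_{n^k}\to E_{n^{k+1}}$, namely $1\mapsto n\cdot 1$, and any composite of such maps is again the unique connecting morphism. Hence the sequence $(E_{n^k},\sigma_{k,k+1})_k$ with $\sigma_{k,k+1}$ given by $\times n$ satisfies \autoref{dfn:cufraisseseqdfn} on the nose (one may take $\beta_F$ so that $\beta_F\circ\alpha=\sigma_{k,j}$ exactly), so it is a Fra\"{i}ss\'{e} sequence and $\mathcal{E}_n=\varinjlim(E_{n^k},\times n)$ with canonical maps $\sigma_{k,\infty}\colon E_{n^k}\to\mathcal{E}_n$. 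I will use that each connecting map is an order-embedding and a $\Cu$-morphism, so that each $\sigma_{k,\infty}$ is again an (injective) order-embedding preserving $\ll$, that the images $\sigma_{k,\infty}(E_{n^k})$ are sup-dense in $\mathcal{E}_n$, and that $\sigma_{k,\infty}$ sends the top $\infty_k$ to a common element $\infty$ and the atom $1_k$ to a nonzero element $e_k$. (For intuition one may identify $j\in E_{n^k}$ with $j/n^k$, presenting $\mathcal{E}_n$ as $[0,1]\cup\{\infty\}$ with addition truncated to $\infty$ above $1$; the arguments below do not rely on this model.)

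For simplicity, let $I$ be an ideal containing a nonzero $x$. By sup-density there are $k$ and $0\neq u\in E_{n^k}$ with $0\neq\sigma_{k,\infty}(u)\leq x$; since $u\geq 1_k$ and $(n^k+1)1_k=\infty_k$, we obtain $(n^k+1)\sigma_{k,\infty}(u)=\infty$, so $\infty\leq (n^k+1)x\in I$ and therefore $I=\mathcal{E}_n$ by downward heredity. For totality of the order, write $x,y\in\mathcal{E}_n$ as suprema of images of basic elements; any two such images lift to a common $E_{n^c}$, which is totally ordered, hence are comparable, and a short argument (if $x\not\leq y$, then some basic piece of $x$ dominates every basic piece of $y$, forcing $y\leq x$) shows that the order on $\mathcal{E}_n$ is total.

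The crux of the statement is non-stable finiteness, and the key observation is that $\infty$ is \emph{compact}: in $E_{n^k}$ the order is discrete, so $\ll$ coincides with $\leq$ and $\infty_k\ll\infty_k$; as $\sigma_{k,\infty}$ preserves $\ll$ we get $\infty\ll\infty$. Being the top element, $\infty$ is absorbing, so $\infty+e_k=\infty$ with $e_k\neq 0$; thus taking $x=z=\infty$ witnesses $x\ll z$ together with $x=x+e_k$, so $\mathcal{E}_n$ is not stably finite. For the failure of weak pure infiniteness, fix $m\geq 1$ and choose $k$ with $2m\leq n^k$; then $m\cdot 1_k\neq 2m\cdot 1_k$ are distinct finite elements of $E_{n^k}$, and since $\sigma_{k,\infty}$ is injective we get $me_k\neq 2(me_k)$ in $\mathcal{E}_n$. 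As $m$ was arbitrary, $\mathcal{E}_n$ is not $m$-weakly purely infinite for any $m$, i.e.\ not weakly purely infinite.

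The main obstacle is the careful handling of the $\Cu$-inductive limit: one needs that the canonical maps $\sigma_{k,\infty}$ remain order-embeddings and that their images are sup-dense, since these facts underpin the ideal, comparability, and distinctness arguments above. Once this is secured, the decisive point is the (easy, but essential) observation that the top survives as a compact element of the limit because $\Cu$-morphisms preserve $\ll$; this is precisely what furnishes a compact, properly infinite element, and hence the failure of stable finiteness. An alternative that bypasses these subtleties is to first establish the isomorphism $\mathcal{E}_n\cong[0,1]\cup\{\infty\}$ and then verify all four properties directly in that concrete model.
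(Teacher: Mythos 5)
Your proof is correct and the two key observations are exactly those of the paper: the top element $\infty$ is compact in each finite $E_{n^k}$ and $\Cu$-morphisms preserve $\ll$, which gives $\infty\ll\infty$ and hence failure of stable finiteness; and the elements $m\cdot 1_k$ with $2m\le n^k$ witness that no uniform idempotency bound $mx=2(mx)$ can hold. The only real difference is in the supporting machinery: the paper obtains the needed order-embeddings $E_{n^k}\to\mathcal{E}_n$ by invoking the universality/homogeneity statement \autoref{prp:FraisseLimCu}, and quotes that simplicity and totality of the order pass to the limit, whereas you identify the Fra\"{i}ss\'{e} sequence concretely as $(E_{n^k},\times n)_k$ (legitimate, by the uniqueness of $\mathfrak{e}_n$-morphisms noted before \autoref{prp:PElFraisse}) and then argue directly on the $\Cu$-inductive limit, using that canonical maps of a limit of order-embeddings are order-embeddings with sup-dense images. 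Your route is more self-contained and yields the explicit model $\mathcal{E}_n\cong\big(\N[\tfrac1n]\cap[0,1]\big)\cup\{\infty\}$ essentially for free (cf. the remark following the corollary), at the cost of having to verify by hand the standard limit facts (order-embedding and sup-density of the $\sigma_{k,\infty}$, which follow from the description of sequential limits in $\Cu$ cited in the paper); the paper's route is shorter but leans on the general Fra\"{i}ss\'{e}-limit theorem. Both are sound.
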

\begin{proof}
The Fra\"{i}ss\'{e} sequence of $\mathfrak{e}_n$ consists of simple $\Cu$-semigroups, so its limit must be simple. Further, since each $\Cu$-semigroup has a total order, so does the limit.

A simple $\Cu$-semigroup is not stably finite if and only if its greatest element, denoted by $\infty$, satisfies $\infty\ll\infty$. The Fra\"{i}ss\'{e} limit of $\mathfrak{e}_n$ admits a nonzero $\Cu$-morphism $\alpha$ from $\{ 0,1,\ldots ,n,\infty\}$. In particular, $\alpha (\infty )=\infty$ and, since $\alpha$ preserves the $\ll$-relation, one has $\infty\ll\infty$ in the limit. This shows that the limit is not stably finite.

Finally, the limit cannot be weakly purely infinite. Indeed, for any $k\in\N$ we can use \autoref{prp:FraisseLimCu} to find an order-embedding $\alpha$ from $\{ 0,1,\ldots ,n^k,\infty \}$ to the limit. Then, $\alpha (1)$ satisfies $l\alpha(1)\neq (l-1)\alpha (1)$ for any $l\leq n^k$. This shows that there is no global bound on the idempotency of the elements, which implies that the limit is not weakly purely infinite.
\end{proof}

\begin{rmk}
Arguing similarly as in \autoref{prp:SpCuFrais}, we can deduce that, given a prime number $p$, the Fra\"{i}ss\'{e} limit $\mathcal{E}_p$ of $\mathfrak{e}_p$ is a truncated version of the dimension semigroup of infinite type $S_p=\N[\frac{1}{p}]\,\sqcup\,(0,\infty]$. Explicitly, we have
\[
	\mathcal{E}_p\cong\left\{ x\in \N\left[\frac{1}{p}\right]\,\sqcup\,(0,1] \mid x\leq 1 \right\}\cup\left\{ \infty \right\}
\]
where the order and sum between two elements $x,y$ are defined as in $S_p$, with the exception that $x+y=\infty$ whenever this sum is (strictly) greater than $1_c$ in $S_p$.
\end{rmk}

\begin{qst}\label{qst:elemfraisse}
It can be checked that $\mathcal{E}_p$ satisfies all the known extra axioms (O5)-(O8) that the Cuntz semigroup of any $\CatCa$-algebra satisfies. Thus, it is natural to ask: Is $\mathcal{E}_p$ the Cuntz semigroup of some $\CatCa$-algebra $A$?

Note that such a $\CatCa$-algebra $A$ would be simple, not stably finite, and not purely infinite. However, $A$ is not the $\CatCa$-algebra $B$ constructed by R{\o}rdam in \cite{R03}, since $\Cu (B)$ does not satisfy the Corona Factorization Property (see \cite[Theorem~5.8]{BP18}) but $\mathcal{E}_p$ does.
\end{qst}

\subsection{The Cantor set and the Pseudo-arc}\label{subsec:D}
Fra\"{i}ss\'{e} Theory allows one to rewrite well-known topological spaces such as the Cantor set $2^{\N}$ and the pseudo-arc $\mathbb{P}$ as Fra\"{i}ss\'{e} limits and, in particular, to generically (re)prove some interesting facts about them, such as universality and homogeneity. We refer the reader to \cite{BK22, IS06, K13} for more details. Following these results, we show that the $\Cu$-semigroups $\Lsc(2^{\N},\overline{\N})$ and $\Lsc(\mathbb{P},\overline{\N})$ are Fra\"{i}ss\'{e} limits of well-chosen categories of Cuntz semigroups. 

\begin{prg}[\textbf{Lower-semicontinuous functions}]\label{prg:LscLike}
We begin by recalling some facts about monoids of lower-semicontinuous functions, which constitute a great source of example of abstract Cuntz semigroups. For instance, it is known that the monoid of lower-semicontinuous functions from a compact, metric (or, more generally, hereditarily Lindel\"{o}f,  locally compact, and Hausdorff) space $X$ to $\overline{\N}$, denoted by $\Lsc (X, \overline{\N} )$, is a $\Cu$-semigroup when equipped with pointwise addition and order. (See \cite[Proposition~1.16]{EI22}, and also \cite[Corollary~4.22]{V22} and \cite[Theorem~5.17]{APS11}.) In the specific case where $X$ is a compact one-dimensional $\CW$-complex, then $\Lsc(X,\overline{\N})$ is in fact the concrete Cuntz semigroup of the $\CatCa$-algebra $C(X)$. (See \cite{R13,C22} and also \cite{APS11},\cite{C23a} for other examples of concrete Cuntz semigroup of $\CatCa$-algebras that can be expressed as lower-semicontinuous functions.)
 
 Further, as noted in \cite{RS10} for the interval, and in \cite{C22} for compact one-dimensional $\CW$-complexes (which include finite discrete sets, the interval and the circle), the set of $\Cu$-morphisms $\Hom_{\Cu}(\Lsc (X,\overline{\N}),T)$ can be equipped with the following \emph{$\Cu$-metric}
\[
   d_{\Cu}(\alpha,\beta):=
   \inf \left\{
r>0\mid \forall V\in\mathcal{O}(X), \alpha(1_{V})\leq\beta(1_{V_{r}})  \text{ and }  \beta(1_{V})\leq\alpha(1_{V_{r}})
   \right\} 
 \]
 where $\mathcal{O}(X):=\{$Open sets of $X\}$ and $V_r$ is an $r$-open neighborhood of $V$. 
 \end{prg}
 
 As proved implicitly in {\cite[Lemma~4.8]{V22b}} for the interval case and explicitly in \cite{C22} for compact one-dimensional $\CW$-complexes, there is a strong link between the metric $d_{\Cu}$ and finite-set comparison:
 
 \begin{prop}[{\cite[Proposition 5.6]{C22}}]
  Let $X$ be a compact one-dimensional $\CW$-complex and let ${\{\overline{U_k}\}}_1^n$ be a finite closed cover of $X$ induced by an equidistant partition of size $1/n$. 
  
Let $F_n:=\left\{f\in \Lsc(X,\overline{\N}) \mid f_{\mid U_k} \text{ is constant for any } k\in\{1,\ldots, n\}\right\}$. For any pair of morphisms $\alpha ,\beta \in \Hom_{\Cu}(\Lsc (X, \overline{\N} ),T)$, we have
  \begin{itemize}
   \item[(i)] $\alpha\simeq_{F_{n}}\beta$ implies $d_{\Cu}(\alpha ,\beta )\leq 2/n$.
   \item[(ii)] $d_{\Cu}(\alpha ,\beta )\leq 1/n$ implies $\alpha\simeq_{F_{n}}\beta$.
  \end{itemize}
 \end{prop}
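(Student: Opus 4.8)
The plan is to reduce both implications to statements about indicator functions $1_V$ of open sets, and then to exploit the elementary geometry of the mesh-$1/n$ cover. Throughout I would use two standard facts about $\Lsc(X,\overline{\N})$ (see \cite{APS11,C22}). First, every element is the supremum of an increasing sequence of finite sums $\sum_k 1_{V_k}$ with each $V_k$ open and relatively compact; since $\alpha,\beta$ preserve addition and suprema of increasing sequences, all the inequalities to be checked can be tested on indicators $1_V$. Second, for $f\in F_n$ the open superlevel sets $V_k:=\{f\ge k\}$ are precisely interiors of subcomplexes, i.e.\ $V_k=\Int(K_k)$ for a union of closed cells $K_k$ with $\overline{V_k}=K_k$, and the relation $f'\ll f$ is governed levelwise by the inclusions $\overline{\{f'\ge k\}}\subseteq\{f\ge k\}$ for all $k$. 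I also fix the convention that $V_r$ is the open $r$-neighbourhood, and I record the two geometric facts I will need, \emph{to be verified from the cell structure}: (a) any closed cell meeting a set $S$ lies in $S_{1/n}$; and (b) if $\overline{V'}\subseteq V$ with $V',V$ interiors of subcomplexes, then $V$ contains every cell adjacent to $V'$, so that $(V')_{1/n}\subseteq V$.

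For (ii), assume $d_{\Cu}(\alpha,\beta)\le 1/n$ and take $f'\ll f$ in $F_n$; I prove $\alpha(f')\le\beta(f)$, the other inequality being symmetric. Writing $V_k':=\{f'\ge k\}$ and $V_k:=\{f\ge k\}$, both are interiors of subcomplexes and $f'\ll f$ gives $\overline{V_k'}\subseteq V_k$. The metric hypothesis applied to the open set $V_k'$ yields $\alpha(1_{V_k'})\le\beta(1_{(V_k')_{1/n}})$, and fact (b) gives $(V_k')_{1/n}\subseteq V_k$, so monotonicity of $\beta$ produces $\alpha(1_{V_k'})\le\beta(1_{V_k})$. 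Summing over $k$ (and passing to suprema of the truncations $\min(f',N)$ to accommodate the value $\infty$) gives $\alpha(f')\le\beta(f)$, which is exactly $\alpha\simeq_{F_n}\beta$. Note that here the compact containment $\overline{V_k'}\subseteq V_k$ already supplies the collar, which is why a single $1/n$ suffices.

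For (i), assume $\alpha\simeq_{F_n}\beta$ and fix an open $V$; I want $\alpha(1_V)\le\beta(1_{V_{2/n}})$. Since $\alpha(1_V)=\sup_{V'\Subset V}\alpha(1_{V'})$, it suffices to treat a fixed $V'\Subset V$. The idea is to sandwich $V'$ between two subcomplex-interiors $A\ll B$ that are comparable in $F_n$: let $A$ be the interior of the union of all closed cells meeting $\overline{V'}$, and $B$ the interior of the union of all closed cells meeting $\overline{A}$. Then $1_{V'}\le 1_A$, and $1_A\ll 1_B$ because $B$ contains every cell adjacent to $A$, whence $\overline{A}\subseteq B$; both $1_A,1_B$ lie in $F_n$, so $\alpha\simeq_{F_n}\beta$ gives $\alpha(1_A)\le\beta(1_B)$. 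Finally fact (a), applied twice, yields $A\subseteq(V')_{1/n}$ and then $B\subseteq A_{1/n}\subseteq(V')_{2/n}\subseteq V_{2/n}$, so $\beta(1_B)\le\beta(1_{V_{2/n}})$. Chaining, $\alpha(1_{V'})\le\alpha(1_A)\le\beta(1_B)\le\beta(1_{V_{2/n}})$, and taking the supremum over $V'$ finishes (i). Here the two successive enlargements (first to reach a subcomplex, then to obtain strict containment) are exactly what force the factor $2/n$.

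The main obstacle I anticipate is the bookkeeping at the boundary cells of the cover. Since the closed cells $\overline{U_k}$ overlap along the lower-dimensional skeleton while the open cells $U_k$ are disjoint, one must be precise about the values functions in $F_n$ take on shared boundary points, about the identification of superlevel sets with interiors of subcomplexes, and about whether $1_{V'}\le 1_A$ and the levelwise characterisation of $\ll$ really hold. These are the points where the one-dimensional $\CW$-structure and the equidistant partition are genuinely used, and where facts (a) and (b)—especially the tightness of $(V')_{1/n}\subseteq V$ at outer boundary points—must be checked against the precise neighbourhood convention rather than merely asserted. Once they are in place, both implications are short.
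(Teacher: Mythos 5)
First, a remark on the ground truth: the paper does not prove this proposition — it is quoted verbatim from \cite[Proposition~5.6]{C22} and used as a black box — so there is no internal proof to compare yours against. Judged on its own merits, your strategy (reduce everything to indicators of open sets, then exploit the mesh-$1/n$ cell structure by sandwiching between interiors of subcomplexes) is the natural one, and the skeleton of part (i) is sound. There is, however, a genuine gap in part (ii), located exactly at one of the points you flagged as ``to be verified''.

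The gap in (ii): $d_{\Cu}(\alpha,\beta)\le 1/n$ is an \emph{infimum}, so it only yields $\alpha(1_{V_k'})\le\beta(1_{(V_k')_r})$ for every $r>1/n$, not for $r=1/n$ itself. Your fact (b) is tight precisely there: $(V_k')_{1/n}\subseteq V_k$ does hold, but $(V_k')_r\not\subseteq V_k$ for any $r>1/n$ in general, because the open $r$-neighbourhood of a cell of $V_k'$ then reaches the far endpoints $w$ of the adjacent cells, and nothing forces $f(w)\ge k$: lower semicontinuity only bounds the value at a vertex \emph{above} by the values on the adjacent cells, so $V_k=\{f\ge k\}$ may omit such vertices (this is also why these sets are not literally interiors of subcomplexes, though the properties you actually use — openness and saturation with respect to the open cells — do hold). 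Consequently the chain $\alpha(1_{V_k'})\le\beta(1_{(V_k')_{1/n}})\le\beta(1_{V_k})$ does not follow as written. The repair is routine but must be carried out: write $1_{V_k'}=\sup_W 1_W$ over open $W$ with $\overline W\subseteq V_k'$, observe that for $0<\delta<d(\overline W,X\setminus V_k')$ the path-metric inclusion $W_{1/n+\delta}\subseteq (W_\delta)_{1/n}\subseteq (V_k')_{1/n}\subseteq V_k$ holds, and apply the hypothesis with $r=1/n+\delta>1/n$ before taking the supremum over $W$.

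In (i) the analogous imprecision is harmless. Facts (a) and (b) with \emph{open} neighbourhoods do fail at branch and leaf vertices: a closed cell meeting $\overline{V'}$ only at one endpoint has its opposite endpoint at distance exactly $1/n$ from $V'$, and iterating this one finds points of $B$ (for instance a leaf vertex two cells away from $\overline{V'}$) at distance exactly $2/n$, so $A\subseteq (V')_{1/n}$ and $B\subseteq (V')_{2/n}$ can fail. What is true, and is all you need since $d_{\Cu}$ is again an infimum, is $B\subseteq\{x: d(x,\overline{V'})\le 2/n\}\subseteq (V')_r$ for every $r>2/n$; this still gives $d_{\Cu}(\alpha,\beta)\le 2/n$. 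The remaining steps of (i) — $V'\subseteq A$, the membership $1_A,1_B\in F_n$, the relation $1_A\ll 1_B$ via $\overline A\subseteq B$, and the truncation argument handling the value $\infty$ — are correct.
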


As shown in \cite[Lemma 5.16]{APS11}, any continuous map $f\colon Y\longrightarrow X$ between second countable, compact, Hausdorff spaces induces a $\Cu$-morphism $\Lsc(f,\overline{\N})\colon \Lsc(X,\overline{\N})\longrightarrow \Lsc(Y,\overline{\N})$ given by  $l\longmapsto l\circ f $. In what follows, we prove that a weak converse of this result ---akin to what happens for commutative $\CatCa$-algebras--- also exists. These results might be well known to experts (for example, if $X$ and $Y$ are one-dimensional, they follow from \cite{CE08} and standard facts about abelian $\CatCa$-algebras). However, since we have not found them in the literature with our generality, we provide a proof here for the convenience of the reader.

 \begin{prop}\label{prp:DualityLsc}
  Let $X,Y$ be compact, metric spaces and let $\alpha\colon \Lsc (X,\overline{\N})\longrightarrow \Lsc (Y,\overline{\N})$ be a  $\Cu$-morphism such that $\alpha (1)=1$. 
  
Then there exists a continuous map $f_\alpha\colon Y\longrightarrow X$ inducing $\alpha$, in the sense that $\alpha=\Lsc(f_\alpha,\overline{\N})$.
 \end{prop}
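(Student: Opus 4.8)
The plan is to reconstruct $f_\alpha$ from the action of $\alpha$ on indicator functions of open sets, mimicking the duality between a compact Hausdorff space and its frame of open sets. First I would observe that for any open $V\subseteq X$ one has $0=\alpha(0)\leq \alpha(1_V)\leq \alpha(1_X)=1$, so $\alpha(1_V)$ is a $\{0,1\}$-valued lower-semicontinuous function on $Y$; hence $\alpha(1_V)=1_{W_V}$ for the open set $W_V:=\{y\in Y\mid \alpha(1_V)(y)>0\}$. This defines a map $\lambda\colon\mathcal{O}(X)\longrightarrow\mathcal{O}(Y)$, $V\mapsto W_V$, and the whole proof reduces to showing that $\lambda$ is a frame homomorphism and then dualizing it to a continuous map $f_\alpha\colon Y\longrightarrow X$.

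The key step is to prove that $\lambda$ preserves finite unions and finite intersections simultaneously. Monotonicity of $\lambda$ is clear, as is $\lambda(\emptyset)=\emptyset$ and $\lambda(X)=Y$ (the latter is exactly the hypothesis $\alpha(1)=1$), and preservation of suprema of increasing sequences by $\alpha$ gives $\lambda(\bigcup_n V_n)=\bigcup_n\lambda(V_n)$ for increasing sequences. For finite unions and intersections I would exploit the identity $1_V+1_{V'}=1_{V\cup V'}+1_{V\cap V'}$: applying the additive map $\alpha$ yields $1_{W_V}+1_{W_{V'}}=1_{W_{V\cup V'}}+1_{W_{V\cap V'}}$, and evaluating this at a point $y\in Y$ together with the already-known inclusions $W_{V\cap V'}\subseteq W_V\cap W_{V'}$ and $W_V\cup W_{V'}\subseteq W_{V\cup V'}$ forces, by a short case analysis on the values in $\{0,1,2\}$, the equalities $W_{V\cup V'}=W_V\cup W_{V'}$ and $W_{V\cap V'}=W_V\cap W_{V'}$. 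Combining finite-union preservation with the increasing-sequence case and the Lindel\"of property of the second countable space $X$ then upgrades $\lambda$ to a homomorphism preserving arbitrary unions.

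With $\lambda$ in hand I would construct the point-map directly. For $y\in Y$ set $A_y:=X\setminus\bigcup\{V\in\mathcal{O}(X)\mid y\notin W_V\}$. Using that $\lambda$ preserves arbitrary unions one sees $A_y\neq\emptyset$ (otherwise $y\in W_X=Y$ would lie in a union of sets none of which contains it), and using meet-preservation together with the Hausdorff property one sees $A_y$ cannot contain two distinct points; hence $A_y=\{f_\alpha(y)\}$ for a unique point, which defines $f_\alpha\colon Y\longrightarrow X$. The heart of the argument is verifying $f_\alpha^{-1}(V)=W_V$ for every open $V$: the inclusion $f_\alpha(y)\in V\Rightarrow y\in W_V$ is immediate from the definition of $A_y$, while the reverse inclusion uses that $f_\alpha(y)\notin V$ forces $V\subseteq\bigcup\{U\mid y\notin W_U\}$, whence a countable-subcover argument combined with join-preservation of $\lambda$ gives $y\notin W_V$. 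This identity shows both that $f_\alpha$ is continuous and that $\Lsc(f_\alpha,\overline{\N})(1_V)=1_{f_\alpha^{-1}(V)}=1_{W_V}=\alpha(1_V)$.

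Finally I would extend the agreement from indicators to all of $\Lsc(X,\overline{\N})$: every $f\in\Lsc(X,\overline{\N})$ equals the supremum of the increasing sequence $\sum_{k=1}^{n}1_{\{f\geq k\}}$ of finite sums of indicators of open sets, so since both $\alpha$ and $\Lsc(f_\alpha,\overline{\N})$ are monoid morphisms preserving suprema of increasing sequences and they coincide on indicators, they coincide on all of $\Lsc(X,\overline{\N})$. I expect the main obstacle to be the reverse inclusion $W_V\subseteq f_\alpha^{-1}(V)$ in the construction of the point-map, where the second countability of $X$ is genuinely used to replace an arbitrary union by a countable one before applying join-preservation of $\lambda$; the simultaneous meet/join preservation via the additivity identity is the other delicate point, but it is handled cleanly by the pointwise case analysis.
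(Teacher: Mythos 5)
Your proof is correct, but it takes a recognizably different route from the paper's. The paper reduces immediately to the scalar case: it first proves that every normalized $\Cu$-morphism $\lambda\colon\Lsc(X,\overline{\N})\to\overline{\N}$ is a point evaluation (existence of the point comes from compactness of $X$ via a finite-intersection-property argument on the family $J=\{U:\lambda(\mymathbb{1}_U)=1\}$, using the identity $\sum_i\lambda(\mymathbb{1}_{U_i})=\lambda(\mymathbb{1}_{\cup U_i})+\dots+\lambda(\mymathbb{1}_{\cap U_i})$; uniqueness comes from Hausdorffness and the inequality $1\leq\mymathbb{1}_{X\setminus B}+\mymathbb{1}_{X\setminus C}$ for disjoint closed balls), and then defines $f_\alpha(y)$ by applying this to $\mathrm{ev}_y\circ\alpha$. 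You instead treat $V\mapsto W_V$ as a map of frames, prove simultaneous preservation of finite unions and intersections from the additivity identity $\mymathbb{1}_V+\mymathbb{1}_{V'}=\mymathbb{1}_{V\cup V'}+\mymathbb{1}_{V\cap V'}$, upgrade to arbitrary joins via hereditary Lindel\"ofness, and extract the point of $A_y$ as in locale theory. The two arguments share the same skeleton (in both cases $f_\alpha(y)$ is recovered from the filter of open sets $U$ with $y\in W_U$, and the final verification is $f_\alpha^{-1}(V)=W_V$), but the mechanisms differ: the paper leans on compactness of $X$ for existence of the point, whereas you get nonemptiness of $A_y$ from join-preservation plus second countability, and uniqueness from meet-preservation rather than from an explicit covering inequality. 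Your version is somewhat longer but makes the lattice-theoretic content explicit and spells out the final extension from indicators to all of $\Lsc(X,\overline{\N})$ (via $f=\sup_n\sum_{k=1}^n\mymathbb{1}_{\{f\geq k\}}$), a step the paper leaves implicit; the paper's version is shorter because the scalar reduction lets it quote compactness once and be done.
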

 \begin{proof}
  We begin our argument by proving the following claim.
  
  \textbf{Claim.} Let $\lambda\colon \Lsc (X,\overline{\N})\longrightarrow \overline{\N}$ be a $\Cu$-morphism such that $\lambda (1)=1$. Then, there exists $x\in X$ such that $\lambda = {\rm ev}_x$, the evaluation at $x$.
  
  \emph{Proof of the Claim}. Let $J$ be the family of open subsets $U$ in $X$ such that $\lambda (\mymathbb{1}_U)=1$. Given finitely many open sets $U_1,\ldots ,U_n\in J$, we have 
  \[
   n=\lambda (\mymathbb{1}_{U_1})+\ldots + \lambda (\mymathbb{1}_{U_n}) = 
 \lambda (\mymathbb{1}_{\cup_j U_j})+\ldots + \lambda (\mymathbb{1}_{\cap_j U_j}).
  \]

This implies that $\lambda (\mymathbb{1}_{\cap_j U_j})=1$ and, in particular, that $\cap_j U_j\neq\emptyset$. Thus, $J$ has the finite intersection property and, since $X$ is compact, we get that $\cap_{U\in J}\overline{U}$ is not empty.

To see that  $\cap_{U\in J}\overline{U}=\{ x\}$, assume for the sake of contradiction that there exist $x,y\in \cap_{U\in J}\overline{U}$ with $x\neq y$. Let $B,C$ be closed disjoint balls of nonzero radius centered at $x$ and $y$ respectively. One has $1\leq \mymathbb{1}_{X-B}+\mymathbb{1}_{X-C}$ and, consequently, that $\lambda(\mymathbb{1}_{X-B})$  or $\lambda(\mymathbb{1}_{X-C})$ is $1$. Thus, we may assume without loss of generality that $\lambda(\mymathbb{1}_{X-C})=1$ or, in other words, that $X-C\in J$. This is a contradiction, since $y\notin \overline{X-C}$. It follows that $\cap_{U\in J}\overline{U}$ contains a single point $x$.

Finally, to see that $\lambda = {\rm ev}_x$, take any open subset $U$ and let $U'$ be a compactly contained open subset in $U$ such that $\lambda (\mymathbb{1}_{U})=\lambda (\mymathbb{1}_{U'})$. If $\lambda (\mymathbb{1}_{U})=1$, then $x\in \overline{U'}$ and so $x\in U$. Conversely, if $x\in U$, take $U'$ compactly contained in $U$ such that $x\in U'$. Then, $\mymathbb{1}_{U}+\mymathbb{1}_{X-\overline{U'}}\geq 1$, which implies that either $\lambda (\mymathbb{1}_{U})=1$ or $\lambda (\mymathbb{1}_{X-\overline{U'}})=1$. The second equality cannot hold, since otherwise we would get $x\in X-\overline{U'}$, a contradiction. This proves the claim.
  
  Now, for any $y\in Y$, the composition ${\rm ev}_y\circ \alpha$ is a $\Cu$-morphism such that $\lambda (1)=1$. Using the claim, there exists $x\in X$ with ${\rm ev}_y\circ \alpha = {\rm ev}_x$. Let $f_\alpha\colon Y\longrightarrow X$ be the map defined by $f_{\alpha}(y):=x$. To see that $f_{\alpha}$ is continuous, take an open subset $U$ of $X$ and let $V\subseteq Y$ be the open subset such that $\alpha (\mymathbb{1}_U)=\mymathbb{1}_V$. Then, we see that
  \[
   f_{\alpha}^{-1}(U)=\{ y\in Y \mid {\rm ev}_{f_{\alpha}(y)}(\mymathbb{1}_U)=1 \}
   = \{ y\in Y \mid {\rm ev}_y\circ \alpha(\mymathbb{1}_U)=1 \}
   = V
  \]
which ends the proof.
 \end{proof}

  \begin{cor}\label{prp:SurjDualityLsc}
  Let $X,Y$ be compact, metric spaces. Let $\alpha\colon \Lsc (X,\overline{\N})\longrightarrow \Lsc (Y,\overline{\N})$ be a $\Cu$-morphism such that $\alpha (1)=1$, and let $f_\alpha\colon Y\longrightarrow X$ be the continuous map obtained in the proposition above. 
  
  Then, $\alpha$ is an order-embedding if and only if $f_\alpha$ is surjective.
 \end{cor}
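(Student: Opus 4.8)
The plan is to use the explicit description obtained in the previous proposition, namely that $\alpha=\Lsc(f_\alpha,\overline{\N})$ acts by precomposition, $\alpha(\ell)=\ell\circ f_\alpha$ for every $\ell\in\Lsc(X,\overline{\N})$. Under this identification, the order-embedding property of $\alpha$ (i.e. $\alpha(\ell_1)\leq\alpha(\ell_2)$ implies $\ell_1\leq\ell_2$) translates into a pointwise comparison of functions on $Y$, and the whole statement reduces to elementary point-set topology.

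For the forward implication, I would assume $f_\alpha$ is surjective and take $\ell_1,\ell_2\in\Lsc(X,\overline{\N})$ with $\alpha(\ell_1)\leq\alpha(\ell_2)$, that is, $\ell_1\circ f_\alpha\leq\ell_2\circ f_\alpha$ pointwise on $Y$. Given any $x\in X$, surjectivity yields some $y\in Y$ with $f_\alpha(y)=x$, whence $\ell_1(x)=\ell_1(f_\alpha(y))\leq\ell_2(f_\alpha(y))=\ell_2(x)$. As $x$ was arbitrary, $\ell_1\leq\ell_2$; since $\alpha$ is already a $\Cu$-morphism, this shows it is an order-embedding.

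For the converse I would argue by contraposition. If $f_\alpha$ is not surjective, fix $x_0\in X\setminus f_\alpha(Y)$. Because $Y$ is compact and $f_\alpha$ is continuous, the image $f_\alpha(Y)$ is compact, hence closed in the metric space $X$; therefore there is an open neighbourhood $U$ of $x_0$ with $U\cap f_\alpha(Y)=\emptyset$. The indicator $\mymathbb{1}_U$ is a nonzero element of $\Lsc(X,\overline{\N})$ (it is lower-semicontinuous since $U$ is open, and $\mymathbb{1}_U\not\leq 0$ as $U\neq\emptyset$), yet $f_\alpha^{-1}(U)=\emptyset$ forces $\alpha(\mymathbb{1}_U)=\mymathbb{1}_{f_\alpha^{-1}(U)}=0=\alpha(0)$. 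Thus $\alpha(\mymathbb{1}_U)\leq\alpha(0)$ while $\mymathbb{1}_U\not\leq 0$, so $\alpha$ is not an order-embedding.

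The main (and only mild) obstacle is the converse direction, specifically justifying that the complement of the image contains an open set: this is exactly where compactness of $Y$ together with the continuity of $f_\alpha$ supplied by the preceding proposition are needed, so that $f_\alpha(Y)$ is closed and a genuine open separating neighbourhood of the missed point exists. Once that topological point is in place, the indicator-function witness makes the failure of the order-embedding property immediate, and both implications are routine.
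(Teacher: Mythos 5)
Your proof is correct and, for the direction ``$\alpha$ order-embedding $\Rightarrow$ $f_\alpha$ surjective,'' it is essentially identical to the paper's: both use compactness of $f_\alpha(Y)$ to separate a missed point by an open set $U$ and then observe that $\alpha(\mymathbb{1}_U)=\mymathbb{1}_{f_\alpha^{-1}(U)}=0$ while $\mymathbb{1}_U\neq 0$. In the converse direction the routes differ slightly: the paper first reduces the order-embedding property to indicator functions (invoking that the order on $\Lsc(X,\overline{\N})$ is determined by the $\mymathbb{1}_U$'s) and then checks $f_\alpha^{-1}(U)\subseteq f_\alpha^{-1}(V)$ implies $U\subseteq V$, whereas you work directly with arbitrary $\ell_1,\ell_2$ and the pointwise order, which is a touch more elementary and avoids the citation; both arguments are valid since the order on $\Lsc(X,\overline{\N})$ is literally the pointwise one and $\alpha=\Lsc(f_\alpha,\overline{\N})$ acts by precomposition.
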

 \begin{proof}
Assume for the sake of contradiction that $\alpha$ is an order-embedding and that there exists $x\in X\setminus f_{\alpha}(Y)$. Since $f_{\alpha}(Y)$ is compact, we can find an open neighbourhood $U$ of $x$ disjoint with $f_{\alpha}(Y)$. In particular, $f_{\alpha}^{-1}(U)=\emptyset$ and, therefore, $\alpha (\mymathbb{1}_U)=\mymathbb{1}_{f_{\alpha}^{-1}(U)}=0$, a contradiction.
  
  Conversely, assume now that $f_{\alpha}\colon Y\longrightarrow X$ is surjective. Since the order on $\Lsc (X,\overline{\N})$ is determined by the indicators $\mymathbb{1}_U$ (see e.g. \cite[Proposition 4.3]{C22} or \cite{V22}), we only need to prove that $\mymathbb{1}_U\leq \mymathbb{1}_V$ whenever $\alpha (\mymathbb{1}_U)\leq \alpha (\mymathbb{1}_V)$.
Let $U,V\subseteq X$ be such that $\alpha (\mymathbb{1}_U)\leq \alpha (\mymathbb{1}_V)$. We have $\mymathbb{1}_{f_{\alpha}^{-1}(U)}\leq \mymathbb{1}_{f_{\alpha}^{-1}(V)}$. Consequently, $f_{\alpha}^{-1}(U)\subseteq f_{\alpha}^{-1}(V)$. By the surjectivity of $f_{\alpha}$ we deduce that  $U\subseteq V$, as desired.
 \end{proof}

\begin{prg}[\textbf{The Cantor Set}]
Let us recall a characterization of the Cantor set. We use the language and formulations detailed in \cite{BK22}, even though this characterization had been obtained beforehand, e.g. in \cite{S04,K13}. As mentioned in the discussion of \cite[Example 4.55]{BK22}, the Cantor set $2^{\N}$ is the Fra\"{i}ss\'{e} limit of the category of finite discrete sets and continuous surjections in the category of zero-dimensional compacta and continuous surjections. Now, using the characterization of Fra\"{i}ss\'{e} limits given in \cite[Theorem 4.15]{BK22}, one obtains the following result.
\end{prg}

\begin{thm}[Characterization of the Cantor set]
A zero-dimensional compactum $C$ is the Cantor set if and only if $C$ satisfies the following property: 

For any two finite discrete sets $F,F'$ and any two continuous surjections $f\colon C\longrightarrow F, g\colon F'\longrightarrow F$, there exists a continuous surjection $h\colon C\longrightarrow F'$ such that $h\circ g=f$.
\end{thm}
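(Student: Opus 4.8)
The plan is to read this statement as a direct instance of the characterization of projective Fra\"{i}ss\'{e} limits recorded in \cite[Theorem~4.15]{BK22}, combined with the fact recalled just above that the Cantor set $2^{\N}$ is the projective Fra\"{i}ss\'{e} limit of the category of finite discrete sets and continuous surjections, viewed inside the category of zero-dimensional compacta and continuous surjections. The single observation that drives the whole argument is that the displayed lifting condition is \emph{precisely} the projective extension (homogeneity) property appearing in that characterization, written out in the concrete situation where the ``small'' objects are finite discrete sets $F,F'$ and the morphisms are continuous surjections $f,g$. Throughout I would phrase the conclusion as ``$f$ factors through $g$ via $h$'', so as not to commit to a particular order of composition.

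For the forward implication I would argue directly. If $C$ is the Cantor set, then $C$ is the projective Fra\"{i}ss\'{e} limit of finite discrete sets, and \cite[Theorem~4.15]{BK22} guarantees that any such limit enjoys the projective extension property. Unwinding what this says for surjections $f\colon C\to F$ and $g\colon F'\to F$ yields exactly the stated factorization.

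For the converse I would check that a zero-dimensional compactum $C$ satisfying the displayed property meets both hypotheses of \cite[Theorem~4.15]{BK22}. First, every zero-dimensional compactum is automatically an inverse limit of finite discrete sets, so $C$ already lies in the relevant completion and the requirement that $C$ be a limit of objects of the small category is free. Second, the displayed property is the projective extension property; applying it with $F$ a one-point set (so that $f\colon C\to F$ is the unique constant map and $g\colon F'\to F$ is the surjection onto the point) produces a continuous surjection $C\to F'$ for every finite discrete $F'$, which is exactly projective universality. Hence $C$ satisfies the characterization and is therefore isomorphic to the projective Fra\"{i}ss\'{e} limit; by uniqueness of Fra\"{i}ss\'{e} limits this forces $C\cong 2^{\N}$.

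The only genuinely delicate points are bookkeeping ones. One must confirm that the quantifier structure of the displayed property matches the extension property of \cite[Theorem~4.15]{BK22} verbatim, that continuous surjections onto and out of finite discrete sets behave as the epimorphisms required by that framework, and that the degenerate case $C=\emptyset$ is excluded (which it is, since a nonempty finite discrete $F'$ cannot be the continuous surjective image of the empty set). Once this alignment with the abstract setting is in place, the theorem follows as an immediate translation of \cite[Theorem~4.15]{BK22}.
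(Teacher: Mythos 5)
Your proposal matches the paper's own treatment: the paper derives this statement by citing that $2^{\N}$ is the projective Fra\"{i}ss\'{e} limit of finite discrete sets with continuous surjections (\cite[Example 4.55]{BK22}) and then invoking the characterization of Fra\"{i}ss\'{e} limits in \cite[Theorem 4.15]{BK22}, which is exactly your argument. Your extra care about the composition order (the displayed $h\circ g=f$ should read $g\circ h=f$) and about verifying the hypotheses in the converse direction is sensible bookkeeping but does not change the route.
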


\begin{prg} \emph{The category $\mathcal{K}_{2^{\N}}$} is the category whose objects are simplicial $\Cu$-semigroups and whose morphisms are order-embeddings such that $1\mapsto 1$.
\end{prg}

\begin{thm}\label{prp:CantoerFraisse}
 The category $\mathcal{K}_{2^{\N}}$ is Fra\"{i}ss\'{e}.
\end{thm}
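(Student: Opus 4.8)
The plan is to exploit the contravariant duality between the morphisms of $\mathcal{K}_{2^{\N}}$ and continuous surjections of finite discrete sets furnished by \autoref{prp:DualityLsc} and \autoref{prp:SurjDualityLsc}. First I would recall that every simplicial $\Cu$-semigroup $\overline{\N}^r$ is isomorphic to $\Lsc(F,\overline{\N})$ for a nonempty finite discrete set $F$ with $\card(F)=r$, the order unit corresponding to the constant function $\mymathbb{1}$. Since finite discrete sets are compact and metric, \autoref{prp:DualityLsc} shows that every $\mathcal{K}_{2^{\N}}$-morphism $\alpha\colon\Lsc(F,\overline{\N})\to\Lsc(F',\overline{\N})$ (a unital $\Cu$-morphism) has the form $\Lsc(f_\alpha,\overline{\N})$ for a unique continuous $f_\alpha\colon F'\to F$, while \autoref{prp:SurjDualityLsc} shows that $\alpha$ is an order-embedding precisely when $f_\alpha$ is surjective. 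As $\Lsc(-,\overline{\N})$ is a contravariant functor (see \cite[Lemma 5.16]{APS11}) that is faithful on maps between finite discrete sets, this sets up an anti-equivalence between $\mathcal{K}_{2^{\N}}$ and the category $\mathcal{F}$ of nonempty finite discrete sets with continuous surjections. I would then verify the three defining properties by passing to $\mathcal{F}$.

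For (JEP$_{\Cu}$), given objects corresponding to $F_1,F_2$, the product $F_1\times F_2$ with its coordinate projections supplies surjections $F_1\times F_2\to F_i$ (surjective since the factors are nonempty); dually this yields the required unital order-embeddings into the common object $\Lsc(F_1\times F_2,\overline{\N})$. For (NAP$_{\Cu}$), I would translate a span $\alpha_1\colon A\to B_1$, $\alpha_2\colon A\to B_2$ into a cospan of surjections $g_1\colon G_1\to E$, $g_2\colon G_2\to E$ and complete it with the fiber product $H:=G_1\times_E G_2=\{(x,y)\mid g_1(x)=g_2(y)\}$ and its two projections $h_i\colon H\to G_i$. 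Because $g_1$ and $g_2$ are onto $E$, every fibre of $g_1$ and of $g_2$ is nonempty, so both $h_i$ are surjective and hence are $\mathcal{F}$-morphisms, and $g_1\circ h_1=g_2\circ h_2$ holds by construction. Dualizing, the induced unital order-embeddings $\beta_1,\beta_2$ satisfy $\beta_1\circ\alpha_1=\beta_2\circ\alpha_2$; this is exact amalgamation, so in particular $\beta_1\circ\alpha_1\simeq_F\beta_2\circ\alpha_2$ for every finite $F$.

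Finally, for (SEP$_{\Cu}$) I would take as dominating subcategory $\mathfrak{s}$ the full subcategory of $\mathcal{K}_{2^{\N}}$ on a fixed set of representatives $\{\overline{\N}^r\}_{r\ge 1}$, i.e. these objects together with all $\mathcal{K}_{2^{\N}}$-morphisms between them. Every simplicial $\Cu$-semigroup is isomorphic to exactly one $\overline{\N}^r$, and between any two representatives there are only finitely many continuous surjections of the underlying finite sets, hence finitely many morphisms; thus $\mathfrak{s}$ has countably many objects and morphisms, each object is countably-based, and the last two conditions of (SEP$_{\Cu}$) follow at once: any isomorphism $A\to T$ onto a representative $T\in\mathfrak{s}$ is a $\mathcal{K}_{2^{\N}}$-morphism, and for $\sigma\colon S\to A$ with $S\in\mathfrak{s}$ one simply sets $\tau:=\alpha\circ\sigma\in\Hom_\mathfrak{s}(S,T)$, which even gives equality in place of $\simeq_F$.

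The bulk of the real content lies in pinning down the duality correctly — checking that $\Lsc(-,\overline{\N})$ is faithful on morphisms between finite discrete sets and that it reverses composition — so that the three conditions genuinely reduce to statements about $\mathcal{F}$. Once this is in place, I expect no further obstacle, since JEP and NAP then collapse to the elementary facts that nonempty finite sets admit products and pullbacks along surjections; this is precisely the (projective) amalgamation of finite discrete sets and surjections underlying the characterization of the Cantor set recalled above, and it is what makes $\mathcal{K}_{2^{\N}}$ Fra\"{i}ss\'{e}.
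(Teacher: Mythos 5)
Your proof is correct, and it reaches the same three properties by a genuinely different route. The paper works directly inside the semigroups: for (JEP$_{\Cu}$) it embeds $\overline{\N}^{r_1}$ and $\overline{\N}^{r_2}$ into the direct sum $\overline{\N}^{r_1}\oplus\overline{\N}^{r_2}$ via explicit formulas on the generators $\delta_i^r$ (dually, the disjoint union of the spectra with a collapse map, rather than your product with its projections), and for (NAP$_{\Cu}$) it first reduces via \autoref{prp:SurjDualityLsc} to maps of the form $\id\oplus\eta_i$ and then amalgamates with the explicit ``graph'' maps $\beta_1(x,y)=(x,y,\eta_2(x))$ and $\beta_2(x,y)=(x,\eta_1(x),y)$ into $\overline{\N}^r\oplus\overline{\N}^{t-r}\oplus\overline{\N}^{t-r}$. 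You instead dualize systematically to the category of nonempty finite discrete sets with surjections and invoke the categorical product and the fiber product; your amalgam $G_1\times_E G_2$ is generally larger than the paper's, but both constructions give \emph{exact} amalgamation, which is all that is needed. Your version makes the connection with the projective Fra\"{i}ss\'{e} presentation of the Cantor set transparent and transfers verbatim to similar dual situations, at the cost of having to justify the anti-equivalence carefully (fullness of $\Lsc(-,\overline{\N})$ on surjections via Propositions \ref{prp:DualityLsc} and \ref{prp:SurjDualityLsc}, and the elementary fact that $l\mapsto l\circ f$ is an order-embedding for $f$ onto); the paper's version avoids this bookkeeping by computing coordinatewise. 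Your treatment of (SEP$_{\Cu}$) via a skeleton $\{\overline{\N}^r\}_{r\ge 1}$ is in fact more careful than the paper's one-line remark that there are countably many objects and finitely many morphisms between any two.
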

\begin{proof}
Throughout the proof, we will denote the $r$-tuple $(0,\ldots,1,\ldots,0)$  with value $1$ at the $i$-th component and $0$ everywhere else by $\delta^r_i$. Note that $\{\delta_i^r\}_{i=1}^r$ generates $\overline{\N}^r$.

The category $\mathcal{K}_{2^{\N}}$ contains countably many objects and finitely many morphisms between two given objects. It follows that the category $\mathcal{K}_{2^{\N}}$ is separable.

Let $ \overline{\N}^{r_1}, \overline{\N}^{r_2}$ be simplicial $\Cu$-semigroups. We construct $\alpha_1\colon \overline{\N}^{r_1}\longrightarrow \overline{\N}^{r_1}\oplus\,\,\, \overline{\N}^{r_2}$ that sends $\delta_1^{r_1}\mapsto \delta_1^{r_1}\oplus 1_{\overline{\N}^{r_2}}$ and $\delta_i^{r_1}\mapsto \delta_i^{r_1}\oplus 0_{\overline{\N}^{r_2}}$ for any $2\leq i \leq r_1$. Similarly, we construct $\alpha_2\colon \overline{\N}^{r_2}\longrightarrow \overline{\N}^{r_1}\oplus\,\,\,  \overline{\N}^{r_2}$. It is readily checked that $\alpha_1,\alpha_2$ are $\mathcal{K}_{2^{\N}}$-morphisms and, hence, the joint embedding property follows.

Let $\alpha_1\colon \overline{\N}^{r}\longrightarrow \overline{\N}^{t_1}$ and $\alpha_2\colon \overline{\N}^{r}\longrightarrow \overline{\N}^{t_2}$ be $\mathcal{K}_{2^{\N}}$-morphisms. We know from \autoref{prp:SurjDualityLsc} that $r\leq t_1,t_2$. Further, we may assume that $t_1=t_2=t$ and, upon a possible reindexing (ie. composing with an isomorphism), we may also assume that the $\alpha_i$'s are of the form $\id\oplus \eta_i$ for some $\Cu$-morphisms $\eta_i\colon  \overline{\N}^{r}\longrightarrow \overline{\N}^{t-r}$. Note that, since both $\alpha_1$ and $\alpha_2$ map $1_r$ to $1_t$, the maps $\eta_i$ also map $1_r$ to $1_{t-r}$.

Let $\beta_i\colon \overline{\N}^r\oplus \overline{\N}^{t-r}\to \overline{\N}^r\oplus \overline{\N}^{t-r}\oplus \overline{\N}^{t-r}$ be the maps $\beta_1 (x,y)= (x,y,\eta_2(x))$ and $\beta_2 (x,y)=(x,\eta_1(x),y)$. Note that these order-embeddings map $1$ to $1$. By construction, we have $\beta_1\circ \alpha_1= \beta_2\circ \alpha_2$, which shows that the category $\mathcal{K}_{2^{\N}}$ has the amalgamation property.
\end{proof}

\begin{cor}\label{prp:Cantor}
The Fra\"{i}ss\'{e} limit of $\mathcal{K}_{2^{\N}}$ is $\Lsc (2^{\N},\overline{\N})$.
\end{cor}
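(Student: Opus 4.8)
The plan is to dualize the entire problem to the projective Fra\"{i}ss\'{e} theory of finite discrete sets, where the Cantor set is already characterized, and then transport the conclusion back through the functor $\Lsc(-,\overline{\N})$. First I would record the duality: every object of $\mathcal{K}_{2^\N}$ is a simplicial $\Cu$-semigroup $\overline{\N}^r\cong\Lsc(F,\overline{\N})$ for a finite discrete set $F$ of cardinality $r$, and by \autoref{prp:DualityLsc} together with \autoref{prp:SurjDualityLsc} the $\mathcal{K}_{2^\N}$-morphisms (order-embeddings sending $1$ to $1$) are precisely the maps $\Lsc(f,\overline{\N})$ induced by continuous surjections $f$ between finite discrete sets. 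Thus $\Lsc(-,\overline{\N})$ restricts to a contravariant isomorphism between $\mathcal{K}_{2^\N}$ and the category of finite discrete sets with continuous surjections. The one subtlety to check here is that finite-set comparison collapses to equality in this setting: since each generator $\delta_i^r$ of $\overline{\N}^r$ satisfies $\delta_i^r\ll\delta_i^r$, two $\mathcal{K}_{2^\N}$-morphisms $\gamma_1,\gamma_2\colon\overline{\N}^r\to\overline{\N}^t$ with $\gamma_1\simeq_F\gamma_2$ for a finite set $F\supseteq\{\delta_i^r\}_i$ agree on all generators, hence coincide.

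Next I would unwind the Fra\"{i}ss\'{e} sequence property under this duality. Fix a Fra\"{i}ss\'{e} sequence $(S_i,\sigma_{i,j})_i$ in $\mathcal{K}_{2^\N}$ and write $S_i=\Lsc(F_i,\overline{\N})$ and $\sigma_{i,j}=\Lsc(p_{j,i},\overline{\N})$ for surjections $p_{j,i}\colon F_j\to F_i$, so that $(F_i,p_{j,i})_i$ is an inverse system of finite discrete sets with surjective bonding maps. Using the collapse of $\simeq_F$ to equality from the previous step, condition (ii) of \autoref{dfn:cufraisseseqdfn} becomes exactly the following lifting property of the inverse system: for every surjection $a\colon G\to F_i$ from a finite discrete set $G$ there exist $j\ge i$ and a surjection $b\colon F_j\to G$ with $a\circ b=p_{j,i}$.

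Now set $C:=\varprojlim_i F_i$ with projections $\pi_i\colon C\to F_i$, and verify that $C$ satisfies the characterization of the Cantor set stated above. Given surjections $f\colon C\to F$ and $g\colon F'\to F$ with $F,F'$ finite discrete, continuity into a finite set lets me factor $f=\tilde f\circ\pi_i$ for some $i$ and some surjection $\tilde f\colon F_i\to F$. Forming the fibred product $G:=F_i\times_F F'$ with its coordinate projections $a\colon G\to F_i$ and $q\colon G\to F'$, surjectivity of $g$ makes $a$ surjective, so the lifting property yields $j\ge i$ and a surjection $b\colon F_j\to G$ with $a\circ b=p_{j,i}$. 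Then $h:=q\circ b\circ\pi_j\colon C\to F'$ is the desired surjection: it is onto because each factor is, and $g\circ h=\tilde f\circ a\circ b\circ\pi_j=\tilde f\circ p_{j,i}\circ\pi_j=\tilde f\circ\pi_i=f$. By the characterization of the Cantor set, $C\cong 2^\N$. I expect this fibred-product manipulation to be the main obstacle, in the sense that it is the point where the Fra\"{i}ss\'{e} condition and the purely topological characterization must be matched precisely.

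Finally I would transport the computation back. By \autoref{rmk:IndCSubCat} the $\overline{\cc}$-limit $S$ of $(S_i,\sigma_{i,j})_i$ coincides with the $\Cu$-limit $\varinjlim_i S_i$. Since $2^\N=\varprojlim_i F_i$ is zero-dimensional, every locally constant function $2^\N\to\overline{\N}$ factors through some $\pi_i$, and every element of $\Lsc(2^\N,\overline{\N})$ is the supremum of an increasing sequence of such functions; this standard fact identifies $\Lsc(2^\N,\overline{\N})$ with $\varinjlim_i\Lsc(F_i,\overline{\N})$ along the maps $\Lsc(\pi_i,\overline{\N})$ (equivalently, one may invoke that $C(F_i)=\C^{r_i}$, $C(2^\N)=\varinjlim_i C(F_i)$, and continuity of the functor $\Cu$ from \autoref{pgr:Cuntz}, noting $\Cu(C(p_{j,i}))=\Lsc(p_{j,i},\overline{\N})=\sigma_{i,j}$). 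Hence $S\cong\varinjlim_i\Lsc(F_i,\overline{\N})\cong\Lsc(2^\N,\overline{\N})$. By \autoref{prp:FraisseLimCu}(i) this identification is independent of the chosen Fra\"{i}ss\'{e} sequence, so $\Lsc(2^\N,\overline{\N})$ is the Fra\"{i}ss\'{e} limit of $\mathcal{K}_{2^\N}$.
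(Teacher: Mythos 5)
Your proof is correct and follows essentially the same route as the paper: identify $\mathcal{K}_{2^{\N}}$ contravariantly with finite discrete sets and surjections via \autoref{prp:DualityLsc} and \autoref{prp:SurjDualityLsc}, check that the inverse limit of the dualized Fra\"{i}ss\'{e} sequence satisfies the stated characterization of the Cantor set, and transport back through $\Lsc(-,\overline{\N})$ using the continuity statement from \cite{APS11}. The only difference is one of explicitness: where the paper appeals to \autoref{prp:FraisseLimCu} together with the ``exact amalgamation'' of $\mathcal{K}_{2^{\N}}$, you verify the Cantor-set characterization directly from the Fra\"{i}ss\'{e} sequence property via the fibred product, after correctly observing that finite-set comparison collapses to equality on simplicial $\Cu$-semigroups because the generators $\delta_i^r$ are compact.
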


\begin{proof}
First, note that we can identify any $\mathcal{K}_{2^{\N}}$-object $\overline{\N}^r$ with $\Lsc(X_r,\overline{\N})$, where $X_r$ is any finite discrete set of cardinality $r$. Now, let $\alpha\colon\Lsc(X_r,\overline{\N})\longrightarrow \Lsc(X_t,\overline{\N})$ be a $\mathcal{K}_{2^{\N}}$-morphism and consider the continuous surjective map $f_\alpha\colon X_t\longrightarrow X_r$ obtained from \autoref{prp:DualityLsc}. From the construction of $f_\alpha$, we deduce that $\alpha$ can be identified with $\Lsc(f_\alpha,\overline{\N})\colon\Lsc(X_r,\overline{\N})\longrightarrow \Lsc(X_t,\overline{\N})$ which sends $l\longmapsto l\circ f_\alpha$.

On the other hand, we know that the Fra\"{i}ss\'{e} limit is obtained from an inductive system in $\mathcal{K}_{2^{\N}}$ that we write $(\overline{\N}^{r_i},\alpha_i)_i$. By the above identifications, we can identify the system with $(\Lsc(X_{r_i},\overline{\N}),\Lsc(f_{\alpha_i},\overline{\N}))_i$. Now combining \autoref{prp:SurjDualityLsc} with \autoref{prp:FraisseLimCu} (and the fact that the category $\mathcal{K}_{2^{\N}}$ has \emph{exact} amalgamation property), we get that $\lim\limits_{\longleftarrow}(X_{r_i},f_{\alpha_i})$ is a zero-dimensional compactum satisfying the above characterization of the Cantor set. That is, $\lim\limits_{\longleftarrow}(X_{r_i},f_{\alpha_i})\cong 2^{\N}$ and the result follows from \cite[Proposition 5.18]{APS11}.
\end{proof}

\begin{prg}[\textbf{The Pseudo-arc}]
Proceeding as before, we recall a characterization of the pseudo-arc in the language from \cite{BK22}, althought this result had also been obtained in the past, e.g. in \cite{IS06,K13}. As shown in \cite[Theorem 4.38]{BK22}, the pseudo-arc $\mathbb{P}$ is the Fra\"{i}ss\'{e} limit of the category consisting of a single object being the unit interval and continuous surjections in the category of arc-like continua and continuous surjections. Using \cite[Theorem 4.15]{BK22}, one gets:
\end{prg}

\begin{thm}[Characterization of the Pseudo-arc]
An arc-like continuum $P$ is the pseudo-arc if and only if $P$ satisfies the following property: 

For any two continuous surjections $f\colon P\longrightarrow [0,1]$ and $g\colon [0,1]\longrightarrow [0,1]$ and any $\varepsilon>0$, there exists a continuous surjection $h\colon P\longrightarrow [0,1]$ such that $\Vert h\circ g -f\Vert < \varepsilon$.
\end{thm}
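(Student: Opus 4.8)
The plan is to obtain this statement as a direct specialization of the abstract theory of projective Fra\"{i}ss\'{e} limits from \cite{BK22}, exactly in the spirit of the Cantor set characterization above; the only real work lies in matching the concrete lifting property displayed here with the abstract genericity condition that characterizes a Fra\"{i}ss\'{e} limit, and in checking that the ``cofinality half'' of that characterization is automatic in the present one-object setting.

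First I would record the two inputs from \cite{BK22}. By \cite[Theorem 4.38]{BK22}, the pseudo-arc $\mathbb{P}$ is the Fra\"{i}ss\'{e} limit of the category $\mathcal{I}$ whose single object is the interval $[0,1]$ and whose morphisms are the continuous surjections $[0,1]\to[0,1]$, computed inside the ambient category of arc-like continua with continuous surjections, each hom-set being metrized by the sup-distance $\Vert\cdot\Vert$. By \cite[Theorem 4.15]{BK22}, an object $P$ of the ambient category is isomorphic to this Fra\"{i}ss\'{e} limit if and only if it is cofinal (it admits a continuous surjection onto each object of $\mathcal{I}$) and it enjoys the approximate projective homogeneity, or lifting, property with respect to the morphisms of $\mathcal{I}$.

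Next I would specialize both conditions. Cofinality is automatic for a nondegenerate arc-like continuum: by definition such a space is an inverse limit of arcs with surjective bonding maps, and hence surjects onto $[0,1]$, so the cofinality clause imposes nothing beyond the non-vacuity already guaranteed by the existence of the surjections $f$ appearing in the statement. Consequently the characterization collapses to the single requirement that, for every surjection $f\colon P\to[0,1]$, every self-surjection $g\colon[0,1]\to[0,1]$, and every $\varepsilon>0$, there be a surjection $h\colon P\to[0,1]$ lifting $f$ through $g$ up to $\varepsilon$, that is $\Vert g\circ h-f\Vert<\varepsilon$. This is precisely the displayed property. The forward implication then reads: $\mathbb{P}$, being the Fra\"{i}ss\'{e} limit, satisfies it; the converse reads: any arc-like continuum with this property meets the hypotheses of \cite[Theorem 4.15]{BK22} and is therefore isomorphic to the Fra\"{i}ss\'{e} limit, i.e. to $\mathbb{P}$.

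The step needing the most care is the faithful dictionary between the abstract genericity condition of \cite[Theorem 4.15]{BK22} and the concrete $\varepsilon$-statement here. One must confirm that the enrichment used there is precisely the sup-metric on continuous surjections, that ``morphism'' in the ambient category means ``continuous surjection'' (so that the lift $h$ is automatically a surjection and we are comparing epimorphisms), and that the \emph{approximate} rather than exact form of homogeneity is the correct one --- which is unavoidable here, since the interval carries a continuum of self-surjections and exact lifting cannot be expected. Once these identifications are in place there is no further topological construction to carry out: all the genuine inverse-limit work is already packaged inside \cite[Theorem 4.38]{BK22}.
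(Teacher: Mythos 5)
Your proposal matches the paper's own treatment: the paper derives this characterization exactly by combining \cite[Theorem 4.38]{BK22} (the pseudo-arc as the Fra\"{i}ss\'{e} limit of the one-object category of interval self-surjections) with the abstract characterization of Fra\"{i}ss\'{e} limits in \cite[Theorem 4.15]{BK22}, and your observation that cofinality is automatic for arc-like continua is the right reason the condition collapses to the single approximate lifting property. Note also that your composition $\Vert g\circ h-f\Vert<\varepsilon$ is the type-correct reading of the statement's $\Vert h\circ g-f\Vert$, so no discrepancy there.
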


The following two lemmas will be needed in our proofs. The first is known as the Mountain Climbing Lemma (see \cite{H52}), while the second is readily obtained by generalizing the arguments in \cite[Lemma 4.5]{V22b}.

\begin{lma}[Mountain Climbing Lemma]\label{prp:MountClim}
 Let $f_1,f_2\colon [0,1]\longrightarrow [0,1]$ be continuous, piecewise linear maps such that $f_1(0)=0=f_2(0)$ and $f_1(1)=1=f_2(1)$. Then, there exist surjective, continuous maps $g_1,g_2\colon [0,1]\longrightarrow [0,1]$ such that $f_1\circ g_1=f_2\circ g_2$.
\end{lma}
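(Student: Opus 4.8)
The plan is to study the \emph{fiber product} (configuration space) $E=\{(s,t)\in[0,1]^2 : f_1(s)=f_2(t)\}$, which is a compact subset of the square containing the two corners $(0,0)$ and $(1,1)$. A coordinated climb is exactly a path in $E$ from $(0,0)$ to $(1,1)$: if $\gamma=(g_1,g_2)\colon[0,1]\longrightarrow E$ is such a path, then $f_1\circ g_1=f_2\circ g_2$ holds by the defining equation of $E$, while $g_i(0)=0$ and $g_i(1)=1$ together with the intermediate value theorem force each $g_i$ to be continuous and surjective. Thus the whole lemma reduces to producing a path in $E$ joining the two corners.

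First I would treat the \emph{generic} case, after a preliminary small perturbation of $f_1,f_2$ (keeping them piecewise linear and fixing the endpoint values) arranged so that no linear piece is horizontal, the values $0$ and $1$ are attained only at the endpoints (that is, $f_i^{-1}(0)=\{0\}$ and $f_i^{-1}(1)=\{1\}$), and no breakpoint of $f_1$ has the same height as a breakpoint of $f_2$. Under these hypotheses a local analysis shows that $E$ is a compact topological $1$-manifold with boundary: every interior point of $E$ has exactly two local branches---a transverse intersection of two monotone linear pieces, a kink where one map has a monotone breakpoint, or a turning point where one map has a local extremum met by a regular value of the other---and the genericity assumption rules out any genuine branch point. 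Examining the four sides of the square then shows that the boundary of $E$ consists of exactly the two corners $(0,0)$ and $(1,1)$. Since a compact $1$-manifold with boundary is a finite disjoint union of circles and arcs, and its number of boundary points is even and equal to twice the number of arc components, the two corners must be the endpoints of a single arc. Parametrising this arc by $u\mapsto(g_1(u),g_2(u))$ yields the desired maps.

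The hard part will be removing the genericity assumption. I would do this by approximating the given $f_1,f_2$ uniformly by generic piecewise linear maps $f_1^{(n)},f_2^{(n)}$ (fixing the endpoint values), obtaining from the previous step connecting arcs $\Gamma_n\subseteq E_n:=\{(s,t): f_1^{(n)}(s)=f_2^{(n)}(t)\}$, and passing to a Hausdorff limit. By the Blaschke selection theorem a subsequence of the $\Gamma_n$ converges to a continuum $\Gamma\subseteq[0,1]^2$ that contains both corners and, by uniform convergence of the $f_i^{(n)}$, satisfies $\Gamma\subseteq E$. The delicate point is that such a limit continuum need not a priori be path-connected; here I would use that for piecewise linear $f_1,f_2$ (after a harmless perturbation removing any pair of horizontal pieces sitting at a common height) the set $E$ is a finite polyhedral set, hence locally connected, so that the connected component of $E$ containing $(0,0)$---which by the limit argument also contains $(1,1)$---is automatically path-connected by the Hahn--Mazurkiewicz theorem. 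This supplies the required path and completes the proof. Alternatively, one can bypass the analytic limit entirely and run the parity and crossing bookkeeping of Huneke's original combinatorial argument in \cite{H52} directly on the (possibly non-generic) pair $f_1,f_2$.
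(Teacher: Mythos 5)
The paper does not actually prove this lemma: it is imported verbatim from the literature with the citation \cite{H52}, so there is no internal proof to compare against. Your argument is a correct, self-contained proof along the standard ``fiber product'' lines, and both halves hold up. In the generic case your case analysis of the local branches of $E=\{(s,t):f_1(s)=f_2(t)\}$ is right: the three genericity conditions do exclude points where both coordinates are breakpoints, every interior point of $E$ is then a manifold point (locally the graph of $t=f_2^{-1}(f_1(s))$ or $s=f_1^{-1}(f_2(t))$), the conditions $f_i^{-1}(0)=\{0\}$ and $f_i^{-1}(1)=\{1\}$ ensure the only boundary points are the two corners, and the parity count forces a single arc joining them. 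The de-genericization step is also sound, and it is good that you route it through a Hausdorff limit of the arcs $\Gamma_n$ rather than a naive ``WLOG generic'' reduction, since solving the problem for perturbed $f_i$ does not solve it for the original ones. Two small remarks. First, the parenthetical ``harmless perturbation removing any pair of horizontal pieces sitting at a common height'' is both unnecessary and not actually harmless in the WLOG sense: it is unnecessary because $E$ is in any case a finite union of compact convex pieces (on each grid rectangle determined by the breakpoints, $f_1(s)-f_2(t)$ is affine, so $E$ meets it in a point, a segment, or the whole rectangle), and a finite union of compact convex sets is locally connected, so the Hahn--Mazurkiewicz step applies directly to the original $E$; your proof should simply drop that parenthetical. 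Second, the combinatorial argument you mention at the end is due to Huneke (1969), whereas \cite{H52} is Homma's 1952 paper; either reference suffices for the piecewise linear case, but they are not the same source.
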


\begin{lma}\label{prp:CompLsc}
  Let $X$ and $Y$ be second countable, compact, Hausdorff spaces. Let $f,g:Y\longrightarrow X$ be continuous surjective maps and consider their induced $\Cu$-morphisms $\Lsc(f,\overline{\N}),\Lsc(g,\overline{\N})$ given by $l\longmapsto l\circ f,l\circ g$ respectively. Then,
 \begin{itemize}
\item[(i)] For any finite subset $F$ of $\Lsc (X,\overline{\N})$, there exists $\varepsilon_F >0$ such that $\Lsc(f,\overline{\N})\simeq_F \Lsc(g,\overline{\N})$ whenever $\Vert f-g\Vert<\varepsilon$.

\item[(ii)] For any $\varepsilon >0$, there exists a finite subset $F_{\varepsilon}$ of $\Lsc (X,\overline{\N})$ such that $\Vert f-g\Vert <\varepsilon$ whenever $\Lsc(f,\overline{\N})\simeq_F \Lsc(f,\overline{\N})$.
\end{itemize}

Further, the $\varepsilon_F$ only depends on $F,X,Y$, and  $F_{\varepsilon}$ only depends on $\varepsilon ,X,Y$ (not on $f,g$).
 \end{lma}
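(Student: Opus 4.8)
The plan is to translate both assertions into pointwise inequalities between the functions $l\circ f$ and $l\circ g$ on $Y$ and then exploit the compactness of $X$. Throughout I fix a compatible metric $d$ on $X$, so that $\Vert f-g\Vert=\sup_{y\in Y}d(f(y),g(y))$. The key input will be the description of compact containment in $\Lsc(X,\overline{\N})$: for $l',l\in\Lsc(X,\overline{\N})$ one has $l'\ll l$ exactly when $l'$ is bounded and $\overline{\{l'\ge k\}}\subseteq\{l\ge k\}$ for every $k\ge 1$, where each super-level set $\{h\ge k\}$ is open by lower semicontinuity. (Boundedness of $l'$ follows by testing $l'\ll l$ against the truncations $l\wedge m\nearrow l$.) I will also use repeatedly that $\Lsc(f,\overline{\N})(l)=l\circ f$, so that $\Lsc(f,\overline{\N})(\mymathbb{1}_U)=\mymathbb{1}_{f^{-1}(U)}$ for open $U$, and that $\Lsc(f,\overline{\N})\simeq_F\Lsc(g,\overline{\N})$ means precisely that $l'\circ f\le l\circ g$ and $l'\circ g\le l\circ f$ hold for every pair $l'\ll l$ in $F$.

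For (i), I would fix $F$ and let $N$ be a common bound for all the elements of $F$ occurring as the smaller member of a $\ll$-pair. For each such pair $l'\ll l$ and each $k\le N$, the set $K:=\overline{\{l'\ge k\}}$ is compact and contained in the open set $O:=\{l\ge k\}$, so there is $\delta>0$ whose $\delta$-neighbourhood of $K$ still lies in $O$ (set $\delta=+\infty$ if $O=X$, and drop the pair if $K=\emptyset$). Let $\varepsilon_F$ be the minimum of these finitely many positive numbers. If $\Vert f-g\Vert<\varepsilon_F$ and $y\in Y$, then whenever $l'(f(y))\ge k$ one has $f(y)\in K$, so $d(f(y),g(y))<\varepsilon_F\le\delta$ forces $g(y)\in O$, i.e. $l(g(y))\ge k$; taking $k=l'(f(y))$ yields $l'\circ f\le l\circ g$, and the symmetric estimate gives $l'\circ g\le l\circ f$. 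Hence $\Lsc(f,\overline{\N})\simeq_F\Lsc(g,\overline{\N})$, and $\varepsilon_F$ is manufactured purely from $F$ and the metric of $X$.

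For (ii), I would fix $\varepsilon>0$, choose a finite $\varepsilon/4$-net $x_1,\dots,x_n$ of the compact space $X$, and set $U_i:=B(x_i,\varepsilon/4)$ and $V_i:=B(x_i,\varepsilon/2)$, so that $\overline{U_i}\subseteq V_i$ and therefore $\mymathbb{1}_{U_i}\ll\mymathbb{1}_{V_i}$. Put $F_\varepsilon:=\{\mymathbb{1}_{U_i},\mymathbb{1}_{V_i}\mid 1\le i\le n\}$. If $\Lsc(f,\overline{\N})\simeq_{F_\varepsilon}\Lsc(g,\overline{\N})$, then for each $i$ the inequality $\mymathbb{1}_{U_i}\circ f\le\mymathbb{1}_{V_i}\circ g$ reads $f^{-1}(U_i)\subseteq g^{-1}(V_i)$. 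Given $y\in Y$, choose $i$ with $f(y)\in U_i$ (possible since the $x_i$ form an $\varepsilon/4$-net); then $g(y)\in V_i$, whence $d(f(y),g(y))\le d(f(y),x_i)+d(x_i,g(y))<\varepsilon/4+\varepsilon/2<\varepsilon$. As $y$ was arbitrary, $\Vert f-g\Vert<\varepsilon$, and $F_\varepsilon$ depends only on $\varepsilon$ and the metric of $X$. Note that surjectivity of $f,g$ is not needed for either part.

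The only delicate point is the characterization of $\ll$ invoked in the first paragraph (the ``only if'' direction for (i) and the indicator case $\overline{U}\subseteq V\Rightarrow\mymathbb{1}_U\ll\mymathbb{1}_V$ for (ii)); everything after that is elementary compactness. I expect the write-up to be routine once this characterization is cited, the main care being to handle the degenerate cases in (i)—empty super-level sets, or $\{l\ge k\}=X$—which impose no constraint and are absorbed by allowing $\delta=+\infty$.
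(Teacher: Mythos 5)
Your argument is correct, and it is worth noting that the paper does not actually write out a proof of this lemma: it is stated as ``readily obtained by generalizing the arguments in \cite[Lemma~4.5]{V22b}'', where the interval case is handled via finite partitions of $[0,1]$. Your proof replaces that route by a direct compactness argument on an arbitrary compact metric space: a Lebesgue-number type estimate for the compact inclusions $\overline{\{l'\geq k\}}\subseteq\{l\geq k\}$ gives (i), and an $\varepsilon/4$-net of indicator pairs $\mymathbb{1}_{U_i}\ll\mymathbb{1}_{V_i}$ gives (ii); this is cleaner than transporting the interval argument and makes the uniformity explicit (your $\varepsilon_F$ and $F_\varepsilon$ depend only on $F$, resp.\ $\varepsilon$, and the chosen metric on $X$ --- not even on $Y$ --- and surjectivity of $f,g$ is indeed never used). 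The one external input, as you correctly flag, is the characterization of $\ll$ in $\Lsc(X,\overline{\N})$ ($l'\ll l$ iff $l'$ is bounded and $\overline{\{l'\geq k\}}\subseteq\{l\geq k\}$ for all $k$); both directions are needed (the ``only if'' direction for (i), the indicator case of the ``if'' direction for (ii)), and both are standard for compact metric $X$ and available in the references the paper already cites (\cite{APS11,V22}). Two cosmetic points for the write-up: in (i) handle the degenerate situation where every pair is dropped or every $\delta$ is $+\infty$ by simply setting $\varepsilon_F:=1$, and note that $\|f-g\|$ implicitly refers to a fixed compatible metric on $X$, which is consistent with the dependence of the constants on $X$ asserted in the statement.
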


\begin{prg} \emph{The category $\mathcal{K}_\mathbb{P}$} is the category whose (single) object is $\Lsc ([0,1],\overline{\N})$ and whose morphisms are order-embeddings such that $1\mapsto 1$.
\end{prg}

\begin{thm}\label{prp:BingFraisse}
 The category $\mathcal{K}_\mathbb{P}$ is Fra\"{i}ss\'{e}.
\end{thm}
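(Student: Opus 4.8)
The plan is to verify the three defining conditions of a (Cuntz) Fra\"{i}ss\'{e} category from \autoref{dfn:cufraissedfn}, following the blueprint of the Cantor set case (\autoref{prp:CantoerFraisse}) but with the Mountain Climbing Lemma (\autoref{prp:MountClim}) playing the role of the combinatorial amalgamation, and with \autoref{prp:CompLsc} used to pass between the uniform distance on maps and finite-set comparison of the induced $\Cu$-morphisms. Throughout I would use \autoref{prp:DualityLsc} and \autoref{prp:SurjDualityLsc} to identify the $\mathcal{K}_\mathbb{P}$-morphisms $\alpha\colon \Lsc([0,1],\overline{\N})\to \Lsc([0,1],\overline{\N})$ with the continuous surjections $f_\alpha\colon [0,1]\to [0,1]$; under this identification $\beta\circ\alpha$ corresponds to $f_\alpha\circ f_\beta$, surjectivity of $f_\alpha$ is equivalent to $\alpha$ being an order-embedding, and the normalization $\alpha(1)=1$ is automatic for any induced morphism. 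Since $\mathcal{K}_\mathbb{P}$ has a single object, (JEP$_{\Cu}$) is immediate from $\id$.

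For separability I would take as the countable dominating subcategory $\mathfrak{s}$ the morphisms $\Lsc(h,\overline{\N})$ with $h$ a piecewise-linear surjection of $[0,1]$ having rational breakpoints and values; these form a countable, composition-closed family containing the identity, and $\Lsc([0,1],\overline{\N})$ is countably-based. The only nontrivial clause of (SEP$_{\Cu}$) asks, given a morphism $\sigma=\Lsc(f,\overline{\N})$ and a finite set $F$, for a $\cc$-morphism $\alpha$ and an $\mathfrak{s}$-morphism $\tau$ with $\alpha\circ\sigma\simeq_F\tau$. Here I would take $\alpha=\id$ and choose, by a standard uniform approximation, a rational piecewise-linear surjection $h$ with $\Vert f-h\Vert<\varepsilon_F$, where $\varepsilon_F$ is the constant from \autoref{prp:CompLsc}(i); then $\sigma=\Lsc(f,\overline{\N})\simeq_F\Lsc(h,\overline{\N})=\tau$, as required.

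The heart of the proof is the near amalgamation property. Given morphisms $\alpha_1,\alpha_2$ with associated surjections $f_1,f_2$ and a finite set $F$, I want continuous surjections $g_1,g_2$ with $\Vert f_1\circ g_1-f_2\circ g_2\Vert<\varepsilon_F$, since \autoref{prp:CompLsc}(i) then yields $\Lsc(g_1,\overline{\N})\circ\alpha_1\simeq_F\Lsc(g_2,\overline{\N})\circ\alpha_2$, while $\beta_i:=\Lsc(g_i,\overline{\N})$ are order-embeddings fixing $1$ by \autoref{prp:SurjDualityLsc}. First I would approximate $f_1,f_2$ by piecewise-linear surjections $h_1,h_2$ with $\Vert f_i-h_i\Vert<\varepsilon_F/2$. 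The main obstacle is that \autoref{prp:MountClim} requires inputs satisfying $h_i(0)=0$ and $h_i(1)=1$, which a general surjection need not. I would remove this obstruction by a precomposition trick: choose $a_i\in h_i^{-1}(0)$, $b_i\in h_i^{-1}(1)$ and a piecewise-linear surjection $\phi_i\colon[0,1]\to[0,1]$ with $\phi_i(0)=a_i$, $\phi_i(1)=b_i$ (a zigzag sweeping across $[0,1]$), so that $H_i:=h_i\circ\phi_i$ is a piecewise-linear surjection with $H_i(0)=0$ and $H_i(1)=1$. Applying the Mountain Climbing Lemma to $H_1,H_2$ produces surjections $\psi_1,\psi_2$ with $H_1\circ\psi_1=H_2\circ\psi_2$; setting $g_i:=\phi_i\circ\psi_i$ gives surjections with $h_1\circ g_1=h_2\circ g_2$, whence $\Vert f_1\circ g_1-f_2\circ g_2\Vert\le\Vert f_1-h_1\Vert+\Vert f_2-h_2\Vert<\varepsilon_F$.

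Thus the only genuinely delicate point is arranging the endpoint normalization $H_i(0)=0$, $H_i(1)=1$ needed to invoke \autoref{prp:MountClim}; everything else is either formal (JEP$_{\Cu}$), a routine uniform approximation by (rational) piecewise-linear surjections, or a direct application of \autoref{prp:CompLsc} translating the resulting uniform estimates into the finite-set comparison $\simeq_F$. I would conclude that $\mathcal{K}_\mathbb{P}$ satisfies (JEP$_{\Cu}$), (NAP$_{\Cu}$) and (SEP$_{\Cu}$), and hence is Fra\"{i}ss\'{e}.
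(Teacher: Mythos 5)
Your proposal is correct and follows essentially the same route as the paper: identify morphisms with continuous surjections of $[0,1]$ via \autoref{prp:SurjDualityLsc}, get separability from rational piecewise-linear surjections together with \autoref{prp:CompLsc}, and prove near amalgamation by approximating with piecewise-linear maps and invoking the Mountain Climbing Lemma. The only (cosmetic) difference is that you normalize the endpoints of the piecewise-linear approximants by an explicit precomposition after approximating, whereas the paper normalizes $f_{\alpha_1},f_{\alpha_2}$ first "without loss of generality"; your version makes that reduction explicit but is otherwise the same argument.
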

\begin{proof}
  The category $\mathcal{K}_\mathbb{P}$ contains only one object, so the joint embedding property is trivial. 

 To see that  $\mathcal{K}_\mathbb{P}$ is separable, we consider the subcategory $\mathfrak{s}\subseteq \mathcal{K}_\mathbb{P}$ whose morphisms are $\mathcal{K}_\mathbb{P}$-morphisms of the form $\Lsc(h,\overline{\N})\colon \Lsc([0,1],\overline{\N})\longrightarrow \Lsc([0,1],\overline{\N})$ where $h\colon [0,1]\longrightarrow [0,1]$ is any piecewise linear, surjective map with rational, finitely many peaks and valleys. We know from \autoref{prp:SurjDualityLsc} that any $\mathcal{K}_\mathbb{P}$-morphism $\alpha$ is of the form $\Lsc(f_\alpha,\overline{\N})$ where $f_\alpha\colon [0,1]\longrightarrow [0,1]$ is a continuous surjective map. Moreover, it is well-known that any such continuous surjective map can be approximated in norm by a piecewise linear map $h$ with rational, finitely many peaks and valleys. Therefore, it follows from \autoref{prp:CompLsc} that $\mathfrak{s}$ is a countable, dominating subcategory.

Finally, let us prove $\mathcal{K}_\mathbb{P}$ satisfies the near amalgamation property. Let $\alpha_1,\alpha_2$ be $\Cu$-morphisms in $\mathcal{K}_\mathbb{P}$ and consider the continuous, surjective maps $f_{\alpha_1},f_{\alpha_2}\colon [0,1]\longrightarrow [0,1]$ obtained from \autoref{prp:SurjDualityLsc}. Also, let $F\subseteq \Lsc ([0,1],\overline{\N})$ be a finite subset and consider the bound $\varepsilon_F>0$ given by \autoref{prp:CompLsc}.

Without loss of generality, we can assume that $f_{\alpha_1}(0)=0=f_{\alpha_2}(0)$ and $f_{\alpha_1}(1)=1=f_{\alpha_2}(1)$. (If needed, we can precompose them with well-chosen continuous surjective maps.) As before, we can find piecewise linear maps $h_1,h_2\colon [0,1]\longrightarrow [0,1]$ with rational, finitely many peaks and valleys, at distance less than $\varepsilon /2$ from $f_{\alpha_1}$ and $f_{\alpha_2}$ respectively, and such that $h_1(0)=0=h_2(0)$ and $h_1(1)=1=h_2(1)$.

Applying the Mountain Climbing lemma (see \autoref{prp:MountClim}), we obtain surjective continuous maps $g_1,g_2 \colon [0,1]\longrightarrow [0,1]$ such that $h_1\circ g_1 = h_2\circ g_2$. By construction, we get that $f_{\alpha_1}\circ g_1$ and $f_{\alpha_2}\circ g_2$ are at distance at most $\varepsilon$. Let $\beta_1:=\Lsc(g_1,\overline{\N})$ and $\beta_2:=\Lsc(g_2,\overline{\N})$ be the $\mathcal{K}_\mathbb{P}$-morphisms induced by $g_1$ and $g_2$ respectively. By \autoref{prp:CompLsc}, we obtain $\beta_1\circ\alpha_1\simeq_F \beta_2\circ\alpha_2$, as desired.
\end{proof}

\begin{cor}\label{prp:PseudoArc}
The Fra\"{i}ss\'{e} limit of $\mathcal{K}_\mathbb{P}$ is $\Lsc (\mathbb{P},\overline{\N})$.
\end{cor}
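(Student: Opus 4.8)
The plan is to mirror the proof of \autoref{prp:Cantor}, replacing the exact amalgamation available in the Cantor case with the approximate amalgamation encoded in \autoref{prp:CompLsc} and in the Mountain Climbing Lemma (\autoref{prp:MountClim}). By \autoref{prp:FraisseLimCu} together with \autoref{prp:BingFraisse}, the Fra\"{i}ss\'{e} limit $S$ of $\mathcal{K}_\mathbb{P}$ is the $\overline{\mathcal{K}_\mathbb{P}}$-limit of some Fra\"{i}ss\'{e} sequence $(\Lsc([0,1],\overline{\N}),\sigma_{i,j})_i$. Writing $X_i:=[0,1]$ for the $i$-th stage, \autoref{prp:SurjDualityLsc} shows that each $\sigma_{i,j}$ is of the form $\Lsc(\phi_{i,j},\overline{\N})$ for a continuous surjection $\phi_{i,j}\colon X_j\longrightarrow X_i$, so the sequence dualizes to an inverse sequence of intervals with surjective bonding maps. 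Setting $P:=\varprojlim(X_i,\phi_{i,j})$, with projections $\pi_i\colon P\longrightarrow X_i$ satisfying $\phi_{i,j}\circ\pi_j=\pi_i$, the space $P$ is an arc-like continuum, and \cite[Proposition~5.18]{APS11} yields $S\cong\Lsc(P,\overline{\N})$. It therefore suffices to prove that $P$ satisfies the characterization of the pseudo-arc recalled above, so that $P\cong\mathbb{P}$.

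To verify that characterization, I would fix continuous surjections $f\colon P\longrightarrow [0,1]$ and $g\colon [0,1]\longrightarrow [0,1]$ and $\varepsilon>0$, and produce a continuous surjection $h\colon P\longrightarrow [0,1]$ with $\Vert g\circ h - f\Vert<\varepsilon$. Since $P$ is an inverse limit of compact metric spaces, every continuous map on $P$ factors approximately through a finite stage: there exist an index $i$ and a continuous map $f'\colon X_i\longrightarrow [0,1]$ with $\Vert f'\circ\pi_i - f\Vert<\varepsilon/2$. After precomposing $f'$ and $g$ with suitable surjections to match their endpoint values and approximating both by piecewise linear surjections (exactly as in the proof of \autoref{prp:BingFraisse}), I would apply the Mountain Climbing Lemma to obtain surjections $a\colon W\longrightarrow X_i$ and $c\colon W\longrightarrow [0,1]$ (with $W=[0,1]$) such that $f'\circ a = g\circ c$.

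Next I would feed $a$ into the Fra\"{i}ss\'{e} sequence property. Read through \autoref{prp:SurjDualityLsc} and the finite-set-to-norm translation of \autoref{prp:CompLsc}(ii), that property says precisely that, for any $\delta>0$ and any continuous surjection $a\colon W\longrightarrow X_i$, there are $j\geq i$ and a continuous surjection $b\colon X_j\longrightarrow W$ with $\Vert a\circ b - \phi_{i,j}\Vert<\delta$. Putting $k:=c\circ b\colon X_j\longrightarrow [0,1]$ and $h:=k\circ\pi_j\colon P\longrightarrow [0,1]$ (both surjective, as composites of surjections), I compute $g\circ k = g\circ c\circ b = f'\circ a\circ b$, which for $\delta$ small relative to the modulus of continuity of $f'$ lies within $\varepsilon/2$ of $f'\circ\phi_{i,j}$. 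Hence $g\circ h = g\circ k\circ\pi_j$ is within $\varepsilon/2$ of $f'\circ\phi_{i,j}\circ\pi_j = f'\circ\pi_i$, which is within $\varepsilon/2$ of $f$; thus $\Vert g\circ h - f\Vert<\varepsilon$, as required. This establishes $P\cong\mathbb{P}$ and therefore $S\cong\Lsc(\mathbb{P},\overline{\N})$.

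The main obstacle is the bookkeeping in this last step: one must juggle three layers of approximation, namely the factorization of $f$ through a finite stage, the piecewise linear perturbation with matched endpoints needed before invoking \autoref{prp:MountClim}, and the Fra\"{i}ss\'{e}-property bound, while passing back and forth between finite-set comparison of $\Cu$-morphisms and uniform distance of the underlying interval maps via \autoref{prp:CompLsc}. The delicate points are choosing $\delta$ small enough relative to the modulus of continuity of $f'$ and ensuring the output map is a genuine surjection; the latter is automatic here since $k=c\circ b$ and $\pi_j$ are surjective. Everything else is formal dualization of the construction in \autoref{prp:Cantor}.
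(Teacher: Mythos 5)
Your proposal is correct and follows essentially the same route as the paper: dualize the Fra\"{i}ss\'{e} sequence via \autoref{prp:SurjDualityLsc} to an inverse sequence of intervals with surjective bonding maps, verify that the inverse limit satisfies the stated characterization of the pseudo-arc, and conclude with \cite[Proposition 5.18]{APS11}. The paper's own proof is terser (it simply cites \autoref{prp:SurjDualityLsc} and \autoref{prp:FraisseLimCu} for the characterization step), whereas you spell out the approximate factorization, the Mountain Climbing step, and the translation of the Fra\"{i}ss\'{e} property into norm estimates via \autoref{prp:CompLsc}; this is exactly the argument the paper leaves implicit.
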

\begin{proof}
We know that the Fra\"{i}ss\'{e} limit is obtained from an inductive system in $\mathcal{K}_\mathbb{P}$ that we write $(\Lsc([0,1],\overline{\N}),\alpha_i)_i$. Now combining \autoref{prp:SurjDualityLsc} with \autoref{prp:FraisseLimCu}, we get that $\lim\limits_{\longleftarrow}([0,1],f_{\alpha_i})$ is an arc-like continuum satisfying the above characterization of the pseudo-arc. That is, $\lim\limits_{\longleftarrow}([0,1],f_{\alpha_i})\cong \mathbb{P}$ and the result follows from \cite[Proposition 5.18]{APS11}.
\end{proof}

\begin{rmk}\label{rmk:AltPrfCantPseud}
We have chosen to give a self-contained proofs of the last two examples, but they could have alternatively been obtained as a combination of \autoref{rmk:endingpartc}~(ii), \autoref{prp:SurjDualityLsc}, \cite{R13} and known results in Fra\"{i}ss\'{e} theory of $\CatCa$-algebras (for example, those in \cite{Vig22}). Indeed, since in both categories $\mathcal{K}_{2^{\N}}$ and $\mathcal{K}_\mathbb{P}$ the dimension of the underlying spaces is at most one, we know from \cite{R13} (see also \cite{C07} for the case of the interval) that all the objects are Cuntz semigroups of commutative $\CatCa$-algebras. Further, it follows from \autoref{prp:SurjDualityLsc} that the $\Cu$-morphisms in both categories are in correspondence with surjective continuous maps between the spaces, which in turn correspond to injective $^*$-homomorphisms between the commutative $\CatCa$-algebras.

In other words, this shows that both $\mathcal{K}_{2^{\N}}$ and $\mathcal{K}_\mathbb{P}$ can be written as $\Cu (\cc_{2^{\N}})$ and $\Cu (\cc_\mathbb{P})$ for well chosen subcategories of commutative $\CatCa$-algebras, which are well-known to be Fra\"{i}ss\'{e} (e.g. \cite[Theorem~3.4]{Vig22} for the case of the pseudo-arc). Using \autoref{rmk:endingpartc}~(ii), it follows that both $\mathcal{K}_{2^{\N}}$ and $\mathcal{K}_\mathbb{P}$ are Fra\"{i}ss\'{e}, and that their limit coincides with the Cuntz semigroup of the limit of $\cc_{2^{\N}}$ and $\cc_\mathbb{P}$.
\end{rmk}

\subsection{The Jiang-Su algebra}\label{subsec:E}

It is readily checked that all the results developed in this paper also hold for classes where the objects are pairs of the form $(S,\rho)$ with $\rho\colon S\to [0,\infty ]$ a generalized $\Cu$-morphism (also known as a \emph{functional}) and where the morphisms from a pair $(S,\rho)$ to $(T,\delta)$ are simply $\Cu$-morphisms $S\to T$ preserving the prescribed functional $\rho$. Namely, the statements still hold because a limit of functional-preserving $\Cu$-morphisms is still functional-preserving.

Using \autoref{thm:cstarcu} (and \cite{R12}) in conjunction with the fact that quasitraces on a $\CatCa$-algebra correspond to functionals on its Cuntz semigroup (\cite{ERS11}), one obtains the following result by using \cite{M17} (see also \cite{EFHKKM16}).

\begin{prg} \emph{The category $\mathcal{K}_Z$} is the category whose objects are pairs $(Z_{p,q},\rho )$ with $p,q$ coprime and $\rho$ a faithful functional, and whose morphisms are functional-preserving order-embeddings. Here, $Z_{p,q}$ denotes the Cuntz semigroup of the prime dimension-drop algebra $\mathcal{Z}_{p,q}$. Using computations in \cite{APS11}, recall that we have
\[
	\Cu (\mathcal{Z}_{p,q})\cong Z_{p,q}=\{ f\in\Lsc ([0,1],\overline{\N}) \mid \exists k_1,k_2 \text{ such that }f(0)=qk_1\text{ and }f(1)=pk_2 \}.
\]
\end{prg}

\begin{thm}
The class $\mathcal{K}_Z$ is Fra\"{i}ss\'{e} and its limit is $Z$, the Cuntz semigroup of the Jiang-Su algebra $\mathcal{Z}$.
\end{thm}

\section{\texorpdfstring{Metrics on ${\rm Hom}_{\Cu}$-sets}{Metrics on HomCu-sets}}\label{sec:CuMetrics}

In this last section we introduce a metric on any ${\rm Hom}_{\Cu}$-set building on the ideas of \cite{C22, CES11, RS10, V22b}. We also provide a number of examples and, in particular, we show that this notion generalizes the metrics introduced in \cite[Definition~5.1]{C22}. We also study the relation between our proposed metric and finite-set comparison. Further, we prove prove that, in general, comparing $\Cu$-morphisms via the metric is more restrictive than using finite-set comparison. As a consequence, when (re)formulating the notion of Cauchy sequences in terms of the metric, the limit we obtain might not behave as expected. 

Let us start by recalling an important fact about the category $\Cu$.
\begin{prg}[\textbf{A generator for $\Cu$}] 
 Let $\mathbb{G}$ be the submonoid of $\Lsc ([0,1], \overline{\N} )$ defined as
 \[
\mathbb{G} = 
  \{f\in \Lsc([0,1],\overline{\N})\mid 
  f(0)=0,\, f
  \text{ increasing}
  \}
  .
 \]
If follows from \cite[Section~5.2]{S18} that $\mathbb{G}$ is a $\Cu$-semigroup. Moreover, $\mathbb{G}$ is a generator for the category $\Cu$, in the sense that the functor $\Cu(\mathbb{G},-)\colon \Cu\longrightarrow \rm Set$ is faithful. 

Using terminology of \cite{C22}, we can view $\mathbb{G}$ as the sub-$\Cu$-semigroup of $\Lsc ([0,1], \overline{\N})$ chain-generated by $\Lambda_{\mathbb{G}}:=\{\mymathbb{1}_{(t,1]}\mid t\in (0,1]\}$. Therefore, the order and the compact-containment relation in $\mathbb{G}$ are completely determined by the ones in $\Lambda_{\mathbb{G}}$. (See \cite[Section 4]{C22} for more details.)
\end{prg}


\begin{prg}[\textbf{Thomsen semigroup of a $\Cu$-semigroup}]\label{prg:thmsencu}
Let $S$ be a $\Cu$-semigroup. We define the \emph{Thomsen semigroup of $S$}, in symbols $\Th(S)$, to be the set of $\Cu$-morphisms from the generator $\mathbb{G}$ to $S$. In other words, 
\[
\Th(S):=\rm Hom_{\Cu}(\mathbb{G},S).
\]
\end{prg}

This construction is inspired by the $\CatCa$-case, where the \emph{Thomsen semigroup of a $\CatCa$-algebra $A$}, in symbols $\mathcal{T}h(A)$, is the set of approximate unitary equivalence classes of $^*$-homomorphisms of the form $C_0((0,1])\longrightarrow A\otimes\mathcal{K}$. (See \cite{Th92}.) Here, note that $C_0((0,1])$ is a generator for the category $\CatCa$. (See e.g. \cite{S18}.)

Therefore, the construction above is the natural $\Cu$-analogue of the Thomsen semigroup for $\CatCa$-algebras. In fact, there exists a natural (monoid) map $\mathcal{T}h (A)\longrightarrow \Th (\Cu (A))$, defined in the following proposition.

\begin{prop}
Let $A$ be a $\CatCa$-algebra. The map $\iota:\mathcal{T}h (A)\longrightarrow \Th (\Cu (A))$ given by $[\varphi ]\mapsto  \Cu (\varphi)_{\mid\mathbb{G}}$ is a well-defined monoid morphism.

If $A$ is stable and has stable rank one, then $\iota$ is a bijection.
\end{prop}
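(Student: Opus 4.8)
The plan is to reduce both the source and the target of $\iota$ to the same combinatorial datum --- a decreasing \emph{interval} in $\Cu(A)$ --- and then to invoke the classification of positive elements up to approximate unitary equivalence in $C^*$-algebras of stable rank one.

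First I would set up the identification of $\mathbb{G}$ inside $\Cu(C_0((0,1]))$ that makes the formula for $\iota$ meaningful. Writing $\iota_0\in C_0((0,1])_+$ for the canonical generator $s\mapsto s$, one has $[(\iota_0-t)_+]=\mymathbb{1}_{(t,1]}$ in $\Cu(C_0((0,1]))$ for every $t\in(0,1]$, so that $\Lambda_{\mathbb{G}}$, and hence $\mathbb{G}$, is a sub-$\Cu$-semigroup of $\Cu(C_0((0,1]))$ and $\Cu(\varphi)_{\mid\mathbb{G}}$ makes sense. Well-definedness is then the standard fact that approximately unitarily equivalent $^*$-homomorphisms induce the same $\Cu$-morphism: if $\varphi(x)=\lim_n u_n\psi(x)u_n^*$, then $(\varphi(x)-\varepsilon)_+\precsim u_n\psi(x)u_n^*\sim\psi(x)$ for $n$ large and every $\varepsilon>0$, whence $\Cu(\varphi)=\Cu(\psi)$ and in particular they agree on $\mathbb{G}$. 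Additivity of $\iota$ follows by realizing the sum in $\mathcal{T}h(A)$ through two orthogonal copies (available since $A$ is stable) and using that $\Cu$ sends orthogonal sums to the addition of $\Cu(A)$; the zero class clearly maps to the zero morphism. This settles that $\iota$ is a well-defined monoid morphism.

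Next I would make both sides concrete as intervals. Since $\mathbb{G}$ is chain-generated by $\Lambda_{\mathbb{G}}=\{\mymathbb{1}_{(t,1]}\}_{t\in(0,1]}$ (see \cite{C22}), a $\Cu$-morphism $\alpha\colon\mathbb{G}\longrightarrow\Cu(A)$ is exactly the datum of the family $x_t:=\alpha(\mymathbb{1}_{(t,1]})$, subject to $x_t\ll x_s$ for $s<t$ and $x_t=\sup_{s>t}x_s$; thus $\Th(\Cu(A))=\Hom_{\Cu}(\mathbb{G},\Cu(A))$ is precisely the set of such intervals in $\Cu(A)$. On the other side, a $^*$-homomorphism $\varphi\colon C_0((0,1])\longrightarrow A\otimes\mathcal{K}\cong A$ is determined by the positive element $h:=\varphi(\iota_0)$ via functional calculus, and $\varphi$ and $\psi$ are approximately unitarily equivalent precisely when $h$ and $h':=\psi(\iota_0)$ are approximately unitarily equivalent as positive elements. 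Under these two identifications $\iota$ becomes the assignment $h\longmapsto\big([(h-t)_+]\big)_{t\in(0,1]}$, and one checks directly that this family is indeed an interval (cut-downs give a $\ll$-increasing, sup-dense family).

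It then remains to prove that $h\mapsto\big([(h-t)_+]\big)_t$ descends to a bijection from approximate-unitary-equivalence classes of positive elements of $A$ onto intervals of $\Cu(A)$, which is exactly where stable rank one (and stability) enter. Injectivity is the assertion that, in a stable rank one algebra, two positive elements with $[(h-t)_+]=[(h'-t)_+]$ for all $t>0$ are approximately unitarily equivalent; this is the classification of $^*$-homomorphisms out of $C_0((0,1])$ in this setting, and I would cite \cite{CE08} (see also \cite{R12}). Surjectivity is the realization statement that every interval of $\Cu(A)$ --- which is automatically bounded by $\alpha(\mymathbb{1}_{(0,1]})=\sup_t x_t\in\Cu(A)$ --- arises from an honest positive element of the stable algebra $A$. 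I expect this realization step to be the main obstacle: it is where one must construct a concrete $h$ with prescribed cut-down data, using stability to supply room and stable rank one to match the $\Cu$-data; this is again furnished by \cite{CE08,CES11,R12}. Combining the two parametrizations of the previous paragraph with this bijection shows that $\iota$ is a bijection, completing the proof.
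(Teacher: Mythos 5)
Your proposal is correct and follows essentially the same route as the paper: both parts reduce to the classification of $^*$-homomorphisms from $C_0((0,1])$ to stable rank one $\CatCa$-algebras from \cite{CE08,RS10,R12}, with well-definedness being the standard fact that approximately unitarily equivalent morphisms agree on $\Cu$. The only difference is packaging --- the paper extends $\tau\colon\mathbb{G}\longrightarrow\Cu(A)$ uniquely to a $\Cu$-morphism on $\Cu(C_0((0,1]))$ (using weak cancellation and (O5), available by stable rank one) and then lifts, whereas you pass to the equivalent data of intervals in $\Cu(A)$ and positive elements; the realization step you flag as the main obstacle is exactly the existence half of that classification, so nothing is missing.
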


\begin{proof}
It is a well-known fact that any two approximate unitary equivalent $^*$-homomorphisms agree at level of $\Cu$. Therefore, $\iota$ is a well-defined map for any $\CatCa$-algebra.

Now assume that $A$ has stable rank one. Then, it follows from \cite[Theorem~4.3,~Lemma~7.2]{RW10} that $\Cu(A)$ is weakly cancellative and satisfies (O5). 

Let $\tau\in\Th (\Cu (A))$. Proceeding as in the proof of \cite[Proposition~3.4]{V22b}, we can construct a $\Cu$-morphism $\tilde{\tau}\colon \Cu (C_0 ((0,1]))\longrightarrow \Cu (A)$ extending $\tau$. Further, such a morphism is unique.

The functor $\Cu$ classifies $^*$-homomorphisms from $C_0((0,1])$ to any $\CatCa$-algebra of stable rank one (See \cite{CE08, RS10, R12}). In particular, one can lift the extension $\tilde{\tau}$ to a $^*$-homomorphism $C_0((0,1])\to A$. This proves that $\iota$ is surjective.

Since the extension $\tilde{\tau}$ is unique, and the lift of any such $\tau$ is unique up to approximate unitary equivalence, $\iota$ is injective.
\end{proof}

We will use the Thomsen semigroup of $S$ to build a metric on any $\rm Hom_{\Cu}$-set $\Hom_{\Cu}(S,T)$. Let us start by equipping $\Th(S)$ with the following metric, modeled after the distance in \cite{CE08}. (See also \cite{RS10} and \autoref{prg:LscLike}.)

\begin{dfn}\label{dfn:DistG}
Let $S$ be a $\Cu$-semigroup, and let $\alpha,\beta\in \Th(S)$. We define 
\[
d_{\mathbb{G}}(\alpha,\beta):=
\inf \left\{
r>0\mid \forall t\in [0,1], \, \alpha\left(\mymathbb{1}_{(t+r,1]}\right)
\leq\beta\left(\mymathbb{1}_{(t,1]}\right) 
\text{ and }
\beta\left(\mymathbb{1}_{(t+r,1]}\right)
\leq \alpha\left(\mymathbb{1}_{(t,1]}\right)
\right\} .
\]
\end{dfn}

\begin{rmk}\label{rmk:DistGMetric}
Note that, by definition, $d_{\mathbb{G}}(\alpha,\beta)=0$ precisely when $\alpha(\mymathbb{1}_{(t,1]})=\beta(\mymathbb{1}_{(t,1]})$ for every $t$. Thus, since $\mathbb{G}$ is generated (as a $\Cu$-semigroup) by the elements $\mymathbb{1}_{(t,1]}$, one gets $\alpha = \beta$. Consequently, $d_{\mathbb{G}}$ is a metric. This is in contrast to \cite{CE08}, where weak cancellation of the Cuntz semigroup is needed to ensure positivity. 
\end{rmk}

\begin{prg}[\textbf{Sets with generating image}]
Let $\Lambda$ be a subset of $\Th (S)$. We say that $\Lambda$ has a \emph{generating image in $S$} if the submonoid generated by $\{ \tau (\mymathbb{1}_{(t,1]}) \mid t\in [0,1],\, \tau\in\Lambda \}$ is sup-dense in $S$. 

Equivalently, $\Lambda$ has a generating image if for any $s',s\in S$ with $s'\ll s$, there exist $\tau_1,\ldots ,\tau_n$ in $\Lambda$ and $t_1,\ldots , t_n\in [0,1]$ such that 
$s'\ll \tau_1 (\mymathbb{1}_{(t_1,1]}) +\ldots + \tau_n (\mymathbb{1}_{(t_n,1]}) \ll s$.
\end{prg}

For any $\Cu$-semigroup $S$, there always exists a family (and, in fact, many) with a generating image. For instance, the following result shows that $\Th(S)$ always has a generating image in $S$.

\begin{prop}[{\cite[Lemma~5.16]{S18}}]\label{prp:IniMap}
Let $S$ be a $\Cu$-semigroup and let $(x_n)_n$ be a $\ll$-increasing sequence in $S$. Then, there exists $\tau \in\Th (S)$ such that $\tau (\mymathbb{1}_{(\frac{1}{n},1]}) =x_n$.
\end{prop}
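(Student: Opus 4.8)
The plan is to reduce everything to the chain-generated description of $\mathbb{G}$ and then to build, by hand, the value of $\tau$ on the chain $\Lambda_{\mathbb{G}}$. Writing $\phi(t):=\tau(\mymathbb{1}_{(t,1]})$ for $t\in(0,1]$, recall that since $\mathbb{G}$ is chain-generated by $\Lambda_{\mathbb{G}}$ (see \cite{C22,S18}), giving a $\Cu$-morphism $\tau\colon\mathbb{G}\longrightarrow S$ amounts exactly to giving a map $\phi\colon(0,1]\longrightarrow S$ which is (A) order-reversing; (B) such that $\phi(t)\ll\phi(s)$ whenever $s<t$; and (C) such that $\phi(s)=\sup_{t>s}\phi(t)$ for every $s$. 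These conditions encode, respectively, that $\tau$ preserves the order, the $\ll$-relation, and the suprema $\mymathbb{1}_{(s,1]}=\sup_{t>s}\mymathbb{1}_{(t,1]}$ on generators. Thus it suffices to construct such a $\phi$ with $\phi(1/n)=x_n$ for all $n$.

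The first thing I would point out is that the naive recipes do not work, and this is the main obstacle. For instance the step function $\phi(t):=\sup\{x_n : 1/n>t\}$ satisfies (A) and (C) and takes the right values at the points $1/n$, but it is constant (equal to $x_n$) on each gap $(1/(n+1),1/n)$; condition (B) applied to two points of a single gap would then force $x_n\ll x_n$, which holds only if $x_n$ is a compact element. A general term of a $\ll$-increasing sequence need not be compact, so one is genuinely forced to interpolate new values inside each gap, and the crux of the proof is showing that there is always room to do so.

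The room is provided by (O2), through the following interpolation statement: if $x\ll y$ in $S$, then there exists $z$ with $x\ll z\ll y$. Indeed, writing $y=\sup_k y_k$ with $y_k\ll y_{k+1}$ by (O2), the relation $x\ll y$ yields $x\le y_{k_0}$ for some $k_0$, and $z:=y_{k_0+1}$ works. Applying this repeatedly between the consecutive terms $x_n\ll x_{n+1}$, I would produce for each $n$ a strictly $\ll$-increasing chain $(z^n_q)_{q\in\mathbb{Q}\cap(0,1)}$ with $x_n\ll z^n_q\ll x_{n+1}$, with $z^n_q\ll z^n_{q'}$ for $q<q'$, and with $\sup_q z^n_q=x_{n+1}$ (taking an (O2)-sequence for $x_{n+1}$ lying above $x_n$ as a spine and subdividing it densely). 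Fixing a countable dense set $Q\subseteq(0,1]$ containing every $1/n$, I would set $\phi(1/n):=x_n$ and use these chains to define $\phi$ on $Q$ so that $q\mapsto\phi(q)$ is strictly $\ll$-decreasing, i.e. $\phi(q')\ll\phi(q)$ whenever $q<q'$ in $Q$, and then extend to all of $(0,1]$ by the right-hand supremum $\phi(t):=\sup\{\phi(q):q\in Q,\ q>t\}$.

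It then remains to verify (A)--(C) for this $\phi$. Monotonicity (A) and continuity (C) are immediate from the defining supremum, using that the chains were arranged to be right-continuous at the anchors, so that $\sup\{\phi(q):q\in Q,\ q>1/n\}=x_n$; in particular $\phi(1/n)=x_n$ is consistent with the extension. For (B), given $s<t$ I would use density of $Q$ to choose $q_2<q_1$ in $Q\cap(s,t)$ and compute $\phi(t)\le\phi(q_1)\ll\phi(q_2)\le\phi(s)$, whence $\phi(t)\ll\phi(s)$. This yields the desired $\Cu$-morphism $\tau$ with $\tau(\mymathbb{1}_{(1/n,1]})=x_n$. (The boundary value $\phi(1)=\tau(\mymathbb{1}_{(1,1]})=\tau(0)=0$ just reflects the normalization $x_1=0$.) The only delicate step is the interpolation lemma together with checking that the dense chains glue correctly at the anchors $1/n$; everything else is routine verification.
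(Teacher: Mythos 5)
Your strategy coincides with the paper's. The paper's proof is a two-line argument: it invokes \cite[Proposition~2.10]{APT20} to produce a net $(y_t)_{t\in[0,1)}$ with $y_{1/n}=x_n$, with $y_t\ll y_r$ whenever $r<t$ and $\sup_{t>r}y_t=y_r$ (noting that this is obtained by iterated application of (O2)), and then defines $\tau(\mymathbb{1}_{(t,1]}):=y_t$ using the chain-generated structure of $\mathbb{G}$. Your reduction to conditions (A)--(C), your (O2)-interpolation lemma, and your observation that the naive step function fails (B) unless the $x_n$ are compact are all correct, and your construction of the dense chains is precisely the "iterated application of (O2)" the paper alludes to.

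There is, however, one incomplete step: you arrange and check the supremum condition (C) only \emph{at the anchors} $1/n$, but (C) must hold at every $s\in(0,1]$, since $\mymathbb{1}_{(s,1]}=\sup_{t>s}\mymathbb{1}_{(t,1]}$ for every $s$ and $\tau$ must preserve all of these suprema. At a non-anchor point $q_0\in Q$ lying in a gap, you assign $\phi(q_0):=z^n_{r_0}$ directly, while the values at nearby $q>q_0$ are the $z^n_r$ with $r<r_0$; repeated interpolation gives $z^n_r\ll z^n_{r_0}$ but gives no control on $\sup_{r<r_0}z^n_r$, which can be strictly smaller than $z^n_{r_0}$ (already in $S=[0,\infty]$ a densely interpolated $\ll$-chain can jump). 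In that case $\tau$ fails to preserve the supremum $\mymathbb{1}_{(q_0,1]}=\sup_{t>q_0}\mymathbb{1}_{(t,1]}$ and is not a $\Cu$-morphism. The repair is cheap and standard: keep the directly constructed values $\phi_0$ on $Q$ only as scaffolding, and define $\phi(t):=\sup\{\phi_0(q):q\in Q,\ q>t\}$ for \emph{all} $t\in(0,1]$. The anchor values are unchanged because your chains satisfy $\sup\{\phi_0(q):q>1/n\}=x_n$; condition (C) then holds everywhere by cofinality of $Q$; and condition (B) survives because for $s<q_2<q_1<t$ with $q_1,q_2\in Q$ one still has $\phi(t)\leq\phi_0(q_1)\ll\phi_0(q_2)\leq\phi(s)$. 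With this adjustment your argument is complete and is essentially the in-line proof of \cite[Proposition~2.10]{APT20} that the paper outsources.
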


\begin{proof}
We give a brief argument for the convenience of the reader. 

Using \cite[Proposition~2.10]{APT20}, there exists a net $(y_t)_{t\in [0,1)}$, with $y_{\frac{1}{n}}=x_n$, such that $y_t\ll y_r$ whenever $r<t$ and $\sup_{t>r} y_t = y_r$. (This is achieved by an iterated application of (O2).) We let $\tau\colon\mathbb{G}\longrightarrow S$ be the $\Cu$-morphism defined by $\tau (\mymathbb{1}_{(t,1]} ):=y_t$ for each $t\in [0,1)$.
\end{proof}

\begin{dfn}\label{dfn:distCu}
Let $S,T\in\Cu$ and let $\Lambda\subseteq \Th (S)$ be a subset with a generating image in $S$. For any $\alpha,\beta\in \Hom_{\Cu}(S,T)$, we define
\[
d_{\Lambda}(\alpha,\beta):=\sup_{\tau\in \Lambda}d_\mathbb{G}(\alpha\circ\tau,\beta\circ\tau).
\]
\end{dfn}

\begin{lma}\label{prp:dCuMetric}
 Let $S,T\in\Cu$ and let $\Lambda\subseteq \Th (S)$ with a generating image in $S$. Then $d_{\Lambda}(\alpha,\beta)$ is a metric on $\Hom_{\Cu}(S,T)$.
\end{lma}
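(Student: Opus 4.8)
The plan is to verify the four metric axioms for $d_{\Lambda}$, noting that three of them are inherited almost formally from the fact that $d_{\mathbb{G}}$ is a metric on $\Th(S)$ (\autoref{rmk:DistGMetric}), so that the only step requiring real work is the identity of indiscernibles. First I would record that $d_{\mathbb{G}}(\alpha\circ\tau,\beta\circ\tau)\leq 1$ for every $\tau$, since taking $r=1$ makes each $\mymathbb{1}_{(t+1,1]}$ equal to $0$; hence $d_{\Lambda}(\alpha,\beta)\in[0,1]$ is a well-defined real number. Symmetry of $d_{\Lambda}$ is immediate from the symmetry of $d_{\mathbb{G}}$ in its two arguments, and the triangle inequality follows pointwise: for each $\tau\in\Lambda$ one has $d_{\mathbb{G}}(\alpha\circ\tau,\gamma\circ\tau)\leq d_{\mathbb{G}}(\alpha\circ\tau,\beta\circ\tau)+d_{\mathbb{G}}(\beta\circ\tau,\gamma\circ\tau)\leq d_{\Lambda}(\alpha,\beta)+d_{\Lambda}(\beta,\gamma)$, and taking the supremum over $\tau$ gives $d_{\Lambda}(\alpha,\gamma)\leq d_{\Lambda}(\alpha,\beta)+d_{\Lambda}(\beta,\gamma)$.

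The substantive point is showing that $d_{\Lambda}(\alpha,\beta)=0$ forces $\alpha=\beta$ (the converse being trivial, since $\alpha=\beta$ makes every $d_{\mathbb{G}}(\alpha\circ\tau,\beta\circ\tau)$ vanish). If $d_{\Lambda}(\alpha,\beta)=0$, then $d_{\mathbb{G}}(\alpha\circ\tau,\beta\circ\tau)=0$ for every $\tau\in\Lambda$, so by \autoref{rmk:DistGMetric} we get $\alpha\circ\tau=\beta\circ\tau$ for each such $\tau$; in particular $\alpha$ and $\beta$ agree on every element of the form $\tau(\mymathbb{1}_{(t,1]})$ with $\tau\in\Lambda$ and $t\in[0,1]$. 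The remaining task is to promote this agreement on the generating set to agreement on all of $S$, and this is where the hypothesis that $\Lambda$ has a generating image enters.

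For the promotion step I would argue as follows. Fix $s\in S$ and choose, via (O2), a $\ll$-increasing sequence $(s_n)_n$ with $\sup_n s_n=s$. For each $n$ we have $s_n\ll s$, so by the generating-image characterization there exist $\tau_1,\dots,\tau_k\in\Lambda$ and $t_1,\dots,t_k\in[0,1]$ with $s_n\ll m\ll s$, where $m:=\tau_1(\mymathbb{1}_{(t_1,1]})+\cdots+\tau_k(\mymathbb{1}_{(t_k,1]})$. Since $\alpha$ and $\beta$ are additive and agree on each summand $\tau_i(\mymathbb{1}_{(t_i,1]})$, they agree on $m$, whence $\alpha(s_n)\leq\alpha(m)=\beta(m)\leq\beta(s)$ using that both maps preserve the order. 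Taking the supremum over $n$ and using that $\Cu$-morphisms preserve suprema of increasing sequences yields $\alpha(s)=\sup_n\alpha(s_n)\leq\beta(s)$; the symmetric argument gives $\beta(s)\leq\alpha(s)$, so $\alpha(s)=\beta(s)$. As $s$ was arbitrary, $\alpha=\beta$, completing the identity of indiscernibles and hence the proof that $d_{\Lambda}$ is a metric. The only genuine obstacle is this last density argument; everything else is bookkeeping inherited from $d_{\mathbb{G}}$, and the argument itself is routine once the generating-image hypothesis is invoked in the correct ($\ll$-interpolated) form.
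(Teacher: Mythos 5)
Your proposal is correct and follows essentially the same route as the paper's proof: reduce everything to the identity of indiscernibles, use that $d_{\mathbb{G}}$ is a metric (so $d_{\Lambda}(\alpha,\beta)=0$ gives $\alpha\circ\tau=\beta\circ\tau$ for all $\tau\in\Lambda$), and then exploit the generating-image hypothesis to interpolate between $\ll$-related elements and conclude via (O2). The only difference is cosmetic: you spell out the final sup-density step that the paper dismisses as "a standard argument using (O2)."
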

\begin{proof}
The symmetry, triangular inequality, and the fact that $d_{\Lambda}(\alpha,\beta)=0$ whenever $\alpha=\beta$ are all immediate. We are left to show that $\alpha=\beta$ whenever $d_{\Lambda}(\alpha,\beta)=0$.

Thus, assume that $d_{\Lambda}(\alpha,\beta)=0$. Let $s',s\in S$ be such that $s'\ll s$. Since $\Lambda$ has a generating image in $S$, there exist $\tau_1,\ldots ,\tau_n\in \Th (S)$ and $t_1,\ldots , t_n\in [0,1]$ such that $s'\leq \tau_1 (\mymathbb{1}_{(t_1,1]}) +\ldots + \tau_n (\mymathbb{1}_{(t_n,1]}) \leq s$. Note that $d_\mathbb{G}(\alpha\circ\tau_i,\beta\circ\tau_i)=0$ for any $i\leq n$. Therefore, we deduce that $\alpha\circ\tau_i = \beta\circ\tau_i$ for any $i\leq n$. Consequently, we have 
\[
\beta (s')\leq \beta (\tau_1 (\mymathbb{1}_{(t_1,1]})) +\ldots + \beta (\tau_1 (\mymathbb{1}_{(t_n,1]}))
= \alpha (\tau_1 (\mymathbb{1}_{(t_1,1]})) +\ldots + \alpha( \tau_1 (\mymathbb{1}_{(t_n,1]}))\leq 
\alpha (s).
\]
A symmetric argument gives us that $\alpha (s')\leq \beta (s)$. Finally, a standard argument using (O2) implies $\alpha=\beta$. (See \autoref{rmk:CharacEqFiSetComp}.)
\end{proof}

\begin{exa}\label{exa:distTriv}
Let $S,T$ be $\Cu$-semigroups. The metric $d_{\Th (S)}$ is trivial, in the sense that $d_{\Th (S)}(\alpha,\beta)=1$ if and only if $\alpha\neq \beta$. We give a brief argument below.
 
Let $\alpha ,\beta \in \Hom_{\Cu} (S,T)$ be such that $d_{\Th (S)}(\alpha ,\beta )<1$ and let $\varepsilon>0$ be such that $\varepsilon>d_{\Th (S)}(\alpha ,\beta)$. Now take any pair $x',x\in S$ such that $x'\ll x$. By \autoref{prp:IniMap}, we can find $\tau\in\Th (S)$ satisfying
 \[
  \tau (\mymathbb{1}_{(0,1]})=x\quad\text{and}\quad 
  \tau (\mymathbb{1}_{(\varepsilon ,1]})=x'.
 \]
Since $d_{\mathbb{G}}(\alpha\circ\tau ,\beta\circ\tau )<\varepsilon$, we compute
 \[
  \alpha (x') = 
  \alpha (\tau (\mymathbb{1}_{(\varepsilon ,1]}))\leq 
  \beta (\tau (\mymathbb{1}_{(0,1]})) = 
  \beta (x)\quad\text{and}\quad 
  \beta (x')\leq \alpha (x).
 \]
Since this can be done for any pair $x',x$, a standard argument using (O2) gives us again that $\alpha=\beta$.
\end{exa}

The example above illustrates that, despite always having many sets with a generating image, they will only induce meaningful metrics as long as they are not too large. The following examples show that all the (meaningful) $\Cu$-metrics considered in past for specific $\rm Hom_{\Cu}$-sets can be recovered as $d_\Lambda$ from well-chosen $\Lambda$'s with generating image.

\begin{exa}\label{exa:distW}
(i)  The Cuntz semigroup of the Jacelon-Razak algebra $\mathcal{W}$ can be identified with $[0,\infty ]$; see \cite{R13b}. Let $\tau\colon \mathbb{G}\longrightarrow [0,\infty ]$ be the $\Cu$-morphism determined by $\mymathbb{1}_{(t,1]}\mapsto 1-t$. 

The family $\Lambda = \{ \tau \}$ has a generating image in $\Cu(\mathcal{W})$. 
It can be computed that, for any $\alpha ,\beta \in\Hom_{\Cu} (\Cu (\mathcal{W}),T)$, we have
 \[
  d_\Lambda (\alpha ,\beta ) = d_{\mathbb{G}}(\alpha\circ\tau ,\beta\circ\tau )
 = \inf \{ r>0 \mid \forall t\in [0,1],\, \alpha (t-r) \leq \beta (t)\text{ and }\beta(t-r)\leq \alpha (t)\}.
 \]
 
 \noindent (ii) The Cuntz semigroup of the Jiang-Su algebra $\mathcal{Z}$ can be identified with $(0,\infty ]\sqcup\, \mathbb{N}$; see e.g \cite[Theorem 7.3]{GP22}. Similarly, let $\tau\colon \mathbb{G}\longrightarrow [0,\infty ]$ be the $\Cu$-morphism determined by $\mymathbb{1}_{(t,1]}\mapsto 1-t$ and let $c\colon \mymathbb{1}_{(t,1]}\mapsto 1_c$ be the constant map. 
 
 The family $\Lambda = \{ \tau, c \}$ has a generating image in $\Cu(\mathcal{Z})$. 
One can show that, for any $\alpha ,\beta \in\Hom_{\Cu} (\Cu (\mathcal{Z}),T)$, we have
 \[
  d_\Lambda (\alpha ,\beta ) =
  \left\{  
 \begin{array}{ll}
 1 \text{, whenever } \alpha(1_c)\neq \beta(1_c).\\
 \inf \{ r>0 \mid \forall t\in [0,1],\,\alpha (t-r) \leq \beta (t)\text{ and }\beta(t-r)\leq \alpha (t)\} \text{, otherwise}.
 \end{array}
 \right.
 \]
\end{exa}

\begin{exa}\label{exa:distCa}
 Let $A$ be a $\CatCa$-algebra and let $x\in\Cu (A)$. Fix a contraction $a_x\in (A\otimes\mathcal{K})_+$ such that $x=[a_x]$. This element gives rise to the $^*$-homomorphism $\varphi_x\colon C_0 ((0,1])\longrightarrow (A\otimes\mathcal{K})_+$ given by $\id_{C_0(0,1]}\mapsto a_x$. Denote the canonical inclusion from $\mathbb{G}$ to $\Cu (C_0 (0,1]))$ by $\iota$. 
 
  The family $\Lambda = \{ \Cu (\varphi_x)\circ\iota \mid x\in \Cu(A)\}$ has a generating image in $\Cu(A)$. (In fact, $\Lambda(\mathbb{G}) = \Cu (A)$). For any $\phi_1 ,\phi_2 \in\Hom_{\CatCa} (A,B)$, one gets
 \[
  d_\Lambda (\Cu (\phi_1),\Cu (\phi_2) ) \leq 
  d_{\CatCa} (\phi_1 ,\phi_2 ).
 \]
\end{exa}

\begin{exa}\label{exa:distG}
Let $X$ be a compact, metric space. Note that, for any $x\in X$, we have
\[
\bigcap\limits_{t\in (0,1]} B(x, t \diam(X))=\{ x\} \quad\text{and}\quad \bigcup\limits_{t\in (0,1]} B(x, t \diam(X))=X.
\]
Let $S:=\Lsc (X,\overline{\N})$ and let $x\in X$. We define $x_0:=0_S$ and $x_{t}:=\mymathbb{1}_{B(x, t\diam(X))}$ for any $t\in (0,1]$.

Now, from \autoref{prp:IniMap}, we know that there exists a $\Cu$-morphism $\tau_x\colon \mathbb{G}\longrightarrow S$ such that $\tau_x (\mymathbb{1}_{(t,1]})=x_{1-t}$ for any $t\in [0,1]$.

The family $\Lambda = \{ \tau_x \mid x\in X\}$ has a generating image in $S$. 
It can be computed that for any $\alpha ,\beta \in\Hom_{\Cu} (S,T)$ their distance $d_\Lambda$ is
\[
d_\Lambda (\alpha, \beta ) =  \inf \left\{
r>0\mid \forall V\in\mathcal{O}(X), \alpha(1_{V})\leq\beta(1_{V_{r}})  \text{ and }  \beta(1_{V})\leq\alpha(1_{V_{r}})
\right\}
\]
where $\mathcal{O}(X):=\{$Open sets of $X\}$ and $V_r$ is an $r$-open neighborhood of $V$. 
In particular, this generalizes the distance  recalled in \autoref{prg:LscLike} when $X$ is a compact, one-dimensional $\CW$-complex.
\end{exa}

\begin{exa}\label{exa:UniBasis} We refer the reader to \cite[Section 5.1]{C22} for details on finite uniform bases and induced $\Cu$-semimetrics.

Let $S$ be a uniformly-based $\Cu$-semigroup with a finite uniform basis $\Lambda_f=(\Lambda_n,\varepsilon_n)_n$. Let $n\in\N$. Recall that $\Lambda_n$ is finite and, in particular, that it has finitely many \emph{chains}, i.e. finite $\ll$-increasing sequences. Let us denote the set of chains in $\Lambda_n$ starting at $0_S$ by $\mathrm{C}_n$. 

Now let $c\in \mathrm{C}_n$ and let $l_c$ be the cardinal of $c$. From \autoref{prp:IniMap}, we know that there exists a $\Cu$-morphism $\tau_c\colon \mathbb{G}\longrightarrow S$ such that $\tau_c (\mymathbb{1}_{((l_c-k)/l_c,1]})=c(k)$ for any $0\leq k\leq l_c$.

The family $\Lambda=\bigcup\limits_{n\in\N}\{ \tau_c\mid c\in \mathrm{C}_n\}$ has a generating image in $S$. 
Following the ideas of \cite[Proposition 5.5]{C22}, one can show that the metric $d_{\Lambda}$ is topologically equivalent to the $\Cu$-semimetric $dd_{\Cu,\Lambda_f}$ induced by the finite uniform basis $\Lambda_f$.
\end{exa}

The following lemma states explicitly the relation between the metrics we have built and finite-set comparison for $\Cu$-morphisms.

 \begin{lma}\label{prp:DistFinComp}
Let $S,T\in\Cu$ and let $\Lambda\subseteq \Th (S)$ with a generating image in $S$. Then
  \begin{itemize}
   \item[(i)] For any finite set $F\subseteq S$, there exists $\varepsilon_F>0$ such that $\alpha\simeq_{F}\beta$ whenever $d_{\Lambda}(\alpha ,\beta )<\varepsilon_F$.
  \end{itemize}
If moreover $\Lambda$ is finite, then
\begin{itemize}
\item[(ii)] For any $\varepsilon>0$, there exists a finite set $F_\varepsilon\subseteq S$ such that $d_{\Lambda}(\alpha ,\beta )<\varepsilon$ whenever $\alpha\simeq_{F_\varepsilon}\beta$.
\end{itemize}
 \end{lma}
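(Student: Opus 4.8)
The plan is to reduce both claims to the behaviour of $d_{\mathbb{G}}$ on the ``building blocks'' $\tau(\mymathbb{1}_{(t,1]})$ and to exploit that, by hypothesis, finite sums of such blocks are sup-dense in $S$. For (i) I would fix a pair $x',x\in F$ with $x'\ll x$; since $F$ is finite there are only finitely many such pairs, so taking the minimum of the thresholds obtained for each pair will yield $\varepsilon_F$. Using the equivalent formulation of \emph{generating image} recalled above, I first choose $\tau_1,\dots,\tau_n\in\Lambda$ and $t_1,\dots,t_n\in[0,1)$ (those with $t_i=1$ contribute $0$ and may be discarded) such that $x'\ll w:=\sum_i\tau_i(\mymathbb{1}_{(t_i,1]})\ll x$.

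The key technical step in (i) is to create room in the parameters. Since $\tau_i(\mymathbb{1}_{(t_i,1]})=\sup_m\tau_i(\mymathbb{1}_{(t_i+1/m,1]})$ is an increasing supremum and, by (O4), addition commutes with such suprema, $w$ is the supremum of the increasing sequence $\big(\sum_i\tau_i(\mymathbb{1}_{(t_i+1/m,1]})\big)_m$. As $x'\ll w$, there is an $m$ with $x'\le u:=\sum_i\tau_i(\mymathbb{1}_{(s_i,1]})$, where $s_i:=t_i+1/m>t_i$ and $u\le w\le x$. Set $\varepsilon_{x',x}:=\min_i(s_i-t_i)>0$. Whenever $d_\Lambda(\alpha,\beta)<\varepsilon_{x',x}$, for each $i$ we have $d_{\mathbb{G}}(\alpha\circ\tau_i,\beta\circ\tau_i)<\varepsilon_{x',x}\le s_i-t_i$, and since the defining set of $d_{\mathbb{G}}$ is an up-set there is $r_i<s_i-t_i$ with $\alpha\tau_i(\mymathbb{1}_{(s_i,1]})\le\beta\tau_i(\mymathbb{1}_{(s_i-r_i,1]})$; as $s_i-r_i>t_i$ forces $\mymathbb{1}_{(s_i-r_i,1]}\le\mymathbb{1}_{(t_i,1]}$, the chain
\[ \alpha(x')\le\alpha(u)=\sum_i\alpha\tau_i(\mymathbb{1}_{(s_i,1]})\le\sum_i\beta\tau_i(\mymathbb{1}_{(t_i,1]})=\beta(w)\le\beta(x) \]
gives $\alpha(x')\le\beta(x)$, and the symmetric estimate gives $\beta(x')\le\alpha(x)$, so $\alpha\simeq_{\{x',x\}}\beta$.

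For (ii), finiteness of $\Lambda$ lets me discretize $[0,1]$: fix a partition $0=u_0<\dots<u_N=1$ of mesh $\delta<\varepsilon/2$ and put $F_\varepsilon:=\{\tau(\mymathbb{1}_{(u_k,1]})\mid\tau\in\Lambda,\,0\le k\le N\}$, a finite set. Since $\mymathbb{1}_{(u_{k+1},1]}\ll\mymathbb{1}_{(u_k,1]}$ in $\mathbb{G}$, we get $\tau(\mymathbb{1}_{(u_{k+1},1]})\ll\tau(\mymathbb{1}_{(u_k,1]})$, so $\alpha\simeq_{F_\varepsilon}\beta$ yields $\alpha\tau(\mymathbb{1}_{(u_{k+1},1]})\le\beta\tau(\mymathbb{1}_{(u_k,1]})$ and its symmetric counterpart for every $k$ and every $\tau\in\Lambda$. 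Given $t$ with $t+2\delta\le1$, choosing the least partition point $u_k\ge t$ (so $u_k\in[t,t+\delta)$ and hence $u_{k+1}<t+2\delta$, with $k<N$) and chaining gives $\alpha\tau(\mymathbb{1}_{(t+2\delta,1]})\le\alpha\tau(\mymathbb{1}_{(u_{k+1},1]})\le\beta\tau(\mymathbb{1}_{(u_k,1]})\le\beta\tau(\mymathbb{1}_{(t,1]})$, while for $t+2\delta>1$ the left-hand side is $0$. Thus $2\delta$ lies in the defining set of $d_{\mathbb{G}}(\alpha\circ\tau,\beta\circ\tau)$, so $d_{\mathbb{G}}(\alpha\circ\tau,\beta\circ\tau)\le2\delta<\varepsilon$ for all $\tau\in\Lambda$, and therefore $d_\Lambda(\alpha,\beta)\le2\delta<\varepsilon$.

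The only genuinely delicate point is that the ``closed'' indicators $\mymathbb{1}_{[t,1]}$ do not belong to $\mathbb{G}$, so one cannot naively let the gap shrink to $0$; the interpolation in (i)---replacing each $t_i$ by the strictly larger $s_i$ while retaining $x'\le u$---is exactly what manufactures a \emph{uniform positive} gap $\varepsilon_{x',x}$, and I expect checking that this gap is positive and can be chosen uniformly over the finitely many pairs of $F$ to be the crux of the argument. Everything else reduces to monotonicity of $t\mapsto\mymathbb{1}_{(t,1]}$, additivity and order-preservation of $\Cu$-morphisms, and the up-set structure of the set defining $d_{\mathbb{G}}$.
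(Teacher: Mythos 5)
Your proof is correct and follows essentially the same route as the paper: part (i) uses the generating-image decomposition of a pair $x'\ll x$ and manufactures a uniform positive gap in the parameters $t_i$ exactly as the paper does, and part (ii) is the same discretization of $[0,1]$ through the finitely many $\tau\in\Lambda$ (the paper outsources the final chaining estimate to an external lemma, whereas you carry it out directly, but the idea is identical).
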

\begin{proof}
(i) Note that it is enough to prove the result for  $F=\{ x',x\}$ where $x',x\in S$ are such that $x'\ll x$.
Thus, let $x',x\in S$ be such that $x'\ll x$. We know that there exist $n\in \N$, $\tau_1,\ldots ,\tau_n\in\Lambda$, and $t_1,\ldots ,t_n\in [0,1]$ such that $x'\leq \tau_1 (\mymathbb{1}_{(t_1,1]}) +\ldots + \tau_n (\mymathbb{1}_{(t_n,1]}) \ll x$.

For each $i$, let $r_i>0$ be such that $x'\ll \tau_1 (\mymathbb{1}_{(t_1+r_1,1]}) +\ldots + \tau_n (\mymathbb{1}_{(t_n+r_n,1]})$. Set $\varepsilon_F :=\min_i r_i$, and let $\alpha ,\beta$ be such that  $d_{\Lambda}(\alpha ,\beta )<\varepsilon_F$. We get
\[
 \alpha (x')\leq \alpha (\tau_1 (\mymathbb{1}_{(t_1+r_1,1]})) +\ldots+ \alpha (\tau_n (\mymathbb{1}_{(t_n+r_n,1]}))\leq 
\beta  (\tau_1 (\mymathbb{1}_{(t_1,1]})) +\ldots + \beta  ( \tau_n (\mymathbb{1}_{(t_n,1]}))\leq \beta (x)
\]
A symmetric argument gives $\beta (x')\leq \alpha (x)$, as required.

(ii) We now assume that $\Lambda$ is finite. Let $\varepsilon >0$ and let $t_1,\ldots ,t_n$ be a partition of $[0,1]$ such that $\vert t_i-t_{i+1}\vert < \varepsilon /2$. Let us define
\[
F_\varepsilon:=\{ \tau (\mymathbb{1}_{(t_i,1]}) \mid \tau\in\Lambda,\, i\leq n \}\cup\{  \tau (\mymathbb{1}_{(t_i+\varepsilon/2 ,1]}) \mid \tau\in\Lambda,\, i\leq n \}.
\]

Let $\alpha ,\beta$ be such that $\alpha\simeq_{F_\varepsilon}\beta$. By \cite[Lemma~4.8]{V22b}, we have $d_{\mathbb{G}}(\alpha\circ\tau ,\beta\circ\tau )<1/n + \varepsilon /2 \leq \varepsilon$ for every $\tau\in\Lambda$. This implies $d_\Lambda (\alpha ,\beta )<\varepsilon $, as desired.
\end{proof}

Let $S,T$ be $\Cu$-semigroups and let $\Lambda\subseteq \Th(S)$ be a set with a generating image in $S$. We will say that a sequence $(\alpha_i)_i$ in $\Hom_{\Cu}(S,T)$ is \emph{$d_\Lambda$-Cauchy} if $\sum_i d_\Lambda (\alpha_i, \alpha_{i+1})<\infty$. The proposition below shows that any $d_\Lambda$-Cauchy sequence has a unique limit $\alpha$, in the sense of \autoref{dfn:LimitLin}. Nevertheless, an extra-assumption is needed (e.g. $\Lambda$ is finite) in order for this limit $\alpha$ to satisfy $d_\Lambda (\alpha_i , \alpha)\rightarrow 0$.

\begin{prop}\label{prp:LimMorph}
Let $S,T$ be $\Cu$-semigroups and let $\Lambda\subseteq \Th(S)$ be a set with a generating image in $S$. Then any $d_\Lambda$-Cauchy sequence  $(\alpha_i)_i$ in $\Hom_{\Cu}(S,T)$ has a (unique) limit. 
\end{prop}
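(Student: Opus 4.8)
The plan is to reduce the statement to the already-established \autoref{thm:UniLimComp}, which guarantees that every Cauchy sequence in the sense of \autoref{dfn:CauchySeq} has a unique limit in the sense of \autoref{dfn:LimitLin}. The whole content of the proof will therefore be to verify that a $d_\Lambda$-Cauchy sequence is, in particular, a Cauchy sequence for finite-set comparison. The bridge between the two notions is \autoref{prp:DistFinComp}~(i): for each finite set $F\subseteq S$ there is an $\varepsilon_F>0$ with $\alpha\simeq_F\beta$ whenever $d_\Lambda(\alpha,\beta)<\varepsilon_F$.

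First I would observe that, since $d_\Lambda$ is a genuine metric by \autoref{prp:dCuMetric}, the triangle inequality applies. Hence for $j\leq k$ one has
\[
d_\Lambda(\alpha_j,\alpha_k)\leq \sum_{i=j}^{k-1} d_\Lambda(\alpha_i,\alpha_{i+1})\leq \sum_{i\geq j} d_\Lambda(\alpha_i,\alpha_{i+1}).
\]
The $d_\Lambda$-Cauchy hypothesis $\sum_i d_\Lambda(\alpha_i,\alpha_{i+1})<\infty$ says precisely that the right-hand side is the tail of a convergent series, so it tends to $0$ as $j\to\infty$. Consequently, for every $\varepsilon>0$ there is an index $N$ such that $d_\Lambda(\alpha_j,\alpha_k)<\varepsilon$ for all $j,k\geq N$; that is, $(\alpha_i)_i$ is Cauchy in the metric space $(\Hom_{\Cu}(S,T),d_\Lambda)$.

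Next I would translate metric-Cauchiness into finite-set comparison. Fix a finite subset $F\subseteq S$ and let $\varepsilon_F>0$ be the constant provided by \autoref{prp:DistFinComp}~(i). Applying the previous paragraph with $\varepsilon=\varepsilon_F$ yields an index $i_F$ such that $d_\Lambda(\alpha_j,\alpha_k)<\varepsilon_F$ for all $j,k\geq i_F$, and hence $\alpha_j\simeq_F\alpha_k$ for all such $j,k$. Since $F$ was arbitrary, this is exactly the defining condition of \autoref{dfn:CauchySeq}, so $(\alpha_i)_i$ is a Cauchy sequence in our sense. \autoref{thm:UniLimComp} then furnishes a unique $\Cu$-morphism $\alpha\colon S\longrightarrow T$ towards which $(\alpha_i)_i$ converges, which is the asserted (unique) limit.

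The only point requiring care — and the reason the argument is routed through the metric rather than attempted directly — is that $\simeq_F$ is not a transitive relation, so one cannot chain individual comparisons $\alpha_i\simeq_F\alpha_{i+1}$ to conclude $\alpha_j\simeq_F\alpha_k$ for distant indices. Working first inside the metric $d_\Lambda$, where the triangle inequality is available, sidesteps this obstacle entirely: the summability hypothesis controls $d_\Lambda(\alpha_j,\alpha_k)$ globally, and only a single application of \autoref{prp:DistFinComp}~(i) is needed to pass to finite-set comparison. No extra assumption (such as finiteness of $\Lambda$) is required for mere existence and uniqueness of the limit; as noted before the statement, finiteness of $\Lambda$ would only be needed to additionally guarantee $d_\Lambda(\alpha_i,\alpha)\to 0$.
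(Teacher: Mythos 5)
Your proof is correct and follows essentially the same route as the paper: both reduce the statement to \autoref{thm:UniLimComp} by using \autoref{prp:DistFinComp}~(i) to convert $d_\Lambda$-Cauchiness into Cauchiness for finite-set comparison. Your argument merely spells out the intermediate step (summability of the increments plus the triangle inequality gives metric-Cauchiness) that the paper leaves implicit.
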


\begin{proof}
Let $F\subseteq S$ be a finite set and let $\varepsilon_F$ be the bound given by \autoref{prp:DistFinComp}~(i). Since $(\alpha_i)_i$ is $d_\Lambda$-Cauchy, there exists some $i_F\in\N$ such that $d_\Lambda (\alpha_j ,\alpha_k)<\varepsilon_F$ whenever $j,k\geq i_F$. It follows from \autoref{prp:DistFinComp}~(i) that $(\alpha_i)_i$ is Cauchy in the sense of \autoref{dfn:CauchySeq}. 

Using \autoref{thm:UniLimComp}, one gets that  $(\alpha_i)_i$ has a (unique) limit.
\end{proof}

\begin{prop}\label{rmk:LambdaFin}
Let $S,T\in\Cu$ and let $\Lambda\subseteq \Th (S)$ with a generating image in $S$. Let $(\alpha_i)_i$ be a $d_\Lambda$-Cauchy sequence in $\Hom_{\Cu}(S,T)$ and let $\alpha\in\Hom_{\Cu}(S,T)$. Then the following are equivalent:
\begin{itemize}
\item[(i)] $\alpha$ is the limit of a sequence $(\alpha_i)_i$.
\item[(ii)] $d_{\mathbb{G}}(\alpha_i \circ\tau, \alpha\circ\tau)\rightarrow 0$ for any $\tau\in\Lambda$. 
\end{itemize}
If moreover $\Lambda$ is finite, then (i)-(ii) are in turn equivalent to
\begin{itemize}
\item[(iii)]$d_{\Lambda}(\alpha, \alpha_i)\rightarrow 0$.
\end{itemize}
\end{prop}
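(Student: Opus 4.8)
The plan is to prove the equivalence (i) $\Leftrightarrow$ (ii) first, with the implication (ii) $\Rightarrow$ (i) carrying the bulk of the work, and then, under the extra hypothesis that $\Lambda$ is finite, to deduce (ii) $\Leftrightarrow$ (iii), where everything collapses to the observation that $d_\Lambda$ is a finite maximum of the quantities $d_{\mathbb{G}}(\,\cdot\circ\tau,\,\cdot\circ\tau)$.

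For (i) $\Rightarrow$ (ii) I would fix $\tau\in\Lambda$ and $\varepsilon>0$ and note that the argument in the proof of \autoref{prp:DistFinComp}(ii) never uses the generating-image hypothesis and applies verbatim to the \emph{single} morphism $\tau$: it produces a finite set $F_{\tau,\varepsilon}\subseteq S$ such that $\gamma\simeq_{F_{\tau,\varepsilon}}\delta$ forces $d_{\mathbb{G}}(\gamma\circ\tau,\delta\circ\tau)<\varepsilon$. Since (i) supplies an index $i_0$ with $\alpha_k\simeq_{F_{\tau,\varepsilon}}\alpha$ for all $k\geq i_0$, we obtain $d_{\mathbb{G}}(\alpha_k\circ\tau,\alpha\circ\tau)<\varepsilon$ for $k\geq i_0$, which is exactly (ii).

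The main step is (ii) $\Rightarrow$ (i), and this is where the generating image is essential. By the remark following \autoref{dfn:LimitLin}, it suffices to show that for every pair $x'\ll x$ in $S$ there is an index beyond which $\alpha_k(x')\leq\alpha(x)$ and $\alpha(x')\leq\alpha_k(x)$. I would first use that $\Lambda$ has generating image to pick $\tau_1,\ldots,\tau_n\in\Lambda$ and $t_1,\ldots,t_n\in[0,1]$ with $x'\ll\sum_j\tau_j(\mymathbb{1}_{(t_j,1]})\ll x$. The crucial (and most delicate) point is a \emph{uniform push-in}: since $\mymathbb{1}_{(t_j,1]}=\sup_{r>0}\mymathbb{1}_{(t_j+r,1]}$ is an increasing supremum in $\mathbb{G}$ and each $\tau_j$ preserves suprema, the sum $\sum_j\tau_j(\mymathbb{1}_{(t_j,1]})$ is the increasing supremum over $r>0$ of $\sum_j\tau_j(\mymathbb{1}_{(t_j+r,1]})$; as $x'$ is compactly below this supremum, a \emph{single} $r_0>0$ satisfies $x'\leq\sum_j\tau_j(\mymathbb{1}_{(t_j+r_0,1]})$. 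Now (ii) yields $N$ with $d_{\mathbb{G}}(\alpha_k\circ\tau_j,\alpha\circ\tau_j)<r_0$ for all $k\geq N$ and all $j\leq n$ (a maximum over finitely many $j$). Evaluating the defining inequality of $d_{\mathbb{G}}$ at $t=t_j$ and summing over $j$ then gives, for $k\geq N$,
\[
\alpha_k(x')\leq\sum_j\alpha_k\bigl(\tau_j(\mymathbb{1}_{(t_j+r_0,1]})\bigr)\leq\sum_j\alpha\bigl(\tau_j(\mymathbb{1}_{(t_j,1]})\bigr)=\alpha\Bigl(\sum_j\tau_j(\mymathbb{1}_{(t_j,1]})\Bigr)\leq\alpha(x),
\]
and the symmetric inequality $\alpha(x')\leq\alpha_k(x)$ follows identically from the other half of the $d_{\mathbb{G}}$-estimate. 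This is precisely the convergence criterion, so (i) holds.

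Finally, assume $\Lambda$ finite. The implication (iii) $\Rightarrow$ (ii) is immediate (and needs no finiteness) from $d_{\mathbb{G}}(\alpha_k\circ\tau,\alpha\circ\tau)\leq d_\Lambda(\alpha_k,\alpha)$ for every $\tau\in\Lambda$. For (ii) $\Rightarrow$ (iii), writing $\Lambda=\{\tau_1,\ldots,\tau_m\}$ we have $d_\Lambda(\alpha_k,\alpha)=\max_{j\leq m}d_{\mathbb{G}}(\alpha_k\circ\tau_j,\alpha\circ\tau_j)$, and each of the finitely many terms tends to $0$ by (ii), hence so does the maximum. I expect the only genuinely delicate point to be the uniform push-in by a single $r_0$ in (ii) $\Rightarrow$ (i): this is exactly where the compact-containment structure of $\mathbb{G}$ together with the generating-image hypothesis is used, and where one must verify that the finitely many suprema (one per $\tau_j$) can be handled simultaneously.
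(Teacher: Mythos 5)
Your argument is correct. The paper states this proposition without proof, but your route is exactly the one its surrounding machinery suggests: (i)$\Rightarrow$(ii) by specializing the construction of $F_\varepsilon$ in \autoref{prp:DistFinComp}(ii) to a single $\tau$ (where the generating-image hypothesis is indeed never used), (ii)$\Rightarrow$(i) by the same push-in argument as in the proof of \autoref{prp:DistFinComp}(i) --- your single $r_0$ obtained from the joint supremum plays the role of the paper's $\min_i r_i$ --- and the finite-$\Lambda$ equivalence with (iii) because the supremum defining $d_\Lambda$ becomes a finite maximum.
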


\begin{rmk}
Condition (iii) in the proposition above is stronger than (ii), in the sense that (iii) implies (ii). Let us exhibit an example where the converse does not hold. 

For any $n\in\N$ we let $\tau_n\in \Th (\mathbb{G})$ be the map given by $\mymathbb{1}_{(t,1]}\mapsto \mymathbb{1}_{(t+1/n,1]}$. It can be checked that $\id_{\mathbb{G}}$ is the limit of the sequence $(\tau_n)_n$.

Consider the piecewise linear functions $f_n \colon [0,1]\rightarrow [0,1]$ mapping $0\mapsto 0$, $1/2\mapsto 1/n$, and $1\mapsto 1$. Let $\lambda_n\in\Th (S)$ be the $\Cu$-morphisms defined by $\lambda_n (\mymathbb{1}_{(t,1]}):=\mymathbb{1}_{(f_n(t),1]}$. 

The family $\Lambda = \{\lambda_n \}_n$ clearly has a  generating image in $S$, and one gets $d_\mathbb{G} (\tau_n \circ \lambda_n, \lambda_n)=1/2$ for each $n$. This shows that $d_\Lambda (\tau_n,\id)$ is constantly $1/2$. In particular, the distance does not tend to $0$.
\end{rmk}

\begin{qst}
 Let $S$ be a $\Cu$-semigroup. When does there exist $\Lambda\subseteq \Th (S)$ such that $\sum_i d_\Lambda (\alpha_i, \alpha_{i+1})<\infty$ implies $d_\Lambda (\alpha ,\alpha_i)\rightarrow 0$?
\end{qst}


\end{document}